\tikzset{math3d/.style=
    {x= {(-0.353cm,-0.353cm)}, z={(0cm,1cm)},y={(1cm,0cm)}}}
\tikzset{JLL3d/.style=
    {x= {(0.4cm,-0.2cm)}, z={(0cm,1cm)},y={(-1cm,0cm)}}}
\definecolor{Chocolat}{rgb}{0.36, 0.2, 0.09}
\definecolor{BleuTresFonce}{rgb}{0.215, 0.215, 0.36}
\newtheorem{proposition}{Proposition}
\newtheorem{theorem}{Theorem}
\newtheorem*{problem}{Problem}
\newtheorem{corollary}{Corollary}
\newtheorem{definition}{Definition}
\newtheorem{remark}{\sc Remark}
\newtheorem{lemma}{Lemma}
\newtheorem{example}{\sc Example}
\newcommand{\RR}{\mathbb{R}}
\newcommand{\ZZ}{\mathbb{Z}}
\newcommand{\ZP}{\mathbb{Z}_{>0}}
\newcommand{\K}{{K}}
\newcommand{\La}{\mathcal{L}}
\newcommand{\Tam}[1]{\mathrm{PBT}_{#1}}
\newcommand{\PT}[1]{\mathrm{PT}_{#1}}
\newcommand{\sC}{\mathcal{C}}
\newcommand{\PolySub}{\mathsf{Poly}}
\renewcommand{\Top}{\mathsf{Top}}
\newcommand{\CW}{\mathsf{CW}}
\newcommand{\sF}{\mathcal{F}}
\DeclareMathOperator{\tp}{top}
\DeclareMathOperator{\bm}{bot}
\DeclareMathOperator{\pr}{pr}
\DeclareMathOperator{\conv}{conv}
\DeclareMathOperator{\diam}{diam}
\newcommand{\ang}[1]{\langle #1\rangle}
\newcommand{\No}{\mathcal{N}}
\newcommand{\Ve}{\mathcal{V}}
\newcommand{\Bool}[1]{\{0, 1\}^{#1}}
\newcommand{\sk}{\mathrm{sk}}
\newcommand{\tr}{\mathrm{tr}}
\newcommand{\id}{\mathrm{id}}
\DeclareMathOperator{\im}{\mathrm{Im}}
\newcommand{\abs}[1]{\lvert #1\rvert}
\DeclareMathOperator{\codim}{codim}
\newcommand{\TreeLL}
{\vcenter{\hbox{
\begin{tikzpicture}[yscale=0.2,xscale=0.2]
\draw[thick] (0,-1)--(0,0) -- (-3,3);
\draw[thick] (-1,1)--(0,2) ;
\draw[thick] (-2,2)--(-1,3) ;
\draw[thick] (0,0)--(1,1) ;
\draw [fill] (0,-1) circle [radius=0.035];
\draw [fill] (-2,2) circle [radius=0.035];
\draw [fill] (0,2) circle [radius=0.035];
\draw [fill] (1,1) circle [radius=0.035];
\draw [fill] (-3,3) circle [radius=0.035];
\draw [fill] (-1,3) circle [radius=0.035];
\end{tikzpicture}}}}
\newcommand{\TreeLR}
{\vcenter{\hbox{
\begin{tikzpicture}[yscale=0.2,xscale=0.2]
\draw[thick] (0,-1)--(0,0) -- (-2,2);
\draw[thick] (-1,1)--(1,3) ;
\draw[thick] (0,0)--(1,1) ;
\draw[thick] (0,2)--(-1,3) ;
\draw [fill] (0,-1) circle [radius=0.035];
\draw [fill] (-2,2) circle [radius=0.035];
\draw [fill] (0,2) circle [radius=0.035];
\draw [fill] (1,1) circle [radius=0.035];
\draw [fill] (-1,3) circle [radius=0.035];
\draw [fill] (1,3) circle [radius=0.035];
\end{tikzpicture}}}}
\newcommand{\TreeRR}
{\vcenter{\hbox{
\begin{tikzpicture}[yscale=0.2,xscale=0.2]
\draw[thick] (0,-1)--(0,0) -- (3,3);
\draw[thick] (1,1)--(0,2) ;
\draw[thick] (2,2)--(1,3) ;
\draw[thick] (0,0)--(-1,1) ;
\draw [fill] (0,-1) circle [radius=0.035];
\draw [fill] (2,2) circle [radius=0.035];
\draw [fill] (0,2) circle [radius=0.035];
\draw [fill] (-1,1) circle [radius=0.035];
\draw [fill] (3,3) circle [radius=0.035];
\draw [fill] (1,3) circle [radius=0.035];
\end{tikzpicture}}}}
\newcommand{\TreeRL}
{\vcenter{\hbox{
\begin{tikzpicture}[yscale=0.2,xscale=0.2]
\draw[thick] (0,-1)--(0,0) -- (2,2);
\draw[thick] (1,1)--(-1,3) ;
\draw[thick] (0,0)--(-1,1) ;
\draw[thick] (0,2)--(1,3) ;
\draw [fill] (0,-1) circle [radius=0.035];
\draw [fill] (2,2) circle [radius=0.035];
\draw [fill] (0,2) circle [radius=0.035];
\draw [fill] (-1,1) circle [radius=0.035];
\draw [fill] (1,3) circle [radius=0.035];
\draw [fill] (-1,3) circle [radius=0.035];
\end{tikzpicture}}}}
\newcommand{\TreeCC}
{\vcenter{\hbox{
\begin{tikzpicture}[yscale=0.2,xscale=0.2]
\draw[thick] (0,-1)--(0,0) -- (-2.5,2.5);
\draw[thick] (-1.5,1.5)--(-0.5,2.5) ;
\draw[thick] (1.5,1.5)--(0.5,2.5) ;
\draw[thick] (0,0)--(2.5,2.5) ;
\draw [fill] (0,-1) circle [radius=0.035];
\draw [fill] (-2.5,2.5) circle [radius=0.035];
\draw [fill] (-0.5,2.5) circle [radius=0.035];
\draw [fill] (0.5,2.5) circle [radius=0.035];
\draw [fill] (2.5,2.5) circle [radius=0.035];
\end{tikzpicture}}}}
\newcommand{\TreeAL}
{\vcenter{\hbox{
\begin{tikzpicture}[yscale=0.2,xscale=0.2]
\draw[thick] (0,-1)--(0,0) -- (-2,2);
\draw[thick] (-1,1)--(0,2) ;
\draw[thick] (0,0)--(1,1) ;
\draw[thick] (-1,1)--(-1,2) ;
\draw [fill] (0,-1) circle [radius=0.035];
\draw [fill] (-2,2) circle [radius=0.035];
\draw [fill] (0,2) circle [radius=0.035];
\draw [fill] (1,1) circle [radius=0.035];
\draw [fill] (-1,2) circle [radius=0.035];
\end{tikzpicture}}}}
\newcommand{\TreeRA}
{\vcenter{\hbox{
\begin{tikzpicture}[yscale=0.2,xscale=0.2]
\draw[thick] (0,-1)--(0,0) -- (2,2);
\draw[thick] (1,1)--(0,2) ;
\draw[thick] (0,0)--(-1,1) ;
\draw[thick] (1,1)--(1,2) ;
\draw [fill] (0,-1) circle [radius=0.035];
\draw [fill] (2,2) circle [radius=0.035];
\draw [fill] (0,2) circle [radius=0.035];
\draw [fill] (-1,1) circle [radius=0.035];
\draw [fill] (1,2) circle [radius=0.035];
\end{tikzpicture}}}}
\newcommand{\TreeAR}
{\vcenter{\hbox{
\begin{tikzpicture}[yscale=0.2,xscale=0.2]
\draw[thick] (0,-1)--(0,0) -- (2,2);
\draw[thick] (1,1)--(0,2) ;
\draw[thick] (0,0)--(-1,1) ;
\draw[thick] (0,0)--(0,1) ;
\draw [fill] (0,-1) circle [radius=0.035];
\draw [fill] (2,2) circle [radius=0.035];
\draw [fill] (0,2) circle [radius=0.035];
\draw [fill] (-1,1) circle [radius=0.035];
\draw [fill] (0,1) circle [radius=0.035];
\end{tikzpicture}}}}
\newcommand{\TreeLA}
{\vcenter{\hbox{
\begin{tikzpicture}[yscale=0.2,xscale=0.2]
\draw[thick] (0,-1)--(0,0) -- (-2,2);
\draw[thick] (-1,1)--(0,2) ;
\draw[thick] (0,0)--(1,1) ;
\draw[thick] (0,0)--(0,1) ;
\draw [fill] (0,-1) circle [radius=0.035];
\draw [fill] (-2,2) circle [radius=0.035];
\draw [fill] (0,2) circle [radius=0.035];
\draw [fill] (1,1) circle [radius=0.035];
\draw [fill] (0,1) circle [radius=0.035];
\end{tikzpicture}}}}
\newcommand{\TreeCA}
{\vcenter{\hbox{
\begin{tikzpicture}[yscale=0.2,xscale=0.2]
\draw[thick] (0,-1)--(0,0) -- (-1,1);
\draw[thick] (0,1.5)--(1,2.5) ;
\draw[thick] (0,1.5)--(-1,2.5) ;
\draw[thick] (0,0)--(1,1) ;
\draw[thick] (0,0)--(0,1.5) ;
\draw [fill] (0,-1) circle [radius=0.035];
\draw [fill] (-1,1) circle [radius=0.035];
\draw [fill] (1,2.5) circle [radius=0.035];
\draw [fill] (-1,2.5) circle [radius=0.035];
\draw [fill] (1,1) circle [radius=0.035];
\draw [fill] (0,1) circle [radius=0.035];
\end{tikzpicture}}}}
\newcommand{\TreeC}
{\vcenter{\hbox{
\begin{tikzpicture}[yscale=0.2,xscale=0.2]
\draw[thick] (0,-1.5)--(0,0);
\draw[thick] (0,0)--(1.5,1.5) ;
\draw[thick] (0,0)--(0.5,1.5) ;
\draw[thick] (0,0)--(-0.5,1.5) ;
\draw[thick] (0,0)--(-1.5,1.5) ;
\draw [fill] (0,-1.5) circle [radius=0.035];
\draw [fill] (1.5,1.5) circle [radius=0.035];
\draw [fill] (0.5,1.5) circle [radius=0.035];
\draw [fill] (-1.5,1.5) circle [radius=0.035];
\draw [fill] (-0.5,1.5) circle [radius=0.035];
\end{tikzpicture}}}}
\title{The diagonal of the associahedra}
\author{Naruki Masuda}
\address{Johns Hopkins University, 
Department of Mathematics, 
3400 N. Charles Street,  
Baltimore, MD 21218, USA.}
\email{nmasuda2@jhu.edu}
\author{Hugh Thomas}
\address{D\'epartement de math\'ematiques,
  Universit\'e du Qu\'ebec \`a Montr\'eal,
Local PK-5151, 
201, Avenue du Pr\'esident-Kennedy, 
Montr\'eal, Canada.}
\email{thomas.hugh\_r@uqam.ca}
\author{Andy Tonks}
\address{Department of Mathematics, 
University of Leicester,  
University Road,  
Leicester LE1 7RH, 
United Kingdom.}
\email{apt12@leicester.ac.uk}
\author{Bruno Vallette}
\address{Laboratoire Analyse, G\'eom\'etrie et Applications, Universit\'e Paris Nord 13, Sorbonne Paris Cit\'e, CNRS, UMR 7539, 93430 Villetaneuse, France.}
\email{vallette@math.univ-paris13.fr}
\date{\today}
\subjclass[2010]{Primary 52B11; Secondary 18D50, 06A07}
\keywords{Associahedra, approximation of the diagonal, operads, fiber polytopes, $\mathrm{A}_\infty$-algebras.}
\thanks{N.M. and B.V. were supported by the Institut Universitaire de France and by the grant ANR-14-CE25-0008-01 project SAT. H.T. was supported by an NSERC Discovery Grant and the Canada Research Chairs program. A.T was partially supported by the grant MTM2016-76453-C2-2-P (AEI/FEDER, UE)}
\begin{document}

\begin{abstract}
This paper introduces a new method to solve the problem of the approximation of the diagonal for face-coherent families of  polytopes. We recover the classical cases of the simplices and the cubes and we solve it for the associahedra, also known as Stasheff polytopes. We show that it satisfies an easy-to-state cellular formula. For the first time, we endow a family of realizations of the associahedra (the Loday realizations) with a topological and cellular operad structure; it is shown to be compatible with the diagonal maps. 
\end{abstract}

\maketitle

\setcounter{tocdepth}{1}
\tableofcontents

\section*{Introduction}

The present paper has three goals: to introduce new machinery to solve the problem of the approximation of the diagonal of face-coherent families of  polytopes (\cref{Sec:CanDia}), to give a complete proof for the case of the associahedra (\cref{thm:MainOperad}) and, last but not least, to popularize the resulting \emph{magical formula} (\cref{thm:MagicFormula}) to facilitate its application in other domains.

\bigskip

The problem of the approximation of the diagonal of the associahedra lies at the crossroads of three clusters of domains. 
There are first the mathematicians who will apply it in their work: 
to compute the homology of fibered spaces  in algebraic topology \cite{Brown59, Proute11}, to construct tensor products of string field theories 
\cite{GaberdielZwiebach97}, or to consider the product of Fukaya $\mathcal{A}_\infty$-categories  in symplectic geometry \cite{Seidel08}. Second, there is the  community of operad theory and homotopical algebra, where the analogous result is known in the differential graded context \cite{SaneblidzeUmble04, MarklShnider06} and expected on the topological level. Third, there are combinatorists and discrete geometers who can appreciate our result conceptually as a new development in the theory of fiber polytopes of Billera--Sturmfels \cite{BilleraSturmfels92}.

\bigskip

The possible ways of iterating a binary product can be encoded, for example, by planar binary trees. Interpreting the associativity relation as an order relation, Dov Tamari introduced in his thesis \cite{Tamari51} a lattice structure on the set of planar binary trees with $n$ leaves, now known as the \emph{Tamari lattice}. These lattices can be realized by polytopes, called the \emph{associahedra}, in the sense that their 1-skeleton is the Hasse diagram of the Tamari lattice. We refer the reader to \cite{CFZ02, GKZ08, Loday04a} for examples and the introduction of \cite{CSZ15} for a comprehensive survey.

\bigskip

For loop spaces, composition fails to be strictly associative due to the different parametrizations, but this failure can be controlled by an infinite sequence of higher homotopies. This was made precise by James D. Stasheff in his thesis \cite{Stasheff63}. He introduced a family of  curvilinear polytopes, called the \emph{Stasheff polytopes}, whose combinatorics coincides with the associahedra. Endowing them with a suitable operad structure, that is, an algebraic way to compose operations of various arities, allowed him to establish a now famous recognition theorem for loop spaces. In Stasheff's theory, what is important is to have 
a family of contractible CW-complexes, endowed with an operad structure, whose face lattice is isomorphic to the lattice of planar trees. 

\bigskip

Stasheff's thesis was a profound breakthrough which  opened the door to the study of homotopical algebra by means of operad-like objects.
It prompted, for instance, the seminal monograph of Boardman--Vogt \cite{BoardmanVogt73} on the homotopy properties of algebraic structures, and the recognition of iterated loop spaces \cite{May72}. In this direction, Peter May  introduced the little disks operads, which play a key role in many domains nowadays. In dimension 1, this gives the `little intervals' operad, a finite dimensional topological operad satisfying Stasheff's theory. Its operad structure is given by scaling a configuration of intervals in order to  insert it into another interval.  

\bigskip

Thus two communities, one working on operad and homotopy theories, the other on combinatorics and discrete geometry, \emph{seem} to share a common object. Until now, however, no operad structure on any family of convex polytopal realizations of the associahedra has appeared in the literature. One was proposed in \cite[Part~II Section~1.6]{MSS} but does not hold as faces cannot be scaled in the same way as little intervals, and  in \cite{AguiarArdila17} the problem was solved up to a notion of `(quasi)-normal equivalence'.

\bigskip

In general, the set-theoretic diagonal of a polytope will fail to be cellular. Therefore, there is  a need to find a \emph{cellular approximation to the diagonal}, that is,  a cellular map homotopic to the diagonal. For a face-coherent family of polytopes, that is to say, a family where each face of each polytope in the family is combinatorially a product of lower-dimensional polytopes from the family, finding a family of diagonals compatible with the combinatorics of faces 
 is a very constrained problem. 
In the case of the first face-coherent family of polytopes, the geometric simplices, such a diagonal map is given by the classical \emph{Alexander--Whitney map} of \cite{EilenbergZilber53, EilenbergMacLane54}. This seminal object in algebraic topology allows one to define the associative cup product on the singular cochains of a topological space. 
(The lack of commutativity of the cup product gives rise to the celebrated Steenrod squares \cite{Steenrod47}.) The next family is given by cubes, for which a coassociative approximation to the diagonal is straightforward, see Jean-Pierre Serre's thesis \cite{Serre51}. 
The associahedra form the face-coherent family of polytopes that comes next in terms of further truncations of the simplices or of combinatorial complexity. 
For this family there was, until now, no known approximation to the diagonal. 
While a face of a simplex or a cube is a simplex or a cube of lower dimension, a face of an associahedron is a \emph{product} of associahedra of lower dimensions; this makes the problem of the approximation of the diagonal more intricate. 

\medskip

\[\begin{array}{|c|c|c|c|c|}
\cline{2-4}
\multicolumn{1}{r|}{}  & \rule{0pt}{10pt} \textrm{Dimension}\ 1 &\textrm{Dimension}\ 2 & \textrm{Dimension}\ 3  \\
\cline{1-4}
\textsc{Simplices}
&
{\vcenter{\hbox{
\begin{tikzpicture}[yscale=0.5,xscale=0.5]
\draw[thick] (-1,0)--(1,0); 
\end{tikzpicture}}}}
 & 
 {\vcenter{\hbox{
\begin{tikzpicture}[scale=0.7]
\draw[thick] (0,1) -- (1,-0.66) -- (-1,-0.66) --(0,1)--cycle;
\draw[fill, opacity=0.12] (0,1) -- (1,-0.66) -- (-1,-0.66) --(0,1);
\end{tikzpicture}}}}
 & 
 \vcenter{\hbox{
\begin{tikzpicture}[math3d, scale=1.4]
\draw[thick, opacity=0.2] (1,0,0) -- (0,1,0); 

\draw[thick]  (1.2,0.8,1) --(0,1,0)-- (1.5,1,0)--(1,0,0)--cycle ;
\draw[thick]  (1.5,1,0) --(1.2,0.8,1) ;

\draw[fill, opacity=0.18] (1,0,0) -- (1.2,0.8,1) -- (1.5,1,0) --cycle ;
\draw[fill, opacity=0.06] (1.2,0.8,1) -- (1.5,1,0) -- (0,1,0) -- (1.2,0.8,1) ;

\draw (1.9,1,0) ;
\draw (0,0,0.65);
\end{tikzpicture}}} 
\\
\cline{1-4}
 %%%%%%%% CUBES 
\textsc{Cubes} &
{\vcenter{\hbox{
\begin{tikzpicture}[scale=0.5]
\draw[thick] (-1,0)--(1,0); 
\end{tikzpicture}}}}
& {\vcenter{\hbox{
\begin{tikzpicture}[scale=0.6]
\draw[thick] (-1,-1)--(1,-1) -- (1,1)--(-1,1)-- cycle;
\draw[fill, opacity=0.12] (-1,-1)--(1,-1) -- (1,1)--(-1,1)-- cycle;
\end{tikzpicture}}}}
 & \vcenter{\hbox{
\begin{tikzpicture}[math3d, scale=0.58]
\draw[thick]  (1,-1,1) --(1,1,1)--(1,1,-1)--(1,-1,-1)--cycle ;
\draw[fill, opacity=0.12]  (1,-1,1) --(1,1,1)--(1,1,-1)--(1,-1,-1)--cycle ;
\draw[thick]  (1,-1,1)--(-1, -1,1)--(-1, 1,1);
\draw[fill, opacity=0.18]  (1,-1,1)--(-1, -1,1)--(-1, 1,1)--(1,1,1)--cycle ;
\draw[thick]  (-1, 1,1)--(-1,1,-1)--(1,1,-1);
\draw[fill, opacity=0.06]  (-1, 1,1)--(1,1,1)--(1,1,-1)--(-1,1,-1)--cycle ;
\draw[thick]  (1,1,1)--(-1,1,1);
\draw[thick]  (-1,-1,1)--(-1,1,1)--(-1,1,-1);
\draw[thick, opacity=0.2]  (-1,-1,1)--(-1,-1,-1)--(-1,1,-1);
\draw[thick, opacity=0.2]  (-1,-1,-1)--(1,-1,-1);
\draw (-1,-1,1.25) ;
\draw (1,1,-1.25) ;
\end{tikzpicture}}}
 \\
\cline{1-4}
\textsc{Associahedra} %%%%%%%% ASSOCIAHEDRA
 &
{\vcenter{\hbox{
\begin{tikzpicture}[yscale=0.5,xscale=0.5]
\draw[thick] (-1,0)--(1,0); 
\end{tikzpicture}}}}
& {\vcenter{\hbox{
\begin{tikzpicture}[xscale=0.35, yscale=0.25]
\draw[thick] (0,3)--(-2,1)--(-2,-1)--(0,-3)--(2.5,0)--cycle;
\draw[fill, opacity=0.12] (0,3)--(-2,1)--(-2,-1)--(0,-3)--(2.5,0)--cycle;
\end{tikzpicture}}}}
 &  {\vcenter{\hbox{\begin{tikzpicture}[scale=0.22, JLL3d]
\draw[thick] (3,4,3)--(3,0,3)--(3,0,-1)--(3,2,-1)--(3,4,1)--cycle;
\draw[thick] (-3,4,3)--(-3,-4,3)--(-1,-4,3);
\draw[thick] (-1,-4,3)--(3,0,3);
\draw[thick] (-3,-4,3)--(-1,-4,3)--(-1,-4,-3)--(1,-2,-3);
\draw[thick] (1,-2,-3)--(3,0,-1);
\draw[thick] (-1,-4,-3)--(-1,-4,3);
\draw[thick] (-1,-4,-3)--(1,-2,-3)--(1,0,-3)--(-3,0,-3); 
\draw[thick, opacity=0.2] (-3,4,1)--(-3,0,-3)--(-3,-4,-3)--(-3,-4,3)--(-3,4,3);
\draw[thick, opacity=0.2] (-3,-4,-3)--(-1,-4,-3);  
\draw[thick] (-3,4,3)--(3,4,3)--(3,4,1);
\draw[thick] (-3,-4,3)--(-3,4,3)--(-3,4,1)--(3,4,1);
\draw[thick] (-3,4,3)--(-3,4,1)--(-3,0,-3);
\draw[thick] (1,0,-3)--(3,2,-1);
\draw[fill, opacity=0.18] (-3,4,1)--(-3,4,3)--(3,4,3)--(3,4,1)--cycle;
\draw[fill, opacity=0.18] (-1,-4,3)--(3,0,3)--(3,4,3)--(-3,4,3)--(-3,-4,3)--cycle; 
\draw[fill, opacity=0.18] (-3,0,-3)--(-3,4,1)--(3,4,1)--(3,2,-1)--(1,0,-3)--cycle;
\draw[fill, opacity=0.06] (1,-2,-3)--(3,0,-1)--(3,0,3)--(-1,-4,3)--(-1,-4,-3)--cycle;
\draw[fill, opacity=0.06] (1,-2,-3)--(1,0,-3)--(3,2,-1)--(3,0,-1)--cycle; 
\draw[fill, opacity=0.12] (3,4,3)--(3,0,3)--(3,0,-1)--(3,2,-1)--(3,4,1)--cycle;
\draw (0,0,-3.9);
\draw (0,0,4.5);
\end{tikzpicture}}}}
\\
\cline{1-4}
\end{array}\]

\medskip

The two-fold main result of this paper is: an explicit operad structure on the Loday realizations of the associahedra together with a compatible approximation to the diagonal (\cref{thm:MainOperad}). To accomplish this, we first consider a geometric definition (\cref{def:Diag}) 
for a diagonal map of a polytope suitably oriented by a vector in general position. 
Such an approach comes from the theory of fiber polytopes of Billera--Sturmfels \cite{BilleraSturmfels92}, after Gel'fand--Kapranov--Zelevinsky's theory of secondary polytopes \cite{GKZ08}. 
In order to define the operad structure, we resolve the issue that a face of an associahedron may not be affinely equivalent to a product of the lower-dimensional associahedra, by introducing a notion of Loday realization with arbitrary \emph{weights}.
Since we are looking for an operad structure compatible with the diagonal,
we define it using the diagonal, without loss of generality.
(Notice that the aforementioned coherence for the diagonal maps with respect to the combinatorics of faces amounts precisely to this compatibility with the operad structure.) In the end, this provides the literature with the first  object common to both of the aforementioned communities, providing discrete geometers an extra algebraic structure on realizations of the associahedra, and  homotopy theorists a polytopal (and thus finite) cellular topological $\mathcal{A}_\infty$-operad that recognizes loop spaces. 

\bigskip

Throughout the paper, there is a dichotomy between pointwise and cellular formulas. 
In order to investigate their relationship and to make precise the various face-coherent properties, we introduce a meaningful notion of \emph{category of polytopes with subdivision} which suits our needs. 
Since the definition of the diagonal maps comes from the theory of fiber polytopes, we get an induced polytopal subdivision of the associahedra. In fact, we prove a \emph{magical formula} for it, in the words of Jean-Louis Loday: it is made up of the pairs of cells of matching dimensions and comparable under the Tamari order (\cref{thm:MagicFormula}).  This recovers the differential graded formula of \cite{SaneblidzeUmble04, MarklShnider06}.

\bigskip

The new methods introduced in the present paper should allow one to attack the problem of the approximation of the diagonal for other families of polytopes, such as the ones coming from the theory of operads. 
Our first subsequent plan
is to treat  the case of the multiplihedra \cite{Stasheff70} since these  polytopes encode the notion of $\infty$-morphisms between $\mathcal{A}_\infty$-algebras. This will provide us with the construction of the tensor products of $\mathcal{A}_\infty$-categories, which is needed in symplectic geometry. 
There are then the cases of the cyclohedra, permutoassociahedra, nestohedra, hypergraph polytopes, etc. These would give rise, for instance, to a tensor product construction for homotopy operads. Another relevant question the present approach allows one to study is ``what kind of monoidal $\infty$-category structure does the collection of $\mathcal{A}_\infty$-algebras admit?'' 
In \cite{MarklShnider06}, it is proven that the differential graded diagonal cannot be coassociative. We expect that the fiber polytope method can measure the failure of this coassociativity and a useful formulation for the attacking this problem.
\medskip

\subsection*{Layout} 
The paper is organized as follows. The first section recalls the main relevant notions, introduces the new category of polytopes in which we work. \cref{Sec:CanDia} gives a canonical definition of the diagonal map for positively oriented polytopes and states its cellular properties. In the third section, we endow the family of Loday realizations of the associahedra with a (nonsymmetric) operad structure compatible with the diagonal maps. \cref{sec:MagicalFormula} states and proves the magical cellular formula for the diagonal map of the associahedra. 
\medskip

\subsection*{Conventions} We use the conventions and notations of \cite{Ziegler95} for convex polytopes and the ones of  \cite{LodayVallette12} for operads. We consider only \emph{convex} polytopes whose vertices are the extremal points; they are equivalently defined as the intersection of finitely many half-spaces or as the convex hull of a finite set of points. We simply call them \emph{polytopes}; we denote their sets of vertices by $\Ve(P)$, their face lattices by $\La(P)$, and their normal fans by $\No_P$.
\medskip

\subsection*{Acknowledgements} The first author wishes to thank the International Liaison Office of School of Science of the University of Tokyo for their support through the Study and Visit Abroad Program. 
\newpage

\section{The approximation of the diagonal of the associahedra} 
\subsection{Planar trees, Tamari lattices, and associahedra}

We consider the set $\Tam{n}$ of planar binary (rooted) trees with $n$ leaves, for $n\geq 1$. We read planar binary trees according to gravity, that is from the leaves to the root. The edges of a rooted tree are of three types: the internal edges are bounded by two vertices, the leaves lie at the top and the root at the bottom. 

\begin{definition}[Tamari order \cite{Tamari51}]
The \emph{Tamari order} is the partial order, denoted by $<$, on the set of planar binary trees generated by the following  covering relation

\[{\vcenter{\hbox{
\begin{tikzpicture}[yscale=0.5,xscale=0.5]
\draw[thick] (0,-1)--(0,0) -- (-2,2);
\draw[thick] (-1,1)--(0,2) ;
\draw[thick] (0,0)--(1,1) ;
\draw [fill] (0,-1) circle [radius=0.014];
\draw [fill] (-2,2) circle [radius=0.014];
\draw [fill] (1,1) circle [radius=0.014];
\draw [fill] (0,2) circle [radius=0.014];

\draw (-2,2) node[above] {$t_1$}; 
\draw (0,2) node[above] {$t_2$}; 
\draw (1,1) node[above] {$t_3$}; 
\draw (0,-1) node[below] {$t_4$}; 
\end{tikzpicture}}}}
\prec
{\vcenter{\hbox{
\begin{tikzpicture}[yscale=0.5,xscale=0.5]
\draw[thick] (0,-1)--(0,0) -- (2,2);
\draw[thick] (1,1)--(0,2) ;
\draw[thick] (0,0)--(-1,1) ;
\draw [fill] (0,-1) circle [radius=0.014];
\draw [fill] (2,2) circle [radius=0.014];
\draw [fill] (-1,1) circle [radius=0.014];
\draw [fill] (0,2) circle [radius=0.014];

\draw (2,2) node[above] {$t_3$}; 
\draw (0,2) node[above] {$t_2$}; 
\draw (-1,1) node[above] {$t_1$}; 
\draw (0,-1) node[below] {$t_4$}; 
\end{tikzpicture}}}}\ ,
\]
where $t_i$, for $1\leq i\leq 4$, are planar binary trees. 
\end{definition}

For every $n\geq 1$, this forms a lattice $(\Tam{n}, <)$.  The
\emph{right-leaning} leaves or internal edges are the ones of type $\searrow$ and 
the \emph{left-leaning} leaves or internal edges are the ones of type $\swarrow$. So two trees satisfy $s\leq t$ if and only if one goes from $s$ to $t$ by switching 
 pairs of successive left and right-leaning  edges to a pair of successive right and left-leaning edges.

\begin{figure}[h]
\[\vcenter{\hbox{
\begin{tikzcd}[column sep=0.7cm, row sep=0.2cm]
& \TreeLL\arrow[rrdd, thin]\arrow[ld, thin]&& \\
\TreeLR\arrow[dd, thin]& && \\
& && \TreeCC \arrow[ldld, thin]\\
\TreeRL\arrow[rd, thin]& && \\
&\TreeRR &&
\end{tikzcd}
}}\]
\caption{The Tamari lattice $(\mathrm{PBT}_{4}, <)$ with minimum at the top.}
\end{figure}
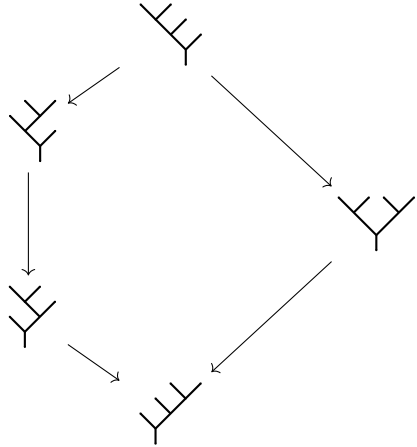

We also consider the  set $\PT{n}$ of all planar trees with $n$ leaves, for $n\geq 1$. 
Each of them forms a lattice (after adjoining a minimum)  under the following partial order: a planar tree $s$ is less than a planar tree $t$, denoted $s\subset t$, 
if $t$ can be obtained from $s$ by a sequence of  edge contractions. 

\begin{definition}[Associahedra]
For any $n\geq 2$, an \emph{$(n-2)$-dimensional associahedron} is a  polytope whose face lattice is isomorphic to the lattice of planar trees with $n$ leaves. 
\end{definition}

\begin{figure}[h]
\[
\begin{tikzpicture}[scale=0.4]
\draw[thick] (-5,0)--(0,4)--(5,0)--(2.5,-4)--(-2.5,-4)--cycle;
\draw[fill, opacity=0.12] (-5,0)--(0,4)--(5,0)--(2.5,-4)--(-2.5,-4)--cycle;
\draw (-5,0) node[left] {$\TreeLL$};
\draw (5,0) node[right] {$\TreeRR$};
\draw (0,4) node[above] {$\TreeCC$};
\draw (-2.5,-4) node[below left] {$\TreeLR$};
\draw (2.5,-4) node[below right] {$\TreeRL$};
\draw (-3.8,-2.6) node[left] {$\TreeAL$};
\draw (3.7,-2.6) node[right] {$\TreeRA$};
\draw (-2.3,1.8) node[above left] {$\TreeLA$};
\draw (2.3,1.8) node[above right] {$\TreeAR$};
\draw (0,-4.2) node[below] {$\TreeCA$};
\draw (0,-0.4) node {$\TreeC$};
\end{tikzpicture}
\]
\caption{A 2-dimensional associahedron.}
\end{figure}
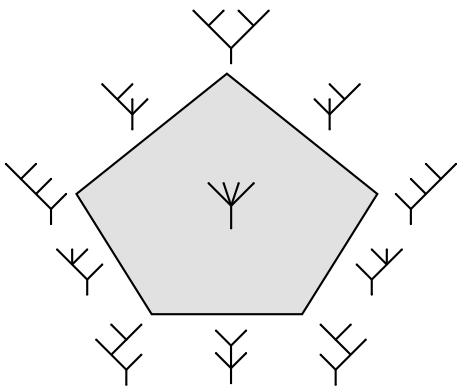

The codimension of a face is equal to the number of internal edges of the corresponding planar tree. 
The $1$-skeleton of an associahedron gives the Hasse diagram of the Tamari lattice. 

\medskip

The operation of grafting a tree $t$ at the $i^{\rm th}$-leaf of a tree $s$ is denoted by $s\circ_i t$. 
These maps endow the collections of planar (binary) trees with a non-symmetric operad structure. 
We denote the corolla with $n$ leaves by $c_n$, i.e. the tree with one vertex and no internal edge. The facets of an $(n-2)$-dimensional associahedron are labelled by 
the two-vertex planar trees $c_{p+1+r} \circ_{p+1} c_q$, for $p+q+r=n$ with $2\leq q\leq n-1$.

\subsection{Loday realizations of the associahedra}

An example of realization of the associahedra can be  given as follows; it is a weighted generalization of the one given by Jean-Louis Loday in \cite{Loday04a}. Notice that Loday realizations produced as convex hulls are the same  polytopes as Shnider--Stasheff \cite{StasheffShnider97} produced by intersections of half-spaces, or equivalently by truncations of standard simplices. 

\begin{definition}[Weighted planar binary tree]
A \emph{weighted planar binary tree} is a pair $(t, \omega)$ made up of a planar binary tree $t\in \Tam{n}$ with $n$ leaves having some weight $\omega= (\omega_1, \ldots, \omega_n) \in \ZP^n$. We call 
$\omega$ the \emph{weight} and
$n$ the \emph{arity} of the tree $t$ or the \emph{length} of the weight $\omega$.
\end{definition}

Let $(t, \omega)$ be a weighted planar binary tree with $n$ leaves. We order its $n-1$ vertices from left to right. At the $i^{\rm th}$ vertex, we consider the sum $\alpha_i$ of the weights of the leaves supported by its left input and 
 the sum $\beta_i$ of the weights of the leaves supported by its right input. 
Multiplying these two numbers, we consider the associated string which defines the following point: 
\[M(t, \omega) \coloneqq \big(\alpha_1\beta_1, \alpha_2\beta_2, \ldots, \alpha_{n-1}\beta_{n-1}\big)\in \RR^{n-1}\ . \]

\begin{definition}[Loday Realization]
 The \emph{Loday realization of weight $\omega$} is the  polytope
\[\K_\omega \coloneqq \conv \big\{M(t, \omega)\mid t\in \Tam{n} \big\}\subset \RR^{n-1}\ .\]
\end{definition}

The Loday realization associated to the standard weight $(1, \ldots, 1)$ is simply denoted by $\K_n$.
By convention, we define the polytope $K_\omega$, with weight $\omega=(\omega_1)$ of length $1$,  to be made up of one point labelled by the trivial tree $|$. 
\begin{figure}[h]
\[
\begin{tikzpicture}[scale=0.7, JLL3d]
\draw[->] (0,4,-2.5)--(1.5,4,-2.5) node[right] {$\vec e_1$};
\draw[->] (0,4,-2.5)--(0,5,-2.5) node[left] {$\vec e_2$};
\draw[->] (0,4,-2.5)--(0,4,-1.5) node[above] {$\vec e_3$};

\draw[thick] (3,4,3)--(3,0,3)--(3,0,-1)--(3,2,-1)--(3,4,1)--cycle;
\draw[thick] (-3,4,3)--(-3,-4,3)--(-1,-4,3);
\draw[thick] (-1,-4,3)--(3,0,3);
\draw[thick] (-3,-4,3)--(-1,-4,3)--(-1,-4,-3)--(1,-2,-3);
\draw[thick] (1,-2,-3)--(3,0,-1);
\draw[thick] (-1,-4,-3)--(-1,-4,3);
\draw[thick] (-1,-4,-3)--(1,-2,-3)--(1,0,-3)--(-3,0,-3); 
\draw[thick, opacity=0.2] (-3,4,1)--(-3,0,-3)--(-3,-4,-3)--(-3,-4,3)--(-3,4,3);
\draw[thick, opacity=0.2] (-3,-4,-3)--(-1,-4,-3);  
\draw[thick] (-3,4,3)--(3,4,3)--(3,4,1);
\draw[thick] (-3,-4,3)--(-3,4,3)--(-3,4,1)--(3,4,1);
\draw[thick] (-3,4,3)--(-3,4,1)--(-3,0,-3);
\draw[thick] (1,0,-3)--(3,2,-1);

\draw[fill, opacity=0.18] (-3,4,1)--(-3,4,3)--(3,4,3)--(3,4,1)--cycle;
\draw[fill, opacity=0.18] (-1,-4,3)--(3,0,3)--(3,4,3)--(-3,4,3)--(-3,-4,3)--cycle; 
\draw[fill, opacity=0.18] (-3,0,-3)--(-3,4,1)--(3,4,1)--(3,2,-1)--(1,0,-3)--cycle;
\draw[fill, opacity=0.06] (1,-2,-3)--(3,0,-1)--(3,0,3)--(-1,-4,3)--(-1,-4,-3)--cycle;
\draw[fill, opacity=0.06] (1,-2,-3)--(1,0,-3)--(3,2,-1)--(3,0,-1)--cycle; 
\draw[fill, opacity=0.12] (3,4,3)--(3,0,3)--(3,0,-1)--(3,2,-1)--(3,4,1)--cycle;
\end{tikzpicture}
\]
\caption{The Loday realization $\K_5$, see \cref{prop:OrientationVector} for the definition of the $\vec e_i$.}
\end{figure}
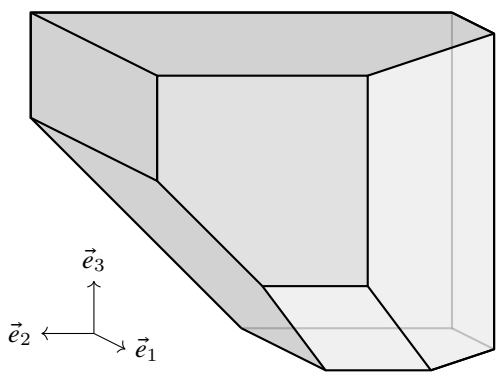

In the sequel, we will need the following key properties of these  polytopes. 

\begin{proposition}\label{prop:PropertiesKLoday}
The Loday realization $\K_\omega$  satisfies the following properties. 
\begin{enumerate}
\item It is contained in the hyperplane with equation
\[
\sum_{i=1}^{n-1} x_i = \sum_{1\leq k< l\leq n} \omega_k \omega_l\ .
\]

\item Let $p+q+r=n$ with $2\leq q\leq n-1$. For any $t\in \Tam{n}$, the point $M(t, \omega)$ is contained in the half-space defined by the inequality
\[
x_{p+1}+\cdots+x_{p+q-1}\geq \sum_{p+1\leq k<l\leq p+q} \omega_k \omega_l\ , 
\]
with equality if and only if the tree $t$ can be decomposed as $t=u\circ_{p+1} v$, where $u\in\Tam{p+1+r}$ and $v\in \Tam{q}$. 

\item The polytope $K_\omega$ is the intersection of the hyperplane of \emph{(1)} and the half-spaces of \emph{(2)}. 

\item The face lattice $(\La(\K_\omega), \subset)$ is equal to the lattice $(\mathrm{PT}_n, \subset)$
of planar trees with $n$ leaves. 

\item Any face of a Loday realization is isomorphic to a product of Loday realizations, via a permutation of coordinates. 
\end{enumerate}
\end{proposition}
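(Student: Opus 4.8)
The plan is to prove the five statements essentially in the given order, with (1)--(3) forming the analytic core and (4)--(5) following by combinatorial bookkeeping. For (1), I would fix a planar binary tree $t$ with $n$ leaves and weight $\omega$, and compute $\sum_{i=1}^{n-1}\alpha_i\beta_i$ directly. The key observation is that each unordered pair $\{k,l\}$ of leaves has a unique \emph{lowest common ancestor} vertex, say the $i^{\rm th}$ one, and that pair contributes $\omega_k\omega_l$ to the product $\alpha_i\beta_i$ (one leaf sits above the left input, the other above the right input); conversely $\alpha_i\beta_i = \sum \omega_k\omega_l$ over all pairs whose meet is the $i^{\rm th}$ vertex. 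Summing over $i$ reorganizes the total as $\sum_{k<l}\omega_k\omega_l$, independent of $t$, hence the whole vertex set — and therefore $\K_\omega$ — lies on that hyperplane.

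For (2), I would run the same accounting but restricted to the block of leaves $p+1,\dots,p+q$. The coordinates $x_{p+1},\dots,x_{p+q-1}$ record the products $\alpha_i\beta_i$ at the vertices $i=p+1,\dots,p+q-1$; summing them collects exactly the $\omega_k\omega_l$ for those pairs $\{k,l\}\subset\{p+1,\dots,p+q\}$ whose lowest common ancestor is among those vertices, plus possibly extra nonnegative cross-terms. Every such pair contributes $\omega_k\omega_l\ge 0$, and all the pairs inside the block are accounted for \emph{exactly once} precisely when no leaf outside the block $\{p+1,\dots,p+q\}$ lies "between" two leaves of the block in the tree, i.e.\ precisely when the block spans a full subtree; this is equivalent to $t$ admitting a decomposition $t=u\circ_{p+1}v$ with $v\in\Tam{q}$ the subtree on that block. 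When the block does not span a subtree, at least one pair inside it has its lowest common ancestor at a vertex outside the range, producing a strict inequality. I would want to write this carefully using the "between" combinatorics of leaves and internal vertices; this is the step I expect to be the main obstacle, since the bookkeeping of which pairs are counted and the nonnegativity of the leftover terms must be pinned down without hand-waving.

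For (3), I would argue that the polytope $Q$ cut out by the hyperplane of (1) and the half-spaces of (2) has $\K_\omega\subseteq Q$ by (1)--(2), and conversely that every vertex of $Q$ is of the form $M(t,\omega)$: this is a dimension/counting argument — there are $n-2$ "independent directions" in the hyperplane, the facet-defining inequalities are indexed by the two-vertex trees $c_{p+1+r}\circ_{p+1}c_q$, and a vertex of $Q$ is the intersection of $n-2$ of them whose simultaneous equality conditions, by (2), force a nested family of subtree-decompositions that assemble into a unique binary tree $t$. One can equivalently invoke that this is exactly the Shnider--Stasheff truncated-simplex description, as remarked before the definition, and check the two presentations agree. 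Then (4) is immediate: (3) realizes $\K_\omega$ as an intersection of half-spaces indexed by the two-vertex trees, and the face lattice of such a truncation of the simplex is well known to be $(\PT{n},\subset)$ — a face is the intersection of a set of facets that is "compatible" in the tree sense, and compatible families of two-vertex trees correspond bijectively to planar trees ordered by edge contraction; I would cite Loday/Shnider--Stasheff for this identification or reproduce the short argument. Finally, for (5), given a face $F$ corresponding to a planar tree $s$ with internal edges cutting the leaf set into blocks $B_0,\dots,B_m$ (the blocks of leaves feeding the vertices of $s$), the equality conditions from part (2) pin the "internal" coordinates of each block to constants, so $F$ is (after discarding those constant coordinates and reindexing) cut out exactly by the hyperplane-and-half-space description of $\K_{\omega|_{B_0}}\times\cdots\times\K_{\omega|_{B_m}}$; the reindexing that groups each block's coordinates together is the asserted coordinate permutation, and it carries $M(-,\omega)$ of trees refining $s$ to the product of the $M(-,\omega|_{B_j})$, giving the isomorphism of polytopes.
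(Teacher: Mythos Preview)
Your outline for (1)--(4) is essentially the paper's approach; your lowest-common-ancestor bookkeeping for (1)--(2) makes explicit what the paper dismisses as ``straightforward'', and (4) is handled the same way. For (3), however, the paper's actual work is precisely the step you only gesture at: one must show that any $n-2$ facet equalities that can hold simultaneously at a point of the half-space polytope come from intervals $[p+1,p+q-1]$ that are pairwise nested or disjoint. The paper proves this by contradiction: given two overlapping intervals, add the two equalities, subtract the inequality (2) for their union, and simplify to obtain a strict upper bound on the partial sum over their intersection that violates inequality (2) for that smaller interval. You should supply this computation rather than assert the nesting.

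Your treatment of (5) contains a genuine error. The factor polytopes are \emph{not} $\K_{\omega|_{B_j}}$ for restricted weights: whenever a vertex of $s$ has a child that is an internal edge rather than a leaf, the corresponding weight entry must be the \emph{sum} of the $\omega_k$ over all leaves above that child, not a single $\omega_k$. This is precisely why the weighted version was introduced---a facet of the standard $\K_n$ is not a product of standard-weight Loday realizations. The paper handles (5) by reducing to facets (which suffices by induction through (4)) and writing the two weights explicitly as
\[
\overline\omega=(\omega_1,\ldots,\omega_p,\ \omega_{p+1}+\cdots+\omega_{p+q},\ \omega_{p+q+1},\ldots,\omega_n),
\qquad
\widetilde\omega=(\omega_{p+1},\ldots,\omega_{p+q}),
\]
after which the block permutation $\Theta$ sends $\K_{\overline\omega}\times\K_{\widetilde\omega}$ onto the facet. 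Your phrase ``discarding those constant coordinates'' is also off: no single coordinate is constant on a facet, only the partial sum $x_{p+1}+\cdots+x_{p+q-1}$ is.
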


\begin{proof}
This proposition is a weighted version of the results of \cite{Loday04a}, except for Point~(5), which actually prompted the introduction of this more general notion. 
\leavevmode
\begin{enumerate}
\item This is straightforward from the definition.
\item This is straightforward too.
\item Let us denote by $P$ the polytope defined by the intersection of the hyperplane of (1) and the half-spaces of (2). 
One can see that the points $M(t, \omega)$, for $t\in \Tam{n}$, are vertices of $P$, since they are defined by a system of $n-1$  independent linear equations: the one of type (1) and $n-2$ of type (2). 
In the other way round, any vertex of $P$ is characterized by a system $n-1$ independent linear equations with the one of type (1) and $n-2$ of type (2). We claim that any pair of equations 
\[\qquad\quad 
x_{p+1}+\cdots+x_{p+q-1}= \sum_{p+1\leq k<l\leq p+q} \omega_k \omega_l 
\quad \text{and} \quad 
x_{p'+1}+\cdots+x_{p'+q'-1}= \sum_{p'+1\leq k<l\leq p'+q'} \omega_k \omega_l 
\]
of type (2) appearing here is such that the corresponding intervals $[p+1, \ldots, p+q-1]$ 
and $[p'+1, \ldots, p'+q'-1]$ are either nested or disjoint. 
If not, we are in the configuration: $p+1<p'+1\leq p+q+1<p'+q'+1$.
Using these equalities and the defining inequalities of $P$, one can get 
\begin{align*}
x_{p'+1}+\cdots+x_{p+q-1}&\leq 
\sum_{p+1\leq k<l\leq p+q} \omega_k \omega_l 
+\sum_{p'+1\leq k<l\leq p'+q'} \omega_k \omega_l 
-\sum_{p+1\leq k<l\leq p'+q'} \omega_k \omega_l \\
&=
\sum_{p'+1\leq k<l\leq p+q} \omega_k \omega_l 
 -
 \sum_{p+1\leq k\leq p'\atop p+q+1\leq l\leq p'+q'} \omega_k \omega_l 
< \sum_{p'+1\leq k<l\leq p+q} \omega_k \omega_l \ ,
\end{align*}
which contradicts the definition of $P$. 
The solution of a system of equations as above, where the defining intervals are nested or disjoint, is a point 
$M(t,\omega)$, with 
$t\in \Tam{n}$.

\item Point~(2) shows that the facets of $\K_\omega$ correspond bijectively to two-vertex planar trees:  the facet labelled by 
$c_{p+1+r}\circ_{p+1} c_q$, for $p+q+r=n$ and $2\leq q \leq n-1$, is the convex hull of the points $M(t, \omega)$ associated to planar binary trees of the form $t=u\circ_{p+1} v$, for $u\in\Tam{p+1+r}$ and $v\in \Tam{q}$. 
Any face of $\K_\omega$ of codimension $k$, for 
$0\leq k \leq n-2$, is defined as an intersection of $k$ facets. The above description of facets gives that the set of faces of codimension $k$ is bijectively labelled by planar trees with $k$ internal edges:  the face corresponding to such a planar tree $t$ is the convex hull of points $M(s, \omega)$, for $s\subset t$. With the top dimensional face labeled by the corolla $c_n$, the statement is proved. 

\item The proof of the above point shows that it is enough to treat the case of the facets. 
Let $p+q+r=n$ with $2\leq q \leq n-1$. We consider the following two weights 
\[
\overline{\omega}\coloneqq (\omega_1, \ldots, \omega_{p}, \omega_{p+1}+\cdots+\omega_{p+q}, \omega_{p+q+1}, \ldots,  \omega_{n})
\quad \text{and} \quad 
\widetilde{\omega}\coloneqq (\omega_{p+1}, \ldots, \omega_{p+q})\ .
\]
The image of $\K_{\overline{\omega}}\times \K_{\widetilde{\omega}}\hookrightarrow \K_\omega$ under  the isomorphism 
\begin{align*}
\begin{array}{rccc}
\Theta\  : &  \RR^{p+r}\times \RR^{q-1} &\xrightarrow{\cong} &\RR^{n-1}\\
&(x_1, \ldots, x_{p+r})\times (y_1, \ldots, y_{q-1}) & \mapsto& 
(x_1, \ldots, x_{p} , y_1, \ldots, y_{q-1}, x_{p+1}, \ldots, x_{p+r})
\end{array}
\end{align*}
is equal to the facet labeled by the planar tree $c_{p+1+r}\circ_{p+1} c_q$\ .
\end{enumerate}
\end{proof}

In other words, Point~(4) shows that the polytopes $\K_\omega$ are realizations of the associahedra. 

\subsection{The category of polytopal subdivisions}
The proposed notions of category of polytopes  present in the literature only allow affine maps, ,which is too restrictive for our purpose: the facets of the Loday realizations of associahedra with standard weights are not affinely equivalent to the product of lower realizations with standard weights. In order to introduce a more suitable category, we begin with the following definition, which extends the classical notion of simplicial complex. 

\begin{definition}[Polytopal complex]
A \emph{polytopal complex} is a finite collection $\sC$ of polytopes of $\RR^n$ satisfying the following conditions:
\begin{enumerate}
\item the empty polytope $\emptyset$ is contained in $\sC$,
\item $P \in \sC$ implies $\La(P) \subset \sC$, 
\item $P, Q\in \sC$ implies $P\cap Q\in \La(P)\cap \La(Q)$.
\end{enumerate}
\end{definition}
Any polytope $P$ gives an example of polytopal complex $\La(P)$ made up of  all its faces.
A subcomplex of a polytopal complex $\sC$ is a subcollection 
$\mathcal{D}\subset \sC$ which  forms a polytopal complex.
The \emph{underlying set} of a collection $\sC$ is given by the union $|\mathcal{C}|\coloneqq\bigcup_{P\in \sC} P\subset \RR^n$.

\begin{definition}[Polytopal subdivision]
A \emph{polytopal subdivision} of a polytope $P$ is a polytopal complex $\sC$ 
whose underlying set $|\sC|$ is equal to $P$. 
\end{definition}

The face poset $\La(\sC)$ of a polytopal complex is defined in the obvious way. 
We say that two polytopal complexes are \emph{combinatorially equivalent} when their face posets are isomorphic.
Now let us introduce the category we will work in.

\begin{definition}[The category $\PolySub$]
The category $\PolySub$ is made up of the following data.
\begin{description}
\item[{\sc Objects}] An object is a $d$-dimensional  polytope $P$ in the $n$-dimensional Euclidian space $\RR^n$, for any $0\leq d\leq n$.
\item[{\sc Morphisms}] A continuous map  $f: P\to Q$ is a morphism when 
it sends  $P$ homeomorphically to the underlying set $|\mathcal{D}|$ of a polytopal subcomplex $\mathcal{D}\subset \La(Q)$ of $Q$ 
such that $f^{-1}(\mathcal D)$ defines a polytopal subdivision of $P$.
\end{description}
\end{definition}

There exists obvious forgetful functors from the category $\PolySub$ to the category $\Top$ of topological spaces with continuous maps and to the category $\CW$ of CW complexes with cellular maps. The latter functor is well-defined since any morphism in $\PolySub$ is automatically cellular.
An isomorphism $P\cong Q$ in this category is a cell-respecting homeomorphism which induces 
a combinatorial equivalence  $\La (P)\cong \La(Q)$.

\begin{lemma}
The category $\PolySub$ endowed with the direct product $\times$ and the zero-dimensional polytope made up of one point is a symmetric monoidal category.
\end{lemma}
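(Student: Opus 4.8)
The plan is to verify the symmetric monoidal axioms directly, the point being that all the structure is inherited from the underlying categories of topological spaces (or, more precisely, from the cartesian monoidal structure on sets/spaces), so the only real work is to check that the direct product of two polytopes is again an object of $\PolySub$ and that the direct product of two morphisms is again a morphism. First I would observe that if $P\subset\RR^m$ is $d$-dimensional and $Q\subset\RR^n$ is $e$-dimensional, then $P\times Q\subset\RR^{m+n}$ is a $(d+e)$-dimensional polytope, so it is indeed an object; the one-point polytope $\ast\in\RR^0$ is an object and is a strict unit since $\ast\times P$ is literally $P$ under the canonical identification $\RR^0\times\RR^m=\RR^m$. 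Associativity $P\times(Q\times R)=(P\times Q)\times R$ and the symmetry $P\times Q\cong Q\times P$ are again the evident set-theoretic identities/coordinate permutations, which are cell-respecting homeomorphisms inducing combinatorial equivalences of face lattices (recall $\La(P\times Q)\cong\La(P)\times\La(Q)$), hence isomorphisms in $\PolySub$; the pentagon and hexagon coherence diagrams then commute because they already commute in $\Top$ and the forgetful functor $\PolySub\to\Top$ is faithful.

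The one step that genuinely needs an argument is that $\PolySub$ is closed under $\times$ on morphisms, i.e. that if $f\colon P\to Q$ and $g\colon P'\to Q'$ are morphisms, then $f\times g\colon P\times P'\to Q\times Q'$ is a morphism. I would argue as follows. By hypothesis $f$ is a homeomorphism from $P$ onto $\abs{\mathcal D}$ for a subcomplex $\mathcal D\subset\La(Q)$, with $f^{-1}(\mathcal D)$ a polytopal subdivision of $P$; similarly for $g$ with $\mathcal D'\subset\La(Q')$. Then $f\times g$ is a homeomorphism onto $\abs{\mathcal D}\times\abs{\mathcal D'}=\abs{\mathcal D\times\mathcal D'}$, where $\mathcal D\times\mathcal D'\coloneqq\{R\times R'\mid R\in\mathcal D,\ R'\in\mathcal D'\}$ is a subcomplex of $\La(Q\times Q')\cong\La(Q)\times\La(Q')$ — one checks the three polytopal-complex axioms for $\mathcal D\times\mathcal D'$ from those for $\mathcal D$ and $\mathcal D'$, using that faces of $R\times R'$ are products of faces and that $(R_1\times R_1')\cap(R_2\times R_2')=(R_1\cap R_2)\times(R_1'\cap R_2')$. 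Finally $(f\times g)^{-1}(\mathcal D\times\mathcal D')=f^{-1}(\mathcal D)\times g^{-1}(\mathcal D')$, and the product of a polytopal subdivision of $P$ with one of $P'$ is a polytopal subdivision of $P\times P'$ (same three axioms, same product-of-faces computation, plus $\abs{\sC\times\sC'}=\abs{\sC}\times\abs{\sC'}=P\times P'$). Hence $f\times g$ is a morphism. Bifunctoriality of $\times$ (compatibility with composition and identities) is then immediate since composition in $\PolySub$ is composition of underlying maps.

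The main obstacle, such as it is, is purely bookkeeping: verifying that the collection $\mathcal D\times\mathcal D'$ and the subdivision $f^{-1}(\mathcal D)\times g^{-1}(\mathcal D')$ satisfy the defining axioms of a polytopal complex, and in particular the intersection axiom $(3)$, which rests on the elementary but slightly fiddly fact that the face lattice of a product of polytopes is the product of the face lattices and that intersections of product cells decompose coordinatewise. Once that lemma on products of polytopal complexes is recorded, everything else is formal. I would therefore isolate that lemma first and then deduce the monoidal structure.
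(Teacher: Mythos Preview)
Your proposal is correct and takes the same approach as the paper: direct verification of the axioms. The paper's proof consists of the single sentence ``The verification of the axioms is straightforward,'' so you have simply supplied the details that the authors omitted; in particular, your identification of the closure of morphisms under $\times$ as the only nontrivial point, and your argument via $\La(Q\times Q')\cong\La(Q)\times\La(Q')$ and the coordinatewise decomposition of intersections, is exactly the kind of check the word ``straightforward'' is gesturing at.
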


\begin{proof}
The verification of the axioms is straightforward. 
\end{proof}

This extra structure allows one to consider operads in the category $\PolySub$. 
Since the  cellular chain functor $\PolySub \to \textsf{dgMod}_{\ZZ}$ is 
 strong symmetric monoidal, it induces a functor from the category of polytopal (non-symmetric) operads to the category of differential graded (non-symmetric) operads. 

\subsection{The approximation of the diagonal of the associahedra}
In the sequel, we solve the following two-fold problem. 

\begin{problem}\label{Prob:cellulardiagonal}\leavevmode
\begin{enumerate}
\item Endow the collection of Loday realizations of the associahedra $\{K_n\}_{n\geqslant 1}$ with a nonsymmetric operad structure in the  category $\PolySub$, whose induced set-theoretical nonsymmetric operad structure on the set of faces coincides with that of planar trees.

\item Endow the collection $\{K_n\}_{n\geqslant 1}$ with diagonal maps $\{\triangle_n: K_n\to K_n\times K_n\}_{n\geq 1}$ which form a morphism of nonsymmetric operads in the category $\PolySub$. 
\end{enumerate}
\end{problem}

\begin{remark}\leavevmode
\begin{enumerate}
\item Even in the category of topological spaces and for any  family of realizations of the associahedra, we do not know any solution to this question in the existing literature. 

\item The compatibility of the diagonal maps with the operad structure amounts precisely to the coherence required by  the approximation of the diagonal maps with respect to sub-faces by  Point~(5) of \cref{prop:PropertiesKLoday}.
\end{enumerate}
\end{remark}

In order to find an operadic cellular approximation to the diagonal of the associahedra, we introduce ideas coming from   the theory of fiber polytopes \cite{BilleraSturmfels92} as follows. 

\section{Canonical diagonal for positively oriented  polytopes}\label{Sec:CanDia}

\subsection{Positively oriented polytopes}

\begin{definition}[Positively oriented polytope]\leavevmode
\begin{enumerate}
\item An \emph{oriented polytope} is a  polytope $P\subset \RR^n$ endowed  with a vector $\vec v\in \RR^n$ such that no edge of $P$ is perpendicular to $\vec v$, see \cref{Fig:PosOr}\ . 
\begin{figure}[h]
\[\vcenter{\hbox{
\begin{tikzpicture}[xscale=0.5, yscale=0.4]
\draw[thick] (0,3)--(-2,1)--(-2,-1)--(0,-3)--(3,0)--cycle;
\draw[->] (4.5,1.5)--(4.5,-1.5);
\draw (4.5,0) node[right] {$\vec v$};
\end{tikzpicture}
}}\]
\caption{An oriented polytope.}\label{Fig:PosOr}
\end{figure}
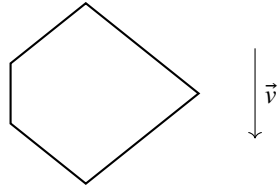

\item A \emph{positively oriented polytope} is an oriented polytope $(P, \vec v)$ such that the intersection polytope 
\[
\big(P\cap \rho_z P, \vec v\big) 
\]
is oriented, where $\rho_z\coloneqq 2z-P$ stands for the reflection with respect to any point $z\in P$, see \cref{Fig:Diag}. 
\end{enumerate}
\end{definition}

The data of an orientation vector induces a poset structure on the set of vertices $\Ve(P)$ of $P$, which is the transitive closure of the relation induced by the oriented edges of the 1-skeleton.

\begin{definition}[Well-oriented realization of the associahedron]
A \emph{well-oriented realization of the associahedron} is a  positively oriented polytope which realizes the associahedron and such that the orientation vector induces the Tamari lattice on the set of vertices. 
\end{definition}

\begin{proposition}\label{prop:OrientationVector}
Let $\omega$ be a weight of length $n$. 
Any vector $\vec v=(v_1, \ldots, v_{n-1})\in \RR^{n-1}$ satisfying $v_1>v_2>\cdots >v_{n-1}$ induces a
well-oriented realization $(\K_\omega, \vec v)$ of the associahedra.
\end{proposition}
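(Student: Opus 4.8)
The plan is to verify the two requirements separately: first that $(\K_\omega,\vec v)$ is positively oriented, and second that the induced vertex poset is the Tamari lattice. I would begin with the orientation condition: no edge of $\K_\omega$ is perpendicular to $\vec v$. Edges of $\K_\omega$ correspond, by Point~(4) of \cref{prop:PropertiesKLoday}, to planar trees with exactly $n-2$ internal edges, i.e. to pairs of planar binary trees $s\lessdot t$ differing by a single covering relation in the Tamari order. By the explicit description of the covering relation, such $s$ and $t$ differ in the position of exactly one internal edge, and one computes directly from the formula $M(t,\omega)=(\alpha_1\beta_1,\ldots,\alpha_{n-1}\beta_{n-1})$ that $M(t,\omega)-M(s,\omega)$ is supported on a contiguous block of coordinates $\{i,i+1,\ldots,j\}$ with all entries of a single sign — in fact, the edge vector points in a direction where the coordinates that change do so monotonically in a controlled way. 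Pairing with $\vec v$, whose components are strictly decreasing, then gives a strictly nonzero value; this uses $v_1>\cdots>v_{n-1}$ essentially. This same sign computation simultaneously shows that the oriented edge goes from $s$ to $t$ precisely when $s<t$ in the Tamari order, which establishes that the induced poset on $\Ve(\K_\omega)$ has Hasse diagram the Tamari lattice (and hence equals it, since the Tamari order is the transitive closure of its covering relations).

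For the positive-orientation condition, I would fix $z\in\K_\omega$ and consider $P\cap\rho_z P$ where $\rho_z P = 2z - \K_\omega$. The key point is that an edge of the intersection polytope $P\cap\rho_z P$ is either contained in an edge of $P$, or contained in an edge of $\rho_z P$ (which is the negative of an edge of $P$, hence equally non-perpendicular to $\vec v$), or lies in the relative interior of a $2$-face and is obtained as the intersection of a face of $P$ with a face of $\rho_z P$. In the first two cases the non-perpendicularity is immediate from the previous paragraph. In the remaining case I would argue that such an edge direction lies in the linear span of the edge directions of a single $2$-dimensional face $F$ of $\K_\omega$: by Point~(5) of \cref{prop:PropertiesKLoday} every such $F$ is (up to a coordinate permutation) a product of lower Loday realizations, so $F$ is either a pentagon $\K_4$ or a square $\K_{\omega'}\times\K_{\omega''}$, and in either case its edge directions, and all positive combinations thereof arising as intersections, can be checked to have nonzero pairing with $\vec v$ — this reduces to the same monotonicity estimate on the coordinates of $M$, now applied inside a $2$-face. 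It may be cleaner to phrase this as: the normal fan $\No_{\K_\omega}$ refines along $\vec v$ in the sense that $\vec v$ lies in no wall orthogonal to an edge, and intersecting with $\rho_z P$ only produces edge directions that are already ``visible'' from the rays of $\No_{\K_\omega}$.

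The main obstacle I anticipate is the third case above: controlling the edge directions of $\K_\omega\cap\rho_z\K_\omega$ that are genuinely new, i.e.\ not parallel to any edge of $\K_\omega$ itself. One needs that every such direction still has strictly positive (or strictly negative) pairing with $\vec v$, and this is where the specific geometry of the Loday realization — in particular the facet inequalities of Point~(2), whose supporting hyperplanes have normals $e_{p+1}+\cdots+e_{p+q-1}$ — must be exploited: an edge of the intersection lies in $H\cap H'$ for facet hyperplanes $H$ of $P$ and $H'$ of $\rho_z P$, and since both normals are $0/1$-vectors supported on intervals, their common orthogonal complement inside the ambient hyperplane of Point~(1) is spanned by vectors whose $\vec v$-pairing one can bound below in absolute value by a positive quantity depending only on the gaps $v_i-v_{i+1}$. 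I would carry this out by a direct case analysis on whether the two defining intervals are nested, disjoint, or crossing, mirroring the argument already used in the proof of Point~(3) of \cref{prop:PropertiesKLoday}; the non-crossing cases are straightforward and the crossing case is where the real work lies, but the inequality chain there is the same kind of telescoping sum over $\omega_k\omega_l$ products.
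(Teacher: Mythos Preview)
Your overall architecture is right, but there are two concrete errors.

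\medskip

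\textbf{Edge vectors of $\K_\omega$.} For a Tamari covering $s\prec t$ the difference $M(t,\omega)-M(s,\omega)$ is \emph{not} supported on a contiguous block with a single sign. A single rotation affects exactly the two vertices involved, so the edge vector has precisely two nonzero coordinates, of opposite signs: it is $(0,\ldots,0,x,0,\ldots,0,-x,0,\ldots,0)$ with $x>0$ and the $x$ in an earlier slot than the $-x$. (This is forced anyway by Point~(1) of \cref{prop:PropertiesKLoday}: the coordinates sum to a constant.) The pairing with $\vec v$ is then $x(v_i-v_j)>0$ for $i<j$, which gives both orientation and the Tamari order at once. Your conclusion survives, but the mechanism you describe does not.

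\medskip

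\textbf{Edges of the intersection.} The trichotomy ``edge of $P$ / edge of $\rho_z P$ / inside a $2$-face'' is not valid: an edge of $\K_\omega\cap\rho_z\K_\omega$ is a one-dimensional intersection of several facets drawn from \emph{both} polytopes, and there is no reason the smallest face of $\K_\omega$ containing it should be two-dimensional. The pentagon/square case analysis on $2$-faces therefore does not cover the situation.

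Your last paragraph is much closer to what actually works, but the nested/disjoint/crossing split is unnecessary. The clean argument is: since $\K_\omega$ and $\rho_z\K_\omega$ have parallel facets, every facet normal in play is either $\vec n=(1,\ldots,1)$ or some $\vec n_F=(0,\ldots,0,1,\ldots,1,0,\ldots,0)$. Write an edge direction as $\vec d=\sum_{j=1}^{n-2} a_j\vec e_j$ with $\vec e_j=(0,\ldots,1,-1,\ldots,0)$ (this is exactly the condition $\langle\vec n,\vec d\rangle=0$). Then each constraint $\langle\vec n_F,\vec d\rangle=0$ reads $a_p=a_{p+q-1}$ (with the convention $a_0=a_{n-1}=0$), i.e.\ it merely equates two of the $a_j$. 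A one-dimensional solution space therefore forces $\vec d=\lambda\sum_{j\in S}\vec e_j$ for some nonempty $S$, whence
\[
\langle\vec d,\vec v\rangle=\lambda\sum_{j\in S}(v_j-v_{j+1})\neq 0.
\]
No case analysis on how the intervals overlap is needed; the interval structure only enters through the fact that $\langle\vec n_F,\vec e_j\rangle$ vanishes except at the two endpoints of the interval.
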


\begin{proof}
Let us first prove that such an orientation vector $\vec v$ induces the Tamari lattice. 
Let $s\prec t$ in the Tamari lattice. The corresponding edge in $\K_\omega$ is of the form
\[\overrightarrow{M(s,\omega)M(t,\omega)}=(0, \ldots, 0, x , 0, \ldots, 0, -x, 0, \ldots, 0)\ , \]
 with $x>0$, which implies 
$\Big\langle\overrightarrow{M(s,\omega)M(t,\omega)}, \vec v\Big\rangle= x(v_{j+i}-v_i)>0$\ .
This also proves that $(K_\omega ,\vec v)$ is oriented. 

Let us now prove that $(\K_\omega ,\vec v)$ is positively oriented. 
We denote by $\vec n\coloneqq (1, \ldots, 1)$ and $\vec n_F\coloneqq (0, \ldots, 0,\allowbreak {1}, \ldots, {1}, 0, \ldots, 0)$ the normal vectors of a facet $F$, given by  Points~(1) and (2) of \cref{prop:PropertiesKLoday}.
Since edges of $\K_\omega\cap \rho_z \K_\omega$ 
are one-dimensional intersections of facets of 
$\K_\omega$ or $\rho_z \K_\omega$, their directions $\vec d$ are the unique solutions to a system of equations of type $\langle \vec n, \vec d\rangle=0$ and $\langle \vec n_F, \vec d\rangle=0$, for some set of facets $F$. 
The first equation imposes $\vec d=\sum_{j=1}^{n-2} a_j \vec e_j$, where 
$\vec e_j\coloneqq (0, \ldots, 1, -1, \ldots, 0)$, in which $1$ is in the $j$-th place for $1\leq j\leq n-2$. 
The second equation is equivalent to one of the following three constraints 
$a_{p+1}=0$, $a_{p+q-1}=0$, or $a_{p+q-1}=a_{p+1}$. 
Therefore, $\vec d$ is of the form $\lambda\left(\vec e_{j_1}+\cdots +\vec e_{j_k}\right)$, with $\lambda \in \RR\setminus \{0\}$, and so
$\ang{\vec d, \vec v}\neq 0$. 
\end{proof}

\subsection{Construction of diagonal maps}

Before getting into specific argument on the associahedra, we construct a diagonal map $\triangle: P\to P\times P$ for any positively oriented polytope $(P, \vec v)$. Let $\tp P$ (resp. $\bm P$) denote the top (resp. bottom) vertex of $P$ with respect to the orientation vector $\vec v$.

\begin{definition}[Diagonal of a positively oriented polytope]\label{def:Diag}
The \emph{diagonal} of a positively oriented polytope $(P, \vec v)$ is defined by 
\begin{align*}
\begin{array}{rlcl}
\triangle_{(P,\vec v)}\  : & P &\to  &P\times P\\
&z & \mapsto& 
\bigl(\bm(P\cap \rho_z P),\,  \tp(P\cap \rho_z P)\bigr) \ .
\end{array}
\end{align*}
\end{definition}

\begin{figure}[h]
\[
\begin{tikzpicture}[xscale=0.8, yscale=0.7]
\draw[thick] (1,3)--(-1,1)--(-1,-1)--(1,-3)--(4,0)--cycle;
\draw[thick] (-1,3)--(1,1)--(1,-1)--(-1,-3)--(-4,0)--cycle;

\draw[thick] (0,-2)--(-1,-1)--(-1, 1)--(0, 2)--(1, 1)--(1,-1)--cycle;
\draw[fill, opacity=0.15] (0,-2)--(-1,-1)--(-1, 1)--(0, 2)--(1, 1)--(1,-1)--cycle;

\draw[->] (5,1.2)--(5,-1.2);
\draw (5,0) node[right] {$\vec v$};
\draw (0,0) node {$\bullet$};
\draw (0,2) node {$\bullet$};
\draw (0,-2) node {$\bullet$};

\draw (0,0) node[right] {$z$};
\draw (0,1.9) node[below] {$x$};
\draw (0,-1.9) node[above] {$y$};

\draw (3,-2) node {$P$};
\draw (-3,-2) node {$\rho_z P$};
\end{tikzpicture}
\]
\caption{The diagonal map $\triangle(z)_{(P,\vec v)}=\bigl(\bm(P\cap \rho_z P),\,  \tp(P\cap \rho_z P)\bigr)=(x,y)$\ .}\label{Fig:Diag}
\end{figure}

Let $\beta$ be the middle-point map $P\times P\to P; (x,y)\mapsto \frac{x+y}{2}$. 
With the notation  $\pr_i$ for  the projection to the $i$-th coordinate, we have $\pr_1\beta^{-1}(z)=\pr_2\beta^{-1}(z) = P\cap \rho_z P$.
The diagonal $\triangle_{(P,\vec v)}$ is a section of $\beta$ since $P\cap \rho_z P$ is symmetric with respect to the point $z$. 

\medskip

Since the Loday realizations $K_\omega$ of the associahedra are positively oriented when the orientation vector $\vec v$ has decreasing coordinates, the above formula endows them with diagonal maps, which do not depend on the choice  the orienting vector.

\begin{proposition}\label{prop:IndepVect}
Let $\vec v$ and $\vec w$ be two vectors of $\RR^{n-1}$ with decreasing coordinates, then  \[\triangle_{(K_\omega, \vec v)}=\triangle_{(K_\omega, \vec w)}\ ,\]
for every weight $\omega$ of length $n$.
\end{proposition}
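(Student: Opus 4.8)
The plan is to show that the diagonal map $\triangle_{(K_\omega, \vec v)}$ depends only on the combinatorial data encoded by $\vec v$, namely the linear order it induces on the vertices via the $1$-skeleton. Observe that the formula in \cref{def:Diag} for $\triangle_{(K_\omega, \vec v)}(z)$ only uses $\vec v$ to pick out the top and bottom vertices of the intersection polytope $K_\omega \cap \rho_z K_\omega$ with respect to the order induced by $\vec v$. Thus it suffices to prove that for any $z\in K_\omega$, the poset structure on $\Ve\big(K_\omega\cap\rho_z K_\omega\big)$ induced by $\vec v$ and by $\vec w$ have the same maximum and the same minimum; in fact I will argue the stronger statement that these two posets coincide.

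First I would analyze the edges of the intersection polytope $Q_z := K_\omega \cap \rho_z K_\omega$. As already noted in the proof of \cref{prop:OrientationVector}, every edge direction of $Q_z$ is a nonzero multiple of a vector of the form $\vec e_{j_1} + \cdots + \vec e_{j_k}$ where $\vec e_j = (0,\ldots,1,-1,\ldots,0)$ has its $1$ in position $j$ and the indices $j_1 < \cdots < j_k$ are \emph{consecutive} integers in $\{1,\ldots,n-2\}$. Writing such a direction out, $\vec e_{j_1}+\cdots+\vec e_{j_k} = \vec f_{j_1,j_k+1}$ where $\vec f_{a,b}$ has a $1$ in position $a$, a $-1$ in position $b$, and zeros elsewhere. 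The key point is the sign computation: for \emph{any} vector $\vec u = (u_1,\ldots,u_{n-1})$ with strictly decreasing coordinates we have $\langle \vec f_{a,b}, \vec u\rangle = u_a - u_b > 0$ precisely because $a < b$. Hence the sign of $\langle \vec d, \vec u\rangle$ on each edge of $Q_z$ is determined by the combinatorics of the edge (which consecutive block of $\vec e_j$'s it is built from, and with which overall sign $\lambda$), and is the \emph{same} for $\vec u = \vec v$ and $\vec u = \vec w$. Therefore $\vec v$ and $\vec w$ orient every edge of the $1$-skeleton of $Q_z$ in the same direction, so they induce the \emph{same} poset on $\Ve(Q_z)$.

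It then remains to note that this common poset has a unique maximal element and a unique minimal element — equivalently that it is the vertex poset of a positively oriented polytope — which is exactly the content of $(K_\omega,\vec v)$ being positively oriented, established in \cref{prop:OrientationVector}: the orientation vector restricts to a genuine orientation of $Q_z$, so acyclicity together with connectedness of the $1$-skeleton of the polytope $Q_z$ forces a unique source $\bm(Q_z)$ and unique sink $\tp(Q_z)$, and these are read off from the edge-orientations alone. Since the edge-orientations agree for $\vec v$ and $\vec w$, we get $\bm\big(K_\omega\cap\rho_z K_\omega\big)$ and $\tp\big(K_\omega\cap\rho_z K_\omega\big)$ are the same whether computed with $\vec v$ or $\vec w$, for every $z\in K_\omega$, and hence $\triangle_{(K_\omega,\vec v)} = \triangle_{(K_\omega,\vec w)}$.

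The main obstacle I anticipate is making the claim ``edge directions of $Q_z$ are consecutive sums $\vec e_{j_1}+\cdots+\vec e_{j_k}$'' fully rigorous and uniform in $z$: one must check that for every $z$ the edges of the intersection polytope are cut out by the hyperplane $\sum x_i = \mathrm{const}$ together with facet-hyperplanes of $K_\omega$ or of $\rho_z K_\omega$, that the facet normals of $\rho_z K_\omega$ are (up to sign) the same $\vec n_F$ as for $K_\omega$ so the case analysis on $a_{p+1}=0$, $a_{p+q-1}=0$, $a_{p+q-1}=a_{p+1}$ still applies, and that combining several such constraints still yields a direction supported on a single consecutive block. This is essentially the computation already carried out in \cref{prop:OrientationVector}, so the work is to observe that the conclusion there — and in particular the sign of the pairing with the orientation vector — used nothing about $\vec v$ beyond the strict inequalities $v_1 > \cdots > v_{n-1}$, whence it transfers verbatim to $\vec w$.
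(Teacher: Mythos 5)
Your proof takes essentially the same approach as the paper's, which simply points back to the argument of \cref{prop:OrientationVector}: the edge directions of $K_\omega\cap\rho_z K_\omega$ all have the form $\lambda\left(\vec e_{j_1}+\cdots+\vec e_{j_k}\right)$ with $\lambda\neq 0$, and since $\langle\vec e_j,\vec u\rangle=u_j-u_{j+1}>0$ for \emph{every} strictly decreasing $\vec u$, the sign of $\langle\vec d,\vec u\rangle$ is $\operatorname{sign}(\lambda)$ independently of the choice of such $\vec u$. Hence $\vec v$ and $\vec w$ orient every edge of $K_\omega\cap\rho_z K_\omega$ identically, so they determine the same $\bm$ and $\tp$, and the two diagonals coincide. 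This is exactly what the paper intends by ``the argument given in the proof of \cref{prop:OrientationVector} shows\dots''.

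One small flaw worth flagging: your assertion that the indices $j_1<\cdots<j_k$ are \emph{consecutive} is neither needed nor, in fact, correct in general, and the paper itself makes no such claim. For a counterexample with $n=5$: the type~(2) hyperplane for $(p,q,r)=(0,3,2)$ has normal $(1,1,0,0)$, and the one for $(1,3,1)$ has normal $(0,1,1,0)$; taking the first from a facet of $K_\omega$ and the second from a facet of $\rho_z K_\omega$ (so that the nested-or-disjoint constraint of \cref{prop:PropertiesKLoday}, Point~(3), does not apply) and intersecting inside the type~(1) hyperplane yields direction $\vec e_1+\vec e_3$, a non-consecutive sum. Consequently the telescoping identity $\vec e_{j_1}+\cdots+\vec e_{j_k}=\vec f_{j_1,\,j_k+1}$ fails in general. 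The fix is immediate: argue positivity term-by-term, $\bigl\langle\sum_i\vec e_{j_i},\vec u\bigr\rangle=\sum_i\left(u_{j_i}-u_{j_i+1}\right)>0$, which is valid for arbitrary (not necessarily consecutive) indices. With that adjustment your argument is correct and agrees with the paper's.
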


\begin{proof}
The argument given in the proof of \cref{prop:OrientationVector} shows that the formula 
$\bigl(\bm(\K_\omega\cap \rho_z \K_\omega),\,  \allowbreak \tp(\K_\omega\cap \rho_z \K_\omega)\bigr)$ produces the same pair for any orientation vector with decreasing coordinates. 
\end{proof}

We denote by $\triangle_\omega : \K_\omega\to \K_\omega \times \K_\omega$ the diagonal map given by any such orientation vector. 
In the case of the standard weight $\omega=(1, \ldots, 1)$, we denote it simply by $\triangle_n : \K_n \to \K_n\times \K_n$. 

\begin{lemma}\label{Lemma:DiagOnFace}
Any face $F$ of a positively oriented polytope $(P,\vec v)$ is positively oriented once equipped with the orientation vector $\vec v$. Moreover, the two  diagonals agree: $\triangle_{(P, \vec v)}(z)=\triangle_{(F, \vec v)}(z)$, for any $z\in F$.
\end{lemma}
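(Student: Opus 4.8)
The plan is to prove both assertions of \cref{Lemma:DiagOnFace} by carefully relating the geometry of the intersection $F\cap \rho_z F$ (for $z\in F$) to that of $P\cap \rho_z P$.

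First I would observe that the orientation claim for $F$ is nearly immediate: an edge of $F$ is in particular an edge of $P$ (since $\La(F)\subset \La(P)$), so it is not perpendicular to $\vec v$; hence $(F,\vec v)$ is an oriented polytope. For positive orientation I need that $F\cap \rho_z F$ is oriented for every $z\in F$. Here $\rho_z$ denotes the reflection $2z-(\cdot)$ through the point $z$, and the key point is the \emph{local} identity $F\cap \rho_z F = (P\cap \rho_z P)\cap F$: indeed, if $w\in P\cap\rho_z P$ and $w\in F$, then $2z-w\in P$ as well, and since $z,w\in F$ the midpoint argument forces $2z-w$ to lie in $F$ too (a face of a convex polytope containing the endpoints of a segment contains the segment, and the reflected point lies on the line through $z$ and $w$ on the opposite side; because $F$ is a face cut out by a supporting hyperplane $H$ with $P$ on one side, and both $z$ and $w$ lie in $H$, the whole line through them lies in $H$, so $2z-w\in H\cap P=F$). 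Thus $F\cap\rho_z F$ is a face of the polytope $P\cap\rho_z P$, namely the one cut out by the same supporting hyperplane $H$. Since $(P\cap\rho_z P,\vec v)$ is oriented by hypothesis and any face of an oriented polytope is oriented (edges of the face being edges of the ambient polytope), we conclude $(F\cap\rho_z F,\vec v)$ is oriented. This proves $(F,\vec v)$ is positively oriented.

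For the agreement of the diagonals, fix $z\in F$. By the identity just established, $F\cap\rho_z F$ is the face of $P\cap\rho_z P$ cut out by the supporting hyperplane $H$ of the face $F\subset P$. I then need to compare $\bm$ and $\tp$ with respect to $\vec v$ taken in the subpolytope $F\cap\rho_z F$ versus in $P\cap\rho_z P$. The bottom vertex $\bm(P\cap\rho_z P)$ is $z$ itself shifted: more precisely, $P\cap\rho_z P$ is centrally symmetric about $z$, so $\bm$ and $\tp$ are reflections of one another through $z$, and the definition of $\triangle$ records the pair $(\bm,\tp)$. The claim $\triangle_{(P,\vec v)}(z)=\triangle_{(F,\vec v)}(z)$ amounts to: $\bm(P\cap\rho_z P)=\bm(F\cap\rho_z F)$ (the $\tp$ equality then follows by central symmetry, or by the same argument with $-\vec v$). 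This is where I expect the one genuine point: one must check that the $\vec v$-minimal vertex of the big intersection polytope already lies in the face $F\cap \rho_z F$, equivalently in the hyperplane $H$.

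The hard part will be exactly this last containment, and the way to handle it is to exploit that $z\in F$ lies on $H$ while $z$ is the center of symmetry of $P\cap\rho_z P$. Write $b=\bm(P\cap\rho_z P)$ and $t=\tp(P\cap\rho_z P)=2z-b$. Both $b$ and $t$ lie in $P$. If $F$ is cut out by $\langle \vec n,\cdot\rangle \le c$ with $F = \{w\in P: \langle\vec n,w\rangle = c\}$, then $\langle\vec n,b\rangle\le c$ and $\langle\vec n,t\rangle\le c$, while $\langle\vec n,b\rangle+\langle\vec n,t\rangle = 2\langle\vec n,z\rangle = 2c$; hence both inequalities are equalities, so $b,t\in F$, and therefore $b,t\in F\cap\rho_z F$. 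Since $F\cap\rho_z F\subset P\cap\rho_z P$, the $\vec v$-minimum over the smaller set is attained at a vertex with $\vec v$-value at least that of $b$; but $b$ itself lies in the smaller set, so the two minima coincide and equal $b$. The same reasoning with $\tp$ (or central symmetry) gives the other coordinate, completing the proof that $\triangle_{(F,\vec v)}(z)=\triangle_{(P,\vec v)}(z)$.
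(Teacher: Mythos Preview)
Your proof is correct, and its core is the same supporting-hyperplane argument the paper relies on. Note, however, that your final computation---$\langle\vec n,b\rangle\le c$, $\langle\vec n,t\rangle\le c$, and $\langle\vec n,b\rangle+\langle\vec n,t\rangle=2c$ forcing equality---uses nothing about $b$ and $t$ being extremal: it applies verbatim to any $w\in P\cap\rho_z P$ paired with $2z-w$. Hence you have actually shown $P\cap\rho_z P\subset F$, i.e.\ the full equality $P\cap\rho_z P=F\cap\rho_z F$, which is precisely the paper's one-line proof. Recognizing this lets you drop the intermediate ``$F\cap\rho_z F$ is a face of $P\cap\rho_z P$'' step and the separate treatment of $\bm$ and $\tp$: once the two intersection polytopes coincide, both assertions of the lemma are immediate.
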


\begin{proof}
This follows from the relation 
$P\cap \rho_z P= F\cap \rho_z F$, for any $z\in F$. 
\end{proof}

\subsection{Polytopal subdivision induced by the diagonal}\label{subsec:PolySubDiv}
The above formula for the diagonal $\triangle$ actually defines a morphism in the category $\PolySub$. 
To prove this property, we use some ideas coming from  the theory of fiber polytopes \cite{BilleraSturmfels92}, see also \cite[Chapter~9]{Ziegler95}.

\medskip

Let $\pi:  P\twoheadrightarrow Q$ be an affine projection of polytopes with 
$P\subset \RR^p$ and $Q\subset \RR^q$. 
We denote by $P_y\coloneqq P\cap \pi^{-1}(y)$ the fiber above $y\in Q$.
To any linear form $\psi\in (\RR^p)^{\ast}$, we associate a 
collection 
 $\sF^\psi\subset \La(P)$ as follows. 
We first factor the projection $\pi=\mathrm{pr}\circ \tilde{\pi}$ into the two  maps: 
\[\tilde{\pi}\coloneqq (\pi, \psi): P\twoheadrightarrow \tilde{Q}
\quad \text{and} \quad
 \mathrm{pr}: \tilde{Q}\twoheadrightarrow Q;\, (x, t)\mapsto x\ ,  
\quad \text{where}\quad  \tilde{Q}\coloneqq \big\{(\pi(y), \psi(y))\mid y\in P\big\}\subset \RR^{q+1}\ .
\]

Let $\La^{\downarrow}\big(\tilde Q\big) \subset \La\big(\tilde{Q}\big)$ be the subcomplex of lower faces, i.e. $F\in \La^{\downarrow}\big(\tilde Q\big)$ if and only if any $(y, t)\in F$ satisfies the equation 
$t = \min \psi(P_y)$. Since the preimage of any face by a projection of polytopes is again a face, this defines a collection 
\[\sF^\psi \coloneqq 
 \big\{P\cap  \tilde{\pi}^{-1}(F) \mid F\in \La^{\downarrow}\big(\tilde Q\big) 
\big\}
\subset \La(P)\ .\]
(This is in general not a polytopal complex since it is not stable under faces.)
In the case of the point $Q=\{\ast\}$, the unique face contained in $\sF^\psi$ is by definition 
\[P^\psi\coloneqq \left\{x\in P \mid \psi (x) = \min \psi(P) \right\}\ .\]

\begin{proposition}\label{prop:CoherentSubdivision}
The collection $\sF^\psi \subset \La(P)$  satisfies the following properties.
\begin{enumerate}
\item The polytopal complex $\pi(\sF^\psi)\coloneqq\left\{\pi(F)\mid F\in \sF^\psi \right\}$ is a polytopal subdivision of $Q$.
\item For any $y\in Q$, the fiber satisfies  $(\sF^\psi)_y \coloneqq\pi^{-1}(y) \cap\left| \sF^\psi\right|=\left(P_y\right)^\psi$.
\end{enumerate}
\end{proposition}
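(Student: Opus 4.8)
The statement to prove is \cref{prop:CoherentSubdivision}: the collection $\sF^\psi$ obtained from an affine projection $\pi\colon P\twoheadrightarrow Q$ and a linear form $\psi$ projects to a polytopal subdivision $\pi(\sF^\psi)$ of $Q$, and the fibers of $|\sF^\psi|$ are exactly the $\psi$-minimal faces $(P_y)^\psi$ of the individual fibers. This is the standard ``fiber polytope / coherent subdivision'' package of Billera--Sturmfels, specialized to a one-dimensional lift $\psi$; I would organize the argument around the factorization $\pi=\pr\circ\tilde\pi$ with $\tilde\pi=(\pi,\psi)\colon P\twoheadrightarrow\tilde Q\subset\RR^{q+1}$ that is already set up in the excerpt.

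\textbf{Step 1: lower faces of $\tilde Q$ subdivide $Q$.}
First I would establish the purely polytopal fact that $\pr(\La^{\downarrow}(\tilde Q))$ is a polytopal subdivision of $Q=\pr(\tilde Q)$. The lower faces of $\tilde Q$ (those whose outer normals have negative last coordinate, together with the ``vertical'' faces one gets as their own faces) form a subcomplex of $\La(\tilde Q)$ whose underlying set is the graph of the convex function $y\mapsto \min\{t : (y,t)\in\tilde Q\}$ on $Q$; since $\pr$ restricted to $|\La^{\downarrow}(\tilde Q)|$ is a homeomorphism onto $Q$ carrying this subcomplex to a polytopal complex, we get a polytopal subdivision of $Q$. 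One must check that $\pr$ is injective on each lower face (true because a lower face contains no vertical segment: a vertical segment would have $t$ non-constant while $\min\psi(P_y)$ is determined by $y$) and that the images fit together along common faces (intersections of lower faces map to intersections of images). This is classical; I would cite \cite[Chapter~9]{Ziegler95} and keep the verification short.

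\textbf{Step 2: transport along $\tilde\pi$.}
Next, the map $F\mapsto P\cap\tilde\pi^{-1}(F)$ is, by construction, a bijection between $\La^{\downarrow}(\tilde Q)$ and $\sF^\psi$, and $\pi(P\cap\tilde\pi^{-1}(F))=\pr(F)$ because $\tilde\pi$ surjects onto $\tilde Q$ and $\pi=\pr\circ\tilde\pi$. Hence $\pi(\sF^\psi)=\pr(\La^{\downarrow}(\tilde Q))$ as collections of polytopes, and Point~(1) follows from Step~1. Here one uses the elementary fact, already invoked in the excerpt, that the preimage of a face of $\tilde Q$ under the projection $\tilde\pi$ of polytopes is a face of $P$, so $\sF^\psi\subset\La(P)$ genuinely consists of faces.

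\textbf{Step 3: the fiber identity.}
For Point~(2), fix $y\in Q$. By definition $(\sF^\psi)_y=\pi^{-1}(y)\cap|\sF^\psi|=\bigcup_{F\in\La^{\downarrow}(\tilde Q)}\bigl(P\cap\tilde\pi^{-1}(F)\cap\pi^{-1}(y)\bigr)$. A point $x\in P_y$ lies in this set iff $(\,y,\psi(x)\,)=\tilde\pi(x)$ lies in some lower face of $\tilde Q$, and a point of $\tilde Q$ lies in a lower face iff its last coordinate is minimal among points of $\tilde Q$ over the same $Q$-coordinate, i.e. iff $\psi(x)=\min\{t:(y,t)\in\tilde Q\}=\min\psi(P_y)$. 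Thus $x\in(\sF^\psi)_y$ iff $\psi(x)=\min\psi(P_y)$, which is exactly the condition defining $(P_y)^\psi$. (For the degenerate case $Q=\{\ast\}$ this reads $|\sF^\psi|=P^\psi$, consistent with the definition given just before the proposition.)

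\textbf{Expected main obstacle.}
The conceptual content is all in Step~1: checking carefully that the lower faces of $\tilde Q$ really assemble into a polytopal complex subdividing $Q$ — in particular that $\pr$ is a bijection on each lower face and that face-intersections are respected — rather than merely a set-theoretic cover. Everything after that is bookkeeping through the factorization $\pi=\pr\circ\tilde\pi$. I would therefore spend most of the written proof on Step~1 (or quote it cleanly from Ziegler), and dispatch Steps~2 and~3 in a few lines each.
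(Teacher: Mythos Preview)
Your proposal is correct and follows essentially the same approach as the paper: the paper's proof of (1) is the one-line observation that $\pi(\sF^\psi)=\pr\bigl(\La^{\downarrow}(\tilde Q)\bigr)$ is a polytopal subdivision of $Q$ because $\pr$ restricted to the lower faces is a linear homeomorphism, and its proof of (2) is simply ``clear from the definition''. Your Steps~1--3 unpack exactly these two lines with more care (the injectivity of $\pr$ on lower faces, the face-intersection compatibility, and the explicit fiber computation), so the argument is the same, just more detailed.
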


\begin{proof}\leavevmode
\begin{enumerate}
\item By definition $\pi\left(\sF^\psi\right) = \tilde\pi\big(\La^{\downarrow}\big(\tilde Q\big)\big)$\ . The right-hand side defines a polytopal subdivision of $Q$, since the restriction of the map  $\pr$ is a linear homeomorphism from a polytopal complex.
\item This is clear from the definition.
\end{enumerate}
\end{proof}

\begin{definition}[Coherent and tight subdivisions]\label{def:Coherent}\leavevmode
\begin{enumerate}
\item A subcollection $\sF\subset\La(P)$ is called a \emph{coherent subdivision of $Q$} when it is of the form $\sF^\psi$ for some $\psi\in (\RR^p)^{\ast}$.
\item A coherent subdivision $\sF$ is called \emph{tight} when, the faces $F$ and $\pi(F)$ have the same dimension, for every $F\in \sF$.
\end{enumerate}
\end{definition}

A coherent subdivision $\sF$ is tight if and only if, for any $y\in Q$, the fiber $\sF_y = (P_y)^\psi$ is a point, by Point~{(2)} of \cref{prop:CoherentSubdivision}. (This is also equivalent to $\sF$ being a polytopal complex.)
Therefore a tight coherent subdivision can be identified with the unique piecewise-linear section of $\pi|_P$ which minimizes $\psi$ in each fiber.
By the Point~{(1)} of \cref{prop:CoherentSubdivision}, this section is a morphism of the category $\PolySub$.

\medskip

We apply these results to the projection 
\[\beta : P\times P\to P ; (x, y)\mapsto \frac{x+y}{2}\ .\]
and to the linear form $\psi(x, y)\coloneqq \ang{x-y, \vec v}$ in order to obtain the following proposition.

\begin{proposition}\label{prop:DiaginPoly}
If $(P, \vec v)$ is a positively oriented polytope, the diagonal map $\triangle_{(P, \vec v)}: P\to P\times P$ is a morphism in the category $\PolySub$. 
\end{proposition}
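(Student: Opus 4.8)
The plan is to reduce Proposition~\ref{prop:DiaginPoly} to the fiber-polytope machinery developed just above, by showing that the linear form $\psi(x,y) \coloneqq \ang{x-y,\vec v}$ produces a \emph{tight} coherent subdivision of $P$ relative to the middle-point projection $\beta : P\times P \to P$, and that the associated piecewise-linear section is precisely $\triangle_{(P,\vec v)}$. First I would observe that for $z \in P$ the fiber $\beta^{-1}(z)$ was already identified in the discussion after \cref{def:Diag}: one has $\pr_1\beta^{-1}(z) = \pr_2\beta^{-1}(z) = P\cap\rho_z P$, so that $\beta^{-1}(z) \cong P\cap\rho_z P$ via either projection, with the identification $(x,y)\mapsto x$ sending $\psi$ to $\ang{2x-2z,\vec v} = 2\ang{x-z,\vec v}$. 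Minimizing $\psi$ over this fiber therefore amounts to minimizing $\ang{\cdot,\vec v}$ over $P\cap\rho_z P$, whose minimizer is exactly $\bm(P\cap\rho_z P)$ (and the maximizer, equivalently the $\psi$-minimizer in the second coordinate, is $\tp(P\cap\rho_z P)$). Thus the section $z\mapsto (P_z)^\psi$ coincides with $\triangle_{(P,\vec v)}(z) = \bigl(\bm(P\cap\rho_z P),\,\tp(P\cap\rho_z P)\bigr)$.

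The key point that remains is to verify that the subdivision $\sF^\psi$ is \emph{tight}, i.e.\ that for every $z\in P$ the fiber $(P_z)^\psi$ is a single point; by the remark following \cref{def:Coherent}, tightness is exactly what guarantees that the resulting section is a morphism in $\PolySub$ (via Point~(1) of \cref{prop:CoherentSubdivision}). Tightness fails precisely when $\ang{\cdot,\vec v}$ attains its minimum on $P\cap\rho_z P$ along an edge (or higher face), that is, when some edge of $P\cap\rho_z P$ is perpendicular to $\vec v$. But this is ruled out by the hypothesis that $(P,\vec v)$ is \emph{positively oriented}: by definition the intersection polytope $(P\cap\rho_z P, \vec v)$ is oriented, meaning no edge of it is perpendicular to $\vec v$. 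Hence $\ang{\cdot,\vec v}$ is injective on the vertex set of each fiber $P\cap\rho_z P$, its minimum is attained at a unique vertex, and $(P_z)^\psi$ is a point; the same argument (or applying it to $-\vec v$) handles the second coordinate.

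Assembling these observations: $\psi$ is a genuine linear form on $\RR^p$ with $p = 2n$ (writing $P\subset\RR^n$), the projection $\beta$ is affine and surjective, so \cref{prop:CoherentSubdivision} applies and gives that $\pi(\sF^\psi) = \beta(\sF^\psi)$ is a polytopal subdivision of $P$ with each fiber $(\sF^\psi)_z = (P_z)^\psi$; by the tightness just established each such fiber is a point, so $\sF^\psi$ is a polytopal complex and the induced piecewise-linear section $P\to P\times P$ is a morphism in $\PolySub$ by the discussion after \cref{def:Coherent}. Since that section is $\triangle_{(P,\vec v)}$, we are done. The only genuinely delicate step is the tightness verification, but it is essentially definitional once one recognizes that "positively oriented" was engineered precisely to force the $\psi$-minimum on every intersection fiber to be a vertex rather than a positive-dimensional face; everything else is bookkeeping identifying the fiber, the form, and the section with the objects appearing in \cref{def:Diag}.
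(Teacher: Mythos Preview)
Your proposal is correct and follows essentially the same route as the paper's proof: both apply the fiber-polytope framework to the projection $\beta$ and the linear form $\psi(x,y)=\ang{x-y,\vec v}$, identify the $\psi$-minimizer on each fiber with $\bigl(\bm(P\cap\rho_z P),\,\tp(P\cap\rho_z P)\bigr)$, and invoke positive orientation to guarantee uniqueness (tightness). You spell out the tightness step a bit more explicitly than the paper does, but the argument is the same.
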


\begin{proof}
For any $z\in P$, the fiber $\beta^{-1}(z)$ is the set of pairs $(x, y)\in P\times P$ such that $x+y = 2z$. Since the sum of $x$ and $y$ is constant, $\psi(x, y)$ is minimized in the fiber when $\ang{x, \vec v}$ is minimized, or equivalently, $\ang{y, \vec v}$ is maximized. On both coordinates, $\beta^{-1}(z)$ projects down to the intersection $P\cap (2z-P)$, which is oriented by definition. 
So the fiber $\beta^{-1}(z)$ admits a unique minimal element  $\left(\bm(P\cap \rho_z P),\,  \tp(P\cap \rho_z P)\right)$ with respect to $\psi$. In the end, the section defined by the tight coherent subdivision agrees with the definition of the diagonal map $\triangle_{(P, \vec v)}$ given in \cref{def:Diag}.
\end{proof} 

\begin{corollary}\label{cor:DIMENSION}
The image of $\triangle_{(P,\vec v)}$ is contained in $\sk_n (P\times P)$. In particular, if one of two components of $\triangle (z)$ lies in the interior of $P$, then the other component is either $\tp P$ or $\bm P$.
\end{corollary}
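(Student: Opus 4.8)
The plan is to read both statements off the fiber-polytope description of the diagonal obtained in the proof of \cref{prop:DiaginPoly}: there, $\triangle_{(P,\vec v)}$ is identified with the piecewise-linear section of the projection $\beta\colon P\times P\to P$ determined by the \emph{tight} coherent subdivision $\sF^\psi$, where $\psi(x,y)=\ang{x-y,\vec v}$. For every $z\in P$ the point $\triangle_{(P,\vec v)}(z)$ is the (unique, by tightness) point of the fiber $(\sF^\psi)_z$ --- this is exactly what the proof of \cref{prop:DiaginPoly} shows --- so $\triangle_{(P,\vec v)}(z)$ lies in the underlying set $\abs{\sF^\psi}=\bigcup_{F\in\sF^\psi}F$, which is a union of \emph{faces} of $P\times P$.

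Granting this, the first assertion is immediate. If $F\in\sF^\psi$, then tightness (\cref{def:Coherent}) gives $\dim F=\dim\beta(F)\le\dim P=n$, since $\beta(F)\subseteq P$. Thus $F$ is a face of $P\times P$ of dimension at most $n$, hence $F\subseteq\sk_n(P\times P)$; taking the union over $F\in\sF^\psi$ shows that the image of $\triangle_{(P,\vec v)}$ is contained in $\sk_n(P\times P)$.

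For the ``in particular'' clause, suppose the first component $x\coloneqq\bm(P\cap\rho_zP)$ of $\triangle_{(P,\vec v)}(z)$ lies in the interior of $P$; the other case is entirely symmetric. By definition $x$ minimizes the affine functional $\ang{\,\cdot\,,\vec v}$ over $P\cap\rho_zP$. Since $x$ is an interior point of $P$, some Euclidean ball about $x$ is contained in $P$, so $x$ also minimizes $\ang{\,\cdot\,,\vec v}$ on a neighborhood of $x$ inside $\rho_zP$; because $\rho_zP$ is convex and $\ang{\,\cdot\,,\vec v}$ is affine, a local minimum over $\rho_zP$ is automatically a global one, whence $x=\bm(\rho_zP)=2z-\tp P$. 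As $\triangle_{(P,\vec v)}(z)$ is a section of $\beta$, its second component equals $2z-x=\tp P$. Symmetrically, if the second component lies in the interior of $P$, it equals $\tp(\rho_zP)=2z-\bm P$ and the first component is $\bm P$.

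Since the first assertion is just bookkeeping once the tight subdivision $\sF^\psi$ is available, the step I expect to carry the real content is the local-to-global minimization in the ``in particular'' clause. That is where convexity of $\rho_zP$ and positive orientedness --- which is what makes $\bm$ and $\tp$ well defined and unique, both on $\rho_zP$ and on every intersection $P\cap\rho_zP$ --- are genuinely needed; it is also what strengthens the purely formal consequence of the skeleton bound, namely that the other component is \emph{some} vertex of $P$, to the sharp statement that it is the extreme vertex $\tp P$ or $\bm P$.
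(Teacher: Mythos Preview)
Your proof is correct. The paper states \cref{cor:DIMENSION} without proof, as an immediate consequence of \cref{prop:DiaginPoly}; your argument for the first assertion is exactly the intended one (tightness forces $\dim F=\dim\beta(F)\le\dim P$ for every $F\in\sF^\psi$), and your local-to-global minimization argument for the second assertion cleanly supplies the detail the paper leaves implicit, correctly strengthening the skeleton bound's consequence ``the other component is some vertex'' to ``it is $\tp P$ or $\bm P$''.
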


\begin{remark}
Notice that the diagonal map $\triangle_{(P,\vec v)}$ is fiber-homotopic to the usual diagonal $x\mapsto (x, x)$\ .
\end{remark}

We denote by $\sF_{(P, \vec v)}$ the corresponding tight coherent subdivision of $P$ and by $\beta\left(\sF_{(P, \vec v)}\right)$ the  polytopal subdivision of $P$. 
In the case of the Loday realizations of the associahedra, \cref{prop:IndepVect} shows that they do not depend on the orientation vector (with decreasing coordinates), so we use the simple notations $\sF_{\omega}$ and $\beta\left(\sF_\omega\right)$.

\medskip

There is a simple way for drawing this polytopal subdivision: one glues together two copies of $P$ along an axis of direction $\vec v$, then one draws all the middle points of pairs of points coming from some choices of a  face on the left-hand side copy and a face on the right-hand side copy, see \cref{Fig:PolySubBary}. In the case of the associahedra, these choices are given by the magical formula of \cref{sec:MagicalFormula}. 
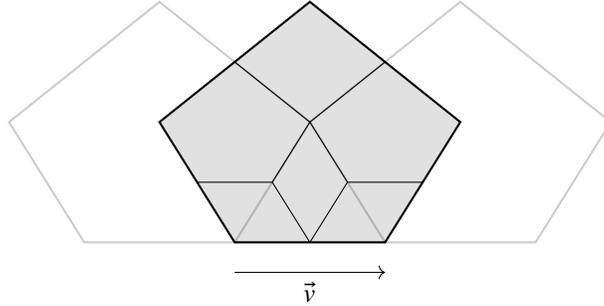
\begin{figure}[h]
\[
\begin{tikzpicture}[xscale=0.5, yscale=0.4]
\draw[thick] (-4,0)--(0,4)--(4,0)--(2,-4)--(-2,-4)--cycle;
\draw[fill, opacity=0.12] (-4,0)--(0,4)--(4,0)--(2,-4)--(-2,-4)--cycle;
\draw (-2,2)--(0,0)--(-1,-2)--(-3,-2);
\draw (2,2)--(0,0)--(1,-2)--(3,-2);
\draw (-1,-2)--(0,-4)--(1,-2);

\draw[thick,opacity=0.2] (-8,0)--(-4,4)--(0,0)--(-2,-4)--(-6,-4)--cycle;
\draw[thick,opacity=0.2] (8,0)--(4,4)--(0,0)--(2,-4)--(6,-4)--cycle;

\draw[->] (-2,-5)--(2,-5);
\draw (0,-5) node[below] {$\vec v$};

\end{tikzpicture}
\]
\caption{Example of polytopal subdivision.}\label{Fig:PolySubBary}
\end{figure}

\begin{example} This approach allows us to recover the classical cases of the simplices and the cubes. 
\begin{enumerate}
\item The classical Alexander--Whitney approximation to the diagonal of simplices $\mathrm{AW}_n : \Delta^n \to \Delta^n\times \Delta^n$ can be recovered with the following  geometric realizations 
\[\Delta^n\coloneqq\conv\big\{(1, \ldots, 1, 0,\ldots, 0)\in \RR^n\big\}=
\big\{(z_1, \ldots , z_n)\in \RR^n\mid 
1 \geq z_1\geq \cdots \geq z_n \geq 0
\big\}\ .
\]
As usual, we denote by $\mathrm{i}$ the point of $\RR^n$ of coordinates $(1, \ldots, 1, 0,\ldots, 0)$, 
where $1$'s appear $i$ times, and we consider the vector  $\vec n\coloneqq (1, \ldots, 1)$ as above. 
The same argument as in the proof of \cref{prop:OrientationVector} shows that $\big(\Delta^n, \vec n \big)$ is positively oriented.
For $z=(z_1, \ldots , z_n)$ satisfying 
$1 \geq z_1\geq \cdots \geq z_i \geq \frac{1}{2} \geq z_{i+1}\geq \cdots \geq z_n \geq 0$, 
One can easily see that  the minimum of $\psi$ is reached by
\[
\triangle_{\big(\Delta^n, \vec n\big)}(z)=
\big(( 
2z_1-1, \ldots, 2z_i-1, 0, \ldots, 0)\, , 
(
1, \ldots, 1, 
2z_{i+1}, \ldots, 2z_n
)\big)\ .
\]
We consider the faces $\Delta^{\{\mathrm{0}, \ldots, \mathrm{i} \}}=
\big\{(z_1, \ldots , z_i, 0, \ldots, 0)\in \RR^n\mid 
1 \geq z_1\geq \cdots \geq z_i \geq 0
\big\}$ and 
$\Delta^{\{\mathrm{i}, \ldots, \mathrm{n} \}}=
\big\{(1, \ldots, 1, z_{i+1}, \ldots , z_n)\in \RR^n\mid 
1 \geq z_{i+1}\geq \cdots \geq z_n \geq 0
\big\}$\ .
The tight coherent subdivision of $\Delta^n$ is  
equal to 
\[\sF_{\big(\Delta^n, \vec n\big)}=
\left\{\Delta^{\{\mathrm{0}, \ldots, \mathrm{i} \}} \times \Delta^{\{\mathrm{i}, \ldots, \mathrm{n} \}} \mid 
0 \leq i \leq n
\right\}\ ,
\]
which recovers the (simplicial) Alexander--Whitney map of \cite{EilenbergZilber53, EilenbergMacLane54}. 

\item The approximation of the diagonal $C^n \to C^n\times C^n$ of the cube $C^n\coloneqq [0,1]^n$ used by Jean-Pierre Serre in \cite{Serre51} can easily be recovered by the present method. First, it is a positively oriented polytope once equipped with the orientation vector 
$\vec n$. Since an $n$-dimensional cube is a product of $n$ intervals, the various formulas are straightforward. 
\end{enumerate}
\end{example}

Notice that these two examples work particularly well because we do not need to stretch the faces and their combinatorial complexity is very limited: any face appearing here is \emph{affinely equivalent} to a lower dimensional polytope of the respective family. These properties do not hold anymore for the Loday realizations of the associahedra; such a difficulty is omnipresent in the rest of this paper. 

\section{Operad structure on Loday realizations}

\subsection{Cellular properties}
\begin{proposition}\label{Prop:NormalFan}
Suppose $P$ and $Q$ are normally equivalent polytopes, 
i.e. with same normal fans $\No_P=\No_Q$. 
If $P$ and $Q$ are well-oriented by the same orientation vector $\vec v$, 
then the tight coherent subdivisions 
$\sF_{(P, \vec v)}$ and $\sF_{(Q, \vec v)}$ are  combinatorially equivalent in a canonical way. 
\end{proposition}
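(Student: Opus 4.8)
The plan is to reduce the statement to a combinatorial comparison by exploiting the fact that the tight coherent subdivision $\sF_{(P,\vec v)}$ is determined entirely by the normal fan together with the orientation vector. Recall from \cref{subsec:PolySubDiv} that $\sF_{(P,\vec v)}$ arises from the projection $\beta\colon P\times P\to P$ and the linear form $\psi(x,y)=\langle x-y,\vec v\rangle$; concretely, over a point $z\in P$ the fiber of the section is $\bigl(\bm(P\cap\rho_z P),\,\tp(P\cap\rho_z P)\bigr)$, and the cells of $\sF_{(P,\vec v)}$ are the faces $F\times G$ of $P\times P$ such that $F$ is the $\bm$-face and $G$ the $\tp$-face of the intersection $P\cap\rho_z P$ for $z$ ranging over a full-dimensional region of $P$. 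First I would observe that which pair $(F,G)$ of faces of $P$ occurs, and how the corresponding region $\beta(F\times G)$ sits inside $P$, depends only on the combinatorics of how faces of $P$ and of a translate $\rho_z P$ intersect, and this in turn is governed by the normal fan $\No_P$: a face $F$ of $P$ and a face $G$ of $\rho_z P = 2z-P$ meet in the relative interior of a common face of the intersection precisely when the corresponding normal cones $\No_P(F)$ and $-\No_P(G)$ intersect in their relative interiors, a condition phrased purely in terms of $\No_P$.

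Next I would make the correspondence explicit. Since $\No_P=\No_Q$, there is a canonical bijection $\La(P)\cong\La(Q)$, $F\mapsto F'$, preserving inclusions and preserving which facets (hence which normal vectors) cut out each face. The claim to establish is that $F\times G\in\sF_{(P,\vec v)}$ if and only if $F'\times G'\in\sF_{(Q,\vec v)}$, and that the face poset of $\beta(\sF_{(P,\vec v)})$ matches that of $\beta(\sF_{(Q,\vec v)})$ under this bijection. For the membership criterion I would translate ``$F$ is the bottom vertex-face and $G$ the top vertex-face of $P\cap\rho_z P$ for some $z$'' into the statement that $-\vec v$ lies in the normal cone (inside the intersection) of $F$ and $+\vec v$ lies in the normal cone of $G$; these normal cones of the intersection polytope are intersections of translates of cones of $\No_P$, so the condition only sees $\No_P$ and $\vec v$. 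The region $\beta(F\times G)$ is then described as $\tfrac12(F+G)$ intersected appropriately, and one checks it is nonempty and full-dimensional iff the analogous cone-intersection condition holds — again a statement about $\No_P$. Because $P$ and $Q$ are \emph{well-oriented} by the same $\vec v$ (so in particular no edge is perpendicular to $\vec v$, which is exactly a condition on the rays of $\No_P=\No_Q$ relative to $\vec v$), the genericity hypotheses needed for the subdivision to be tight hold simultaneously for both.

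I would then assemble these observations: the assignment $F\times G\mapsto F'\times G'$ is a bijection between the maximal cells of $\sF_{(P,\vec v)}$ and those of $\sF_{(Q,\vec v)}$, it extends to all cells because both subdivisions are polytopal complexes whose cells are faces of the respective products $P\times P$, $Q\times Q$, and it respects inclusions because the normal-fan bijection does. Pushing forward along $\beta$, the incidence structure of $\beta(\sF_{(P,\vec v)})$ is read off from that of $\sF_{(P,\vec v)}$ (two cells of the subdivision of $P\times P$ have images meeting in a common face iff the corresponding cone-intersection data agree), so it too transfers. This yields the asserted canonical combinatorial equivalence.

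The main obstacle I expect is the careful bookkeeping of the normal cones of the \emph{intersection} polytope $P\cap\rho_z P$: one must verify that the face structure of this intersection, and in particular which face is $\bm$ and which is $\tp$ with respect to $\vec v$, is locally constant in $z$ on a subdivision of $P$ that depends only on $\No_P$, and that the combinatorial type of the subdivision cell is recovered from the pair of normal cones. This is essentially the content that the fiber polytope $\Sigma_\beta(P\times P\to P)$ depends only on the normal fan (cf.\ \cite[Chapter~9]{Ziegler95}), specialized to the diagonal situation, but spelling it out so that the bijection is manifestly \emph{canonical} — independent of any choice beyond $\No_P$ and $\vec v$ — requires some care. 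Everything else is a routine transfer of combinatorial data along the normal-fan isomorphism.
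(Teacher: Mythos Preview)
Your approach is correct in outline and would lead to the same conclusion, but it is substantially more circuitous than the paper's. You propose to analyze the face structure of the intersection polytope $P\cap\rho_zP$ as $z$ varies, identify its bottom and top vertices in terms of normal cones of the intersection, and then argue that the chamber decomposition of $P$ into regions of constant combinatorial type depends only on $\No_P$. This is true, but the ``main obstacle'' you flag --- tracking the normal cones of $P\cap\rho_zP$ and showing the relevant data are locally constant on a $\No_P$-determined subdivision --- is real work.

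The paper bypasses the intersection polytope entirely. It characterizes $(x,y)\in\im\triangle_P$ directly as the absence of an \emph{improving direction}: there is no $\vec w$ with $\langle\vec v,\vec w\rangle>0$ such that $x-\varepsilon\vec w\in P$ and $y+\varepsilon\vec w\in P$ for some $\varepsilon>0$. Via the polar cone theorem, the set of directions $\vec w$ with $y+\varepsilon\vec w\in P$ for some $\varepsilon>0$ is exactly $\No_P(G)^\star$ when $y\in\mathring G$, so the criterion becomes: no $\vec w$ with $\langle\vec v,\vec w\rangle>0$ lies in $-\No_P(F)^\star\cap\No_P(G)^\star$. This is visibly a condition on $\No_P$ and $\vec v$ alone, and the combinatorial equivalence $\Phi\times\Phi$ follows immediately. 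What the paper's route buys is that one never has to understand the faces of $P\cap\rho_zP$ or how they vary with $z$; the minimality condition defining $\triangle_P$ is rephrased pointwise in normal-fan language in one step. Your route would recover the same criterion after more bookkeeping, and the statement about normal cones of the intersection that you sketch (faces of $P$ and $\rho_zP$ meeting when their normal cones meet in relative interiors) would need to be formulated and proved carefully.
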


\begin{proof}
Normal equivalence of polytopes induces  combinatorial equivalence $\Phi: \La(P)\cong\La(Q)$.
By the definition of the diagonal map, a pair of points $(x, y)\in P\times P$ is contained in the image $\im \triangle_P$ if and only if it satisfies the following condition:
there exists no vector $\vec w$ and positive number $\varepsilon >0$ with $\ang{\vec v, \vec w}>0$ and $x-\varepsilon \vec w, y+\varepsilon\vec w\in P$.
The latter conditions can be restated in terms of normal cones as follows. Recall that,   
for any subset $C \subset \RR^n$, the polar cone $C^\star$ of $C$ is defined by
\[
C^\star \coloneqq \left\{ x\in \RR^n\mid \forall x\in C, \  \ang{x, y} \leq 0 \right\} .
\]
The polar cone theorem asserts that $C^{\star \star}$ is the smallest closed convex cone which contains $C$. 
By definition, $(P-y)^\star$ is the normal cone $\No_P(G)$ corresponding to the face which satisfies $y\in \mathring G$.
Applying the polar cone theorem to $C= P-y$, we obtain that $(P-y)^{\star\star} = \No_P(G)^\star$ is the set of vectors $\vec w$ such that $y+\varepsilon\vec w \in P$ for some $\varepsilon >0$. 
Therefore if $x\in \mathring F$ and $y\in \mathring G$, the condition for $(x, y)\in \im \triangle_P$ is that 
there exists no vector $\vec w$ such that 
$\ang{\vec v, \vec w}>0$ and $\vec w\in -\No_P(F)^\star\cap \No_P(G)^\star$.
Since this condition depends only on the normal fan, 
the map $\Phi\times \Phi: \La(P\times P)\to \La(Q\times Q)$ induces the canonical combinatorial equivalence $\sF_{(P, \vec v)}\cong \sF_{(Q, \vec v)}$.
\end{proof}

\begin{corollary}\label{Cor:SameDiag}
Let $\omega$ and $\theta$ be two weights of same length.
The two polytopal subdivisions $\beta\left(\sF_\omega \right)$ and $\beta\left(\sF_\theta \right)$ of $\K_\omega$ and $\K_\theta$ respectively are combinatorially equivalent, i.e. labelled by the same pairs of planar trees. 
\end{corollary}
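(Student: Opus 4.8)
The plan is to deduce \cref{Cor:SameDiag} directly from \cref{Prop:NormalFan} together with \cref{prop:PropertiesKLoday} and \cref{prop:OrientationVector}. First I would observe that the two Loday realizations $\K_\omega$ and $\K_\theta$ have the same normal fan. Indeed, by Point~(3) of \cref{prop:PropertiesKLoday}, each $\K_\omega$ is cut out by the hyperplane of Point~(1) and the half-spaces of Point~(2), and the \emph{inequalities} in Point~(2) all have the same normal vectors $\vec n_F = (0,\dots,0,1,\dots,1,0,\dots,0)$, independent of the weight; only the right-hand constants depend on $\omega$. Hence the supporting hyperplanes of the facets of $\K_\theta$ are parallel translates of those of $\K_\omega$, and since by Point~(4) both polytopes have the same face lattice $(\mathrm{PT}_n,\subset)$ with facets indexed by the same two-vertex trees, the two polytopes are normally equivalent: $\No_{\K_\omega}=\No_{\K_\theta}$.

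Next I would invoke \cref{prop:OrientationVector}: for any vector $\vec v$ with strictly decreasing coordinates, both $(\K_\omega,\vec v)$ and $(\K_\theta,\vec v)$ are well-oriented realizations of the associahedron, inducing the Tamari lattice on vertices. So we are precisely in the hypotheses of \cref{Prop:NormalFan}, with $P=\K_\omega$, $Q=\K_\theta$, and the common orientation vector $\vec v$. That proposition then yields a canonical combinatorial equivalence $\sF_\omega \cong \sF_\theta$ of the tight coherent subdivisions, induced by $\Phi\times\Phi$ where $\Phi\colon\La(\K_\omega)\cong\La(\K_\theta)$ is the normal-equivalence isomorphism. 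Applying $\beta$ (which only records the pair of faces determining each cell of the subdivision) gives a combinatorial equivalence $\beta(\sF_\omega)\cong\beta(\sF_\theta)$.

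Finally I would translate the combinatorial equivalence into the stated labelling. By Point~(4) of \cref{prop:PropertiesKLoday}, $\La(\K_\omega)$ and $\La(\K_\theta)$ are both canonically identified with the poset $(\mathrm{PT}_n,\subset)$ of planar trees, and the isomorphism $\Phi$ is the identity under these identifications (it sends the facet labelled $c_{p+1+r}\circ_{p+1}c_q$ of $\K_\omega$ to the facet with the same label of $\K_\theta$, since both are determined by the same inequality index $(p,q,r)$). Therefore a cell of $\beta(\sF_\omega)$ labelled by a pair of planar trees is carried to the cell of $\beta(\sF_\theta)$ labelled by the \emph{same} pair, which is exactly the assertion of \cref{Cor:SameDiag}.

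The only genuine point requiring care -- and the one I would treat as the main obstacle -- is verifying that $\K_\omega$ and $\K_\theta$ really do have identical normal fans rather than merely combinatorially equivalent ones; this rests on the uniformity of the facet normals $\vec n$ and $\vec n_F$ in Points~(1)--(2) of \cref{prop:PropertiesKLoday}, so that changing the weight only translates facets and never rotates them. Once that is in hand, everything else is a formal application of \cref{Prop:NormalFan} and the canonical tree-labelling of faces, with no further computation needed.
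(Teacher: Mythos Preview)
Your argument is correct and follows essentially the same route as the paper: normal equivalence of $\K_\omega$ and $\K_\theta$ from the weight-independent facet normals in Points~(1)--(3) of \cref{prop:PropertiesKLoday}, then an application of \cref{Prop:NormalFan} with a common orientation vector from \cref{prop:OrientationVector}. The additional care you take in identifying $\Phi$ with the identity on planar-tree labels is a welcome elaboration, but the core strategy matches the paper's one-line proof.
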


\begin{proof}
This is a direct corollary of Point~(3) of \cref{prop:PropertiesKLoday} and  \cref{Prop:NormalFan}. 
\end{proof}

\cref{prop:IndepVect} and \cref{Cor:SameDiag} show that the type of faces composing the polytopal subdivision 
of the Loday realizations of the associahedra are intrinsic: they depend neither on the orientation vector (with decreasing coordinates) nor on the weight. From now on, we simply denote them by $\sF_n\subset \mathrm{PT}_n\times \mathrm{PT}_n$ and $\beta\left(\sF_n\right)$. Their description will be the subject of the magical formula given in \cref{sec:MagicalFormula}. 

\subsection{Pointwise properties}
We can enhance the above one-to-one correspondence of polytopal subdivisions to the pointwise level using the   isomorphism in the category $\PolySub$. 

\begin{proposition}\label{prop:Transition}
Let $(P, \vec v)$ and $(Q,\vec w)$ be two positively oriented polytopes, with a combinatorial equivalence  $\Phi: \La(P)\xrightarrow{\cong} \La(Q)$.
Suppose that 
tight coherent subdivisions 
$\sF_{(P, \vec v)}$ and $\sF_{(Q, \vec w)}$ are combinatorially equivalent under 
$\Phi\times \Phi$. 
\begin{enumerate}
\item 
There exists a unique continuous map
\[\tr=\tr_P^Q : P\to Q\ ,\]
which extends the restriction $\Ve(P)\to \Ve(Q)$ of $\Phi$ to the set of vertices  
and which commutes with the respective diagonal maps. 
\item The map $\tr$ is an isomorphism in the category $\PolySub$, whose correspondence of faces agrees with $\Phi$.
\end{enumerate}
\end{proposition}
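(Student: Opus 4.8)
The plan is to build $\tr$ cell-by-cell, using the diagonal map to pin down the values on each face. The key observation is the one recorded in \cref{cor:DIMENSION}: for a point $z$ in the interior $\mathring P$, at least one of the two components of $\triangle_{(P,\vec v)}(z)$ is a single vertex, namely $\tp P$ or $\bm P$. So if we want $\tr$ to commute with the diagonals, the values $\tr(z)$ for $z\in\mathring P$ are heavily constrained by the already-chosen values of $\tr$ on the boundary together with the requirement $\tr\bigl(\tp P\bigr)=\tp Q$ and $\tr\bigl(\bm P\bigr)=\bm Q$. More precisely, I would first handle the generic points: for $z\in\mathring P$ with $\triangle_{(P,\vec v)}(z)=(x,y)$ where $x\in\partial P$ (the case $y\in\partial P$ being symmetric), we are forced to set $\tr(z)$ to be the unique point in $\mathring Q$ whose image under $\triangle_{(Q,\vec w)}$ is $\bigl(\tr(x),\,\tp Q\bigr)$. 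Here one uses that $\triangle_{(P,\vec v)}$ is a section of $\beta$ (noted right after \cref{def:Diag}) to see that prescribing the pair of endpoints of the fiber $\bm(P\cap\rho_z P),\tp(P\cap\rho_z P)$ together with the midpoint $z=\beta$ of that pair determines $z$; combinatorial equivalence of $\sF_{(P,\vec v)}$ with $\sF_{(Q,\vec w)}$ under $\Phi\times\Phi$ is exactly what guarantees the target pair $\bigl(\tr(x),\tp Q\bigr)$ actually occurs as an endpoint pair of a fiber of $\triangle_{(Q,\vec w)}$.

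Concretely, I would proceed by induction on dimension. On vertices $\tr=\Phi$ by fiat. Assuming $\tr$ is defined, continuous, and diagonal-compatible on $\sk_{d-1}P$ in a way that restricts correctly to each proper face (using \cref{Lemma:DiagOnFace}, which says the diagonal of $P$ agrees with the diagonal of any face on that face), I extend to each $d$-face $F$. The polytopal subdivision $\beta\bigl(\sF_{(P,\vec v)}\bigr)$ cuts $F$ into pieces, and on each top piece $\pi(F')$ the diagonal $\triangle$ is an affine section of $\beta$; on such a piece the prescription above defines $\tr$ as an affine (hence continuous) map, because the corresponding piece of $\beta\bigl(\sF_{(Q,\vec w)}\bigr)$ is the combinatorially matching one. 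The pieces agree along their common walls since those walls lie in lower skeleta of the subdivision where the values are already determined, so the pieces glue to a continuous $\tr|_F$; and these in turn glue across faces of $P$ by the inductive hypothesis. Uniqueness is immediate from the construction: at every stage the value of $\tr$ was forced.

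For part (2), once $\tr$ is built it is a morphism in $\PolySub$ essentially by construction: it is piecewise-affine with respect to the subdivision $\beta\bigl(\sF_{(P,\vec v)}\bigr)$ and sends each piece homeomorphically onto the matching piece of $\beta\bigl(\sF_{(Q,\vec w)}\bigr)$, so it carries the subdivision of $P$ to the subdivision of $Q$ cell-for-cell; the same recipe run with $(P,\vec v)$ and $(Q,\vec w)$ interchanged, using $\Phi^{-1}$, produces $\tr_Q^P$, and uniqueness forces $\tr_Q^P\circ\tr_P^Q=\id_P$ and $\tr_P^Q\circ\tr_Q^P=\id_Q$. Hence $\tr$ is an isomorphism in $\PolySub$ whose underlying correspondence of faces is $\Phi$.

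The main obstacle I expect is checking well-definedness and continuity at the points where the two components of $\triangle$ \emph{both} lie in the boundary (the overlap of the ``$x\in\partial P$'' and ``$y\in\partial P$'' prescriptions), and more generally verifying that the piecewise formulas agree on the walls of $\beta\bigl(\sF_{(P,\vec v)}\bigr)$ — i.e. that the extension does not depend on which maximal piece one approaches a wall from. This is where the precise compatibility hypothesis ``$\sF_{(P,\vec v)}\cong\sF_{(Q,\vec w)}$ under $\Phi\times\Phi$'' does the real work, and one has to unwind it carefully rather than wave hands; the inductive bookkeeping that the restriction of $\tr$ to each face is again the $\tr$ for that face (so that everything is consistent on $\sk_{d-1}P$) is the delicate part.
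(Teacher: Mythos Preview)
There is a genuine gap rooted in a misreading of \cref{cor:DIMENSION}. That corollary does \emph{not} say that for $z\in\mathring P$ one component of $\triangle(z)$ is a vertex. It says the image of $\triangle$ lies in $\sk_d(P\times P)$, so the two components lie in faces of \emph{total} dimension at most $d$; in particular, if one component lies in $\mathring P$ then the other must be $\bm P$ or $\tp P$. For a generic $z\in\mathring P$ the situation is the opposite of what you describe: \emph{both} components $x,y$ lie in $\partial P$, sitting in proper faces $F,G$ with $\dim F+\dim G=d$. Your prescription ``if $x\in\partial P$ then force $\triangle_Q(\tr(z))=(\tr(x),\tp Q)$'' is therefore wrong; on such a cell the correct constraint is $\triangle_Q(\tr(z))=(\tr(x),\tr(y))$, i.e.\ $\tr(z)=\tfrac12(\tr(x)+\tr(y))$, with $\tr(x),\tr(y)$ known by induction.

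With that correction your scheme determines $\tr$ on every top cell of $\beta(\sF_{(P,\vec v)})$ \emph{except} the two cells $\beta(\{\bm P\}\times P)$ and $\beta(P\times\{\tp P\})$, which together form a full-dimensional region of $P$ (two half-scale copies of $P$ meeting at the midpoint of $[\bm P,\tp P]$). On those cells one component of $\triangle(z)$ lies in $\mathring P$, so your inductive boundary data gives nothing and the construction stalls. The paper closes this gap by \emph{iterating} the diagonal: writing $\triangle^{(n)}$ for the $2^n$-fold diagonal and $\beta^{(n)}$ for the $2^n$-point average, any compatible $\tr$ satisfies $\tr=\beta^{(n)}_Q\circ\tr^{2^n}\circ\triangle^{(n)}_P$, and one shows that for every $z$ off the segment $[\bm P,\tp P]$ there is an $n$ for which all $2^n$ components of $\triangle^{(n)}_P(z)$ lie in $\partial P$. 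The segment itself is handled by a separate dichotomy argument, and the delicate point becomes continuity of $\tr$ \emph{at} that segment, not the boundary-overlap issue you flagged. Finally, your assertion that $\tr$ is affine on each top cell is also unjustified: the formula $\tr(z)=\tfrac12(\tr(x)+\tr(y))$ is affine in $(x,y)$ but composes with $\tr|_{\partial P}$, which by induction is already only piecewise defined via the same recursion and has no reason to be affine on proper faces of dimension $\geq 2$.
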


We call this map $\tr$ the \emph{transition map}. 

\begin{proof}\leavevmode
\begin{enumerate}
\item 
In the core of this proof, we use the simple notation $\triangle_P$ for $\triangle_{(P, \vec v)}$ and 
\[\triangle^{(n)}_P\coloneqq \triangle^{2^{n-1}}_P\circ \triangle^{2^{n-2}}_P\circ\cdots\circ\triangle^{2}_P\circ\triangle_P\ : \  
P\to P^{2^{n}}\ ,\]
for its iterations. 
We also denote the averaging map by
\begin{align*}
\begin{array}{rccc}
\beta^{(n)}_P\  : & P^{2^n}&\to &P\\
&(x_1, \ldots, x_{2^n}) & \mapsto& 
\displaystyle \frac{x_1+\cdots+x_{2^n}}{2^n}\ .
\end{array}
\end{align*}
Notice that $\triangle^{(n)}_P$ is a section of $\beta^{(n)}_P$. 
Any map $\tr$ commuting with the respective diagonal maps satisfies 
\[\tr = \beta^{(n)}_Q\circ\tr^{2^n}\circ\triangle^{(n)}_P\ . \]

Let us prove the statement by induction on the dimension $d$ of the polytopes $P$ and $Q$. It is obvious for $d=0$. For $d=1$, let us suppose that $P=Q=[0,1]$ and that $\Phi(0)=0$, $\Phi(1)=1$, without any loss of generality. 
Since the two polytopal subdivisions correspond bijectively under $\Phi$, the definition of the diagonal maps shows that $\vec v$ and $\vec w$ are oriented in the same direction.  By dichotomy, one can check that the $2^n$-tuple $\triangle^{(n)}_P\left(\frac{k}{2^n}\right)$ is made up of  $2^{n}-k$ zeros and $k$ ones. This induces the formula
\[\tr\left(\frac{k}{2^n}\right) 
= \beta^{(n)}_Q\circ\Phi^{2^n}\circ\triangle^{(n)}_P\left(\frac{k}{2^n}\right) 
= \frac{k}{2^n} \  . \]
By continuity,  the map $\tr$ is the identity of $[0, 1]$. 

Let us now suppose that the statement holds up to dimension $d-1$ and let $P$ and $Q$ be two polytopes of dimension $d$. Since the restriction of the diagonal map of $P$ to a face $F\in \La(P)$ is equal to the diagonal map of the face, i.e.  $\triangle_{(P, \vec v)}(z)=\triangle_{(F, \vec v)}(z)$, for any $z\in F$,  by \cref{Lemma:DiagOnFace}, the 
 induction hypothesis implies that the transition map $\tr$ exists and is uniquely defined on the $(d-1)$-skeleton of $P$. 
To study,  the interior of the top face of $P$, we consider the following filtration 
\[P(n)\coloneqq P\setminus \left(\sum_{k=0}^{2^n-1} \frac{k\bm P+(2^n-1-k)\tp P + P}{2^n} \right)\ ,\]
for $n> 0$. Notice that 
\[\bigcup_{n>0} P(n)=P \setminus [\bm P, \tp P]\ ,\]
where $[\bm P, \tp P]$ stands for the interval defined by the top and the bottom vertices of $P$.
\cref{cor:DIMENSION} shows that the faces appearing in the tight coherent subdivision 
 corresponding to the section 
$\triangle^{(n)}_P$ are of two kinds: $(\bm P, \ldots, \bm P, P, \tp P, \ldots, \tp P)$ or $(F_1, \ldots, F_{2^n})$, with 
$\codim F_i\geq 1$, for all $1\leq i \leq 2^n$. The images of the first ones under $\beta^{(n)}_P$ are equal to the sets excluded from $P$ in the definition of $P(n)$. Otherwise stated,  the image of any $z\in P(n)$ under the iterated diagonal map satisfies $\triangle^{(n)}_P(z)\in \left(\sk_{d-1}P\right)^{2^n}$, and so 
\[\tr(z) = \beta^{(n)}_Q\circ\big(\tr|_{\partial P}\big)^{2^n}\circ\triangle^{(n)}_P(z)\ . \]
The image of the transition map $\tr$ on the main axis $[\bm P, \tp P]$ is given by the same dichotomy argument as in the case $d=1$. In the end, this proves the uniqueness of the transition map. 

To show the existence of such a suitable transition map, we define it by the above-mentioned formulas. It  remains to prove the continuity at points $x\in [\bm P, \tp P]$ of the main axis. Let $\varepsilon>0$ and let us find 
 $\delta>0$ such that $\abs{x-y} <\delta$ implies $\abs{\tr (x)-\tr (y)}<\varepsilon$. 
We consider $n\in \ZP$ satisfying $\diam Q< 2^n\varepsilon$\ .
There are two cases to consider. 
\begin{enumerate}
\item When $x$ cannot be written as $(k\bm P+ (2^n-k)\tp P)/2^n$, with  $0\leq k\leq 2^n$, 
it is of the form $(k\bm P+ (2^n-1-k)\tp P+ x')/2^n$ for some $x'\in \mathring P$. 
In this case, we can take a small enough $\delta>0$ such that $\abs{x-y}<\delta$ 
implies that $y$ is of the form $(k\bm P+ (2^n-1-k)\tp P+ y')/2^n$, for some $y'\in \mathring P$.
This implies  $\triangle_P^{(n)}(x) = (\bm P, \ldots, \bm P, x', \tp P, \ldots, \tp P)$ and 
$\triangle_P^{(n)}(y) = (\bm P, \ldots, \bm P, y', \tp P, \ldots, \tp P)$, with the same number of $\bm P$ and $\tp P$, and so 
\[\abs{\tr(x)-\tr(y)} = \frac{\abs{\tr(x')-\tr(y')}}{2^n}\leq \frac{\diam Q}{2^n}< \varepsilon\ .\]

\item Otherwise, we can write $x$ as $(k\bm P+ (2^n-k)\tp P)/2^n$, with $0\leq k\leq 2^n$. 
We further divide into the following two cases and take the least $\delta$.
\begin{enumerate}
\item When $y\not\in P(n)$, we can take $\delta>0$ such that if $\abs{x -y}<\delta$, 
then $y$ is contained in $(k\bm P+ (2^n-1-k)\tp P+ P)/2^n$ or $((k-1)\bm P+ (2^n-k)\tp P+ P)/2^n$. 
This implies $\abs{\tr(x)-\tr(y)}<\varepsilon$ as above.

\item When $y\in P(n)$, observe that $\beta_Q^{(n)}\circ\left(\tr|_{\partial P}\right)^{2^n}\circ\triangle_P^{(n)}$ actually defines 
a continuous restriction of $\tr$ to the closed set
\[P\setminus \left(\sum_{k=0}^{2^n-1} \frac{k\bm P+(2^n-1-k)\tp P + \mathring P}{2^n} \right) \]
which contains both $x$ and $y$. Therefore we can choose $\delta$ which satisfies the condition.
\end{enumerate}
\end{enumerate}
\color{black}
\item This is straightforward from the above description. The inverse morphism in $\PolySub$ is the transition map 
$\tr_Q^P$. 
\end{enumerate}
\end{proof}

\begin{corollary}
Any two normally equivalent polytopes positively oriented by the same orientation vector $\vec v$ 
are isomorphic in the category $\PolySub$, with an isomorphism commuting with the diagonal maps. 
\end{corollary}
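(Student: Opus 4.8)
The plan is to deduce this corollary directly from \cref{prop:Transition} together with \cref{Prop:NormalFan}, since all the real work has already been done. Suppose $(P,\vec v)$ and $(Q,\vec v)$ are normally equivalent polytopes positively oriented by the same orientation vector. First I would invoke \cref{Prop:NormalFan}: because $\No_P=\No_Q$ and both are well-oriented by the same $\vec v$, there is a canonical combinatorial equivalence $\Phi:\La(P)\xrightarrow{\cong}\La(Q)$ under which the tight coherent subdivisions $\sF_{(P,\vec v)}$ and $\sF_{(Q,\vec v)}$ correspond, i.e.\ they are combinatorially equivalent via $\Phi\times\Phi$. This is precisely the hypothesis needed to apply \cref{prop:Transition}.

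Next I would apply \cref{prop:Transition} to this data. Part~(1) yields a unique continuous map $\tr=\tr_P^Q:P\to Q$ extending the vertex bijection induced by $\Phi$ and commuting with the diagonals $\triangle_{(P,\vec v)}$ and $\triangle_{(Q,\vec v)}$. Part~(2) says $\tr$ is an isomorphism in $\PolySub$ whose induced correspondence of faces is $\Phi$, with inverse $\tr_Q^P$. Since $\tr$ commutes with the diagonal maps by construction, this is exactly the asserted isomorphism in $\PolySub$ commuting with the diagonals, which completes the proof.

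There is essentially no obstacle here: the corollary is a packaging of two previously established results, and the only thing to check is that the hypothesis of \cref{prop:Transition} — combinatorial equivalence of the tight coherent subdivisions under $\Phi\times\Phi$ — is supplied by \cref{Prop:NormalFan}, which it is. If one wanted to be slightly more careful, the one point worth a sentence is that \cref{Prop:NormalFan} requires both polytopes to be \emph{well-oriented} by $\vec v$; here that is part of the hypothesis (``positively oriented by the same orientation vector''), and in any case a normal equivalence carries edges to edges, so the non-perpendicularity condition and the induced poset on vertices transport from $P$ to $Q$. Thus the proof is a two-line citation, and I expect the author's proof to read simply ``This is a direct consequence of \cref{Prop:NormalFan} and \cref{prop:Transition}.''
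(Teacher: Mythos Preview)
Your proposal is correct and matches the paper's proof exactly: the authors write simply ``This is a direct corollary of \cref{Prop:NormalFan} and \cref{prop:Transition}.'' Your prediction of their proof is spot on, and your unpacking of how the two results fit together is accurate.
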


\begin{proof}
This is a direct corollary of \cref{Prop:NormalFan} and \cref{prop:Transition}. 
\end{proof}

This produces a stronger comparison between the diagonal maps of two Loday realizations associated to different weights than \cref{Cor:SameDiag}: the transition map $\tr=\tr_\omega^\theta : \K_\omega \to \K_\theta$ 
preserves homeomorphically the faces of the same type  and it commutes with the respective diagonals. Up to isomorphisms in the category $\PolySub$, the diagonal maps 
do not  depend on the orientation vector (with decreasing coordinates) nor on the weight.

\subsection{Operad structure}
We use the above results to endow the collection $\{\K_n\}_{n\geq 1}$ of Loday realizations (with standard weights) with an operad structure as follows. 

\begin{definition}[Operad structure]
For any $n,m\geq 1$ and any $1\leq i \leq m$, we define the \emph{partial composition map}  by 
\[
\vcenter{\hbox{
\begin{tikzcd}[column sep=1cm]
\circ_i\ : \ \K_m\times \K_n
\arrow[r,  "\tr\times \id"]
& \K_{(1,\ldots,n,\ldots,1)}\times \K_n 
 \arrow[r,hookrightarrow, "\Theta"]
&
\K_{n+m-1}\ ,
\end{tikzcd}
}}\]
where the last inclusion is given by the block permutation of the coordinates introduced in  the proof of \cref{prop:PropertiesKLoday}. 
\end{definition}

\begin{theorem}\label{thm:MainOperad}\leavevmode
\begin{enumerate}
\item The collection $\{K_n\}_{n\geq 1}$ together with the partial composition maps $\circ_i$ form a non-symmetric operad in the category $\PolySub$. 

\item The maps $\{\triangle_n : K_n \to K_n\times K_n\}_{n\geq 1}$ form a morphism of non-symmetric operads in the category $\PolySub$.
\end{enumerate}
\end{theorem}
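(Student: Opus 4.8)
The plan is to deduce Part~(1) from the general machinery already in place and then derive Part~(2) as a formal consequence of how the composition maps were defined.

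For Part~(1), the main tasks are to show that each partial composition $\circ_i$ is a well-defined morphism in $\PolySub$, that the induced map on face lattices is the grafting operation $\circ_i$ on planar trees, and that the associativity/unitality axioms of a non-symmetric operad hold. The first point is essentially immediate: $\tr$ is an isomorphism in $\PolySub$ by \cref{prop:Transition}, $\id$ is one trivially, the product of morphisms in $\PolySub$ is a morphism (the category is monoidal by the Lemma in the excerpt), and $\Theta$ is the block permutation of coordinates, which by Point~(5) of \cref{prop:PropertiesKLoday} identifies $\K_{\overline\omega}\times\K_{\widetilde\omega}$ with a facet of $\K_{n+m-1}$ -- in particular it is an isomorphism onto a subcomplex, hence a morphism in $\PolySub$. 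For the combinatorial statement, I would trace through the effect on faces: $\tr$ preserves face types by \cref{prop:Transition}(2), and $\Theta$ sends the product of faces of $\K_{(1,\ldots,n,\ldots,1)}$ and $\K_n$ to the face of $\K_{n+m-1}$ labelled by grafting the corresponding planar trees at the appropriate leaf, exactly as in the proof of Point~(4)--(5) of \cref{prop:PropertiesKLoday}. Since grafting of planar trees is associative and unital (with the one-leaf tree $|$ as unit, corresponding to $\K_1$, a point), and since a morphism in $\PolySub$ is determined by a cell-respecting homeomorphism, the operad axioms for $\{\K_n\}$ reduce to the corresponding identities among the maps $\tr$ and $\Theta$ on each cell. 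The associativity of $\Theta$-insertions is the usual fact that nested block permutations compose correctly; the only subtlety is that one must commute a $\tr$ past a $\Theta$, i.e.\ check that transporting through a weight change and then inserting as a facet agrees with inserting first and then transporting. This is where I expect the real work to be: one needs that the transition maps are natural with respect to the facet inclusions $\Theta$, which should follow from the uniqueness clause of \cref{prop:Transition}(1) -- both composites are continuous maps extending the same vertex bijection and commuting with the diagonals, hence equal. I would phrase the associativity verification as: both $(\,\cdot\circ_i\cdot\,)\circ_j\cdot$ and the reassociated composite are morphisms in $\PolySub$ inducing the same operation on planar trees and commuting with the diagonals, so by the uniqueness in \cref{prop:Transition} they coincide.

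For Part~(2), the key input is \cref{prop:IndepVect}, \cref{Lemma:DiagOnFace}, and the fact that $\tr$ commutes with the diagonals by construction (\cref{prop:Transition}(1)). A morphism of non-symmetric operads in a symmetric monoidal category means that for all $n,m,i$ the square relating $\triangle_{n+m-1}\circ(\,\circ_i\,)$ and $(\triangle_m\times\triangle_n)$ followed by the operad structure of $\{\K_k\times\K_k\}$ commutes. Unwinding the definition of $\circ_i$, this amounts to: (a) $\tr$ commutes with $\triangle$, which is exactly \cref{prop:Transition}(1); and (b) the facet inclusion $\Theta$ intertwines $\triangle_{\overline\omega}\times\triangle_{\widetilde\omega}$ (suitably shuffled) with the restriction of $\triangle_{n+m-1}$ to that facet. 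Point~(b) is precisely \cref{Lemma:DiagOnFace} together with the observation that the diagonal of a product polytope, computed with an orientation vector with decreasing coordinates, is the product of the diagonals after the block shuffle -- which follows since the intersection $(\K_{\overline\omega}\times\K_{\widetilde\omega})\cap\rho_z(\K_{\overline\omega}\times\K_{\widetilde\omega})$ splits as a product of the corresponding intersections, and top/bottom vertices of a product are products of top/bottom vertices. I would state this splitting as a small standalone observation before invoking it.

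The main obstacle, as indicated above, is the bookkeeping for operad associativity: one must be careful that the transition maps $\tr:\K_m\to\K_{(1,\ldots,n,\ldots,1)}$ interact correctly under iterated insertions, since the intermediate weights one obtains depend on the order of composition. The cleanest way around this is not to compute anything explicitly but to package everything through the uniqueness statement of \cref{prop:Transition}(1): any two morphisms in $\PolySub$ between the relevant polytopes that restrict to the same bijection on vertices and commute with the diagonal maps must agree. Thus both sides of each operadic identity, being such morphisms and inducing the same operation on planar trees (hence the same vertex bijection), are forced to coincide. This reduces the entire theorem to the three facts already established -- the monoidal structure on $\PolySub$, the properties of $\Theta$ from \cref{prop:PropertiesKLoday}, and the existence/uniqueness of transition maps commuting with diagonals from \cref{prop:Transition} -- plus the elementary product formula for the diagonal of a product of positively oriented polytopes.
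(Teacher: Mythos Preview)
Your proposal is correct and follows essentially the same route as the paper: both arguments reduce the operad axioms to the uniqueness clause of \cref{prop:Transition}, by observing that the two composites in each associativity square are cellular homeomorphisms onto the same face of $\K_{l+m+n-2}$, induce the same bijection on vertices (via associativity of tree grafting), and commute with the diagonal maps. Your write-up is in fact somewhat more explicit than the paper's---you spell out why $\Theta$ intertwines the diagonals (via \cref{Lemma:DiagOnFace} plus the product splitting of $\bm$ and $\tp$), whereas the paper simply asserts that ``$\tr$ and $\Theta$ commute with the diagonal maps'' and defers the justification for $\Theta$ to the later proof of the magical formula.
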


\begin{proof}
By \cref{prop:Transition} and \cref{prop:DiaginPoly}, the various maps are morphisms in the category $\PolySub$. 

\begin{enumerate}
\item We need to prove the sequential and the parallel composition axioms of a non-symmetric operad, see \cite[Section~5.3.4]{LodayVallette12}.
The sequential composition axiom amounts to the commutativity of the following diagram 
\[
\vcenter{\hbox{
\begin{tikzcd}[column sep=1.2cm, row sep=0.9cm]
\K_l\times \K_m\times \K_n  
\arrow[r,  "\id\times \circ_j"] 
\arrow[d,  "\circ_i \times \id"]
& \K_l\times \K_{m+n-1} \arrow[d,  "\circ_i"] \\
\K_{l+m-1} \times \K_n   \arrow[r,  "\circ_{i+j-1}"]
& \K_{l+m+n-2}\ .
\end{tikzcd}
}}\]
Let us denote by $F$ the face of $\K_{l+m+n-2}$ labelled by the composite tree 
$c_l \circ_i (c_m \circ_j c_n)=(c_l \circ_i c_m) \circ_{i+j-1} c_n$. The two maps of this diagram have the same image equal to $F$. They both induce two cellular homeomorphisms $\K_l\times \K_m\times \K_n  \to F$ which meet the requirements of \cref{prop:Transition} by  \cref{prop:PropertiesKLoday} and by the fact that the transition map $\tr$ and the isomorphism $\Theta$ commute with the diagonal maps. So they are equal by Point~(1) of \cref{prop:Transition}. 
The parallel composition axiom is proved in the same way. 

\item This statement means that the partial composition maps commute with the diagonal maps, which is the case since the maps $\tr$ and $\Theta$ do. 
\end{enumerate}
\end{proof}

Under the cellular chain functor, we recover the classical differential graded non-symmetric operad 
$\mathcal{A}_\infty$ encoding homotopy associative algebras \cite{Stasheff63}, see also \cite[Chapter~9]{LodayVallette12}. 
To understand the induced diagonal on the differential graded level, we need the following magical formula describing its cellular structure. 

\section{The magical formula}\label{sec:MagicalFormula}

\begin{theorem}[Magical formula]\label{thm:MagicFormula}
For any Loday realization of the associahedra, the approximation of the diagonal satisfies 
\[{\im \triangle_n 
=\bigcup_{\tp F\leq \bm G\atop \dim F+\dim G=n-2} F\times G}\ .\] 
\begin{figure}[h]
\[
\begin{tikzpicture}[scale=0.8]
\draw[thick] (-5,0)--(0,4)--(5,0)--(2.5,-4)--(-2.5,-4)--cycle;
\draw[fill, opacity=0.12] (-5,0)--(0,4)--(5,0)--(2.5,-4)--(-2.5,-4)--cycle;

\draw (-2.5,2)--(0,0)--(-1.5,-2)--(-3.75,-2);
\draw (2.5,2)--(0,0)--(1.5,-2)--(3.75,-2);
\draw (-1.5,-2)--(0,-4)--(1.5,-2);

\draw (-2.7,-0.25) node {$\TreeLL\ \times\TreeC$};
\draw (2.7,-0.25) node {$\TreeC\ \times\TreeRR$};
\draw (0,-2.2) node {$\TreeAL\ \times\TreeRA$};
\draw (0,2) node {$\TreeLA\ \times\TreeAR$};
\draw (-2.25,-3) node {$\TreeAL\ \times\TreeCA$};
\draw (2.15,-3) node {$\TreeCA\ \times\TreeRA$};

\draw (-5,0) node[left] {$\TreeLL$};
\draw (5,0) node[right] {$\TreeRR$};
\draw (0,4) node[above] {$\TreeCC$};
\draw (-2.5,-4) node[below left] {$\TreeLR$};
\draw (2.5,-4) node[below right] {$\TreeRL$};
\draw (-4,-2) node[left] {$\TreeAL$};
\draw (4,-2) node[right] {$\TreeRA$};
\draw (-2.5,2) node[above left] {$\TreeLA$};
\draw (2.5,2) node[above right] {$\TreeAR$};
\draw (0,-4) node[below] {$\TreeCA$};
\end{tikzpicture}
\]
\caption{The polytopal subdivision $\beta\left(\sF_4\right)$ of $\K_4$.}
\end{figure}
\end{theorem}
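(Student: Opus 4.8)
The plan is to identify explicitly which pairs of faces $(F,G)$ of $\K_n$ appear in the tight coherent subdivision $\sF_n$, using the criterion for membership in $\im\triangle_n$ developed in the proof of \cref{Prop:NormalFan}. That criterion says $(x,y)\in\im\triangle_n$, with $x\in\mathring F$ and $y\in\mathring G$, if and only if there is no vector $\vec w$ with $\ang{\vec v,\vec w}>0$ lying in $-\No_{\K_n}(F)^\star\cap\No_{\K_n}(G)^\star$; equivalently, by the polar cone theorem, $F\times G$ contributes to the subdivision precisely when one cannot simultaneously move the $F$-coordinate ``downward'' and the $G$-coordinate ``upward'' along the orientation $\vec v$ while staying in $\K_n\times\K_n$. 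The first step is to translate this geometric condition into the combinatorics of planar trees, using the description of faces of $\K_\omega$ from \cref{prop:PropertiesKLoday}: a face labelled by a planar tree $t$ is $\conv\{M(s,\omega)\mid s\subset t\}$, it is itself (a permuted product of) Loday realizations, and its top and bottom vertices with respect to $\vec v$ are $M(\tp F,\omega)$ and $M(\bm F,\omega)$ where $\tp F$ and $\bm F$ are the maximum and minimum in the Tamari order of the binary refinements of $t$.

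Next I would analyze the dimension constraint. Since the subdivision is \emph{tight}, every cell $F\times G$ of $\sF_n$ projects isomorphically under $\beta$ onto a full-dimensional cell of $\beta(\sF_n)$, so $\dim F+\dim G=\dim\K_n=n-2$; this gives the equality $\dim F+\dim G=n-2$ for free and reduces the problem to determining the order-theoretic relation between $F$ and $G$. The heart of the argument is then: show that the non-existence of the separating vector $\vec w$ above is equivalent to $\tp F\le\bm G$ in the Tamari order. For the direction ``$\tp F\le\bm G\Rightarrow F\times G\subset\im\triangle$'', I would take a point $z$ on the segment between a point of $\mathring F$ and a point of $\mathring G$ and verify directly, using the explicit inequalities of Point~(2) of \cref{prop:PropertiesKLoday} and the decreasing orientation vector, that $(\,\bm(\K_n\cap\rho_z\K_n),\,\tp(\K_n\cap\rho_z\K_n)\,)$ lands in $\mathring F\times\mathring G$; the comparability $\tp F\le\bm G$ is exactly what makes the relevant face of $\K_n\cap\rho_z\K_n$ collapse to the single pair. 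For the converse, I would show that if $F$ and $G$ are not Tamari-comparable in the required way, then there is an edge-direction $\vec e_j$ (as in the proof of \cref{prop:OrientationVector}) that one can use to build the forbidden vector $\vec w$, contradicting $(x,y)\in\im\triangle$. A useful reduction throughout is \cref{Lemma:DiagOnFace} together with Point~(5) of \cref{prop:PropertiesKLoday}, which lets one pass to sub-associahedra and argue by induction on $n$, the base cases $n=1,2$ being trivial and $n=3$ (the interval) and $n=4$ (the pentagon, depicted in the figure) serving as sanity checks.

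The main obstacle I expect is the converse direction, i.e.\ proving that \emph{every} cell of $\sF_n$ is of the stated form — a priori the fiber-polytope subdivision could contain cells indexed by pairs of trees whose comparability is subtler than the naive Tamari relation. Handling this cleanly will require a careful bookkeeping of which collections of facet-inequalities from Point~(2) of \cref{prop:PropertiesKLoday} are simultaneously tight at $M(\bm(\K_n\cap\rho_z\K_n),\omega)$ and at $M(\tp(\K_n\cap\rho_z\K_n),\omega)$, and translating ``the intervals indexing these inequalities are nested or disjoint'' (the condition appearing in the proof of Point~(3)) into the statement that the two trees are Tamari-comparable. I would organize this by first treating the case where $F$ or $G$ is a vertex (so one component of $\triangle_n(z)$ is $\bm\K_n=\TreeLL$ or $\tp\K_n=\TreeRR$, matching \cref{cor:DIMENSION}), then the case where $F$ or $G$ is a facet, and finally assembling the general case from the product decomposition of faces. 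As a final step I would remark that applying the cellular chain functor to this description recovers the Saneblidze--Umble/Markl--Shnider diagonal formula, as claimed in the introduction.
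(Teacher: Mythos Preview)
Your overall shape is right---the dimension constraint comes for free from tightness, and the normal-cone criterion from \cref{Prop:NormalFan} is indeed the engine---but there is a real gap in your plan for the exclusion direction, and your inclusion argument is both vaguer and harder than what the paper actually does.

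\textbf{Exclusion (your ``converse'').} You assert that whenever $\tp F\not\leq\bm G$ one can find a single edge-direction $\vec e_j$ serving as the forbidden vector $\vec w$. This is not true in general. The paper introduces an auxiliary Boolean invariant $L:\Tam{n}\to\{0,1\}^{n-2}$ recording the orientation (left/right-leaning) of each interior leaf; one can produce such an $\vec e_i$ precisely when $L(\tp F)\not\leq L(\bm G)$, because then there is an index $i$ with $L_i(\tp F)=1$ and $L_i(\bm G)=0$, and \cref{lem:FACE} shows one can push $x$ by $-\varepsilon\vec e_i$ and $y$ by $+\varepsilon\vec e_i$. But there remain pairs with $\tp F\not\leq\bm G$ yet $L(\tp F)\leq L(\bm G)$; for these no single $\vec e_j$ works, and the paper handles them by first showing (\cref{lemma:Matching}) that such a matching-dimension pair forces $L(\tp F)=L(\bm G)$, which by \cref{lemma:propertiesofL}(4) places both vertices in a common facet, and then inducting on dimension via \cref{Lemma:DiagOnFace}. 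Your proposal mentions induction only as a ``useful reduction'' rather than as structurally necessary here; without the $L$-map and the facet-restriction step, the argument does not close.

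\textbf{Inclusion.} Your plan---pick $z$ on a segment and compute $\bm$ and $\tp$ of $\K_n\cap\rho_z\K_n$ directly from the facet inequalities---is in principle possible but would be painful, and the paper avoids it entirely. Instead it observes that for any vertex $t$ the point $(t,t)=\triangle_n(t)$ lies in $\im\triangle_n$, so by Step~1 the unique matching cell containing it, namely $F_t\times G_t$, is in the image (\cref{prop:FtGt}). Then, fixing $s$, it walks $t$ upward along Tamari covering edges $t\prec u$ with $L(t)=L(u)$: \cref{lem:FacetGuGt} shows $F_s\times(G_t\cap G_u)$ is a common facet of $F_s\times G_t$ and $F_s\times G_u$ and of no other matching cell, so the polytopal subdivision forces $F_s\times G_u\subset\im\triangle_n$ once $F_s\times G_t$ is. This edge-walk is both shorter and cleaner than a direct verification, and it crucially reuses the classification from Step~1 (that only matching pairs can occur) to pin down the neighbouring cell.
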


The pairs of faces appearing on the right-hand side of the magical formula are called \emph{matching pairs}. 
In other words, the tight coherent subdivision 
$\sF_n=\left\{ (F,G) \mid \tp F\leq \bm G,\, \dim F+\dim G=n-2\right\}$, made up of matching pairs, gives a polytopal subdivision of the associahedra under $\beta$.
Applying the cellular chain functor, we recover the differential graded diagonal given in \cite{SaneblidzeUmble04, MarklShnider06}. Notice that neither the pointwise version nor the cellular version of the diagonal map $\triangle_n$ can be coassociative by \cite[Theorem~13]{MarklShnider06}. 

\subsection{First step: $\im \triangle_n \subset \bigcup F\times G$}\label{Subsec:Subset}
We prove this property more generally for any \emph{product} $P \coloneqq K_{\omega_1}\times \cdots \times K_{\omega_k}$ of Loday realizations of the associahedra. Recall that 
$P\subset \RR^{N}$, where $N\coloneq n_1+\cdots+n_k-k$, and $d\coloneqq \dim P = n_1+\cdots+n_k-2k$. 
The set $\Ve(P)$ of vertices of $P$ coincides with 
$\Tam{n_1}\times \cdots \times \Tam{n_k}$. 
By \cref{prop:OrientationVector}, any vector 
 $\vec v= (v_1, v_2, \ldots, v_N)$ satisfying 
\[v_1>\cdots>v_{n_1-1}\ ,\ \  v_{n_1}>\cdots>v_{n_1+n_2-2}\ ,\quad  \ldots\ , \quad   v_{n_1+\cdots+n_{k-1}-k+2}>\cdots>v_{N}\  \]
makes $(P, \vec v)$ into a positively oriented polytope with $1$-skeleton isomorphic to the product of Tamari lattices. 

\medskip

We consider the  map $L=(L_i)_{1\leq i\leq m-2} : \Tam{m}\to \Bool{m-2}$ 
to the Boolean lattice
defined by 
\begin{align*}
L_i(t)\coloneqq\begin{cases}
0& \text{if the $(i+1)$-th leaf is left-leaning $\swarrow$\ ,}\\
1& \text{if the $(i+1)$-th leaf is right-leaning $\searrow$}\ .
\end{cases}
\end{align*}
We extend it to a map $L=(L_i)_{1\leq i \leq d}: \Ve(P)\to \Bool{d}$. We consider the collection of vectors 
$\vec e_1, \ldots, \vec e_d$ in $\RR^n$ defined by 
$\vec e_i\coloneqq (0, \ldots, 1, -1, \ldots, 0)$, where $1$ is in the $k$-th place for $k\neq n_1+\cdots+n_j-j$.

\begin{lemma}\label{lemma:propertiesofL}
The following properties are satisfied:
\begin{enumerate}
\item $s\leq t$  $\Rightarrow L(s)\leq L(t)$ ,

\item $\ang{\vec e_i, \vec v} >0$, for $1\leq i \leq d$ , 

\item each edge  connecting $s\in L^{-1}_i (0)$ and $t\in L^{-1}_i(1)$ is parallel to $\vec e_i$ 

\item any fiber $L^{-1}(b)$, for $b\in  \Bool{d}$,  is contained in a facet of $P$ . 
\end{enumerate}
\end{lemma}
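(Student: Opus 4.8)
The plan is to verify the four statements essentially combinatorially, reducing each to the single-factor case $P = \K_\omega$ and then extending to the product by working coordinate-block by coordinate-block. First I would recall the covering relation defining the Tamari order: $s \prec t$ replaces a locally left-then-right configuration of edges by a right-then-left one. In terms of the leaf-leaning word $L(t)$, this covering move takes two adjacent leaves that look like $(\text{left},\text{right})$ somewhere (after suitable bracketing) and changes the corresponding coordinates of $L$; one checks on the generating relation that it can only turn a $0$ into a $1$, never the reverse. Since $L$ is built factorwise from the single-factor map $\Tam{m}\to\{0,1\}^{m-2}$, and the product Tamari order on $\Ve(P)$ is the product order, statement (1), $s\le t \Rightarrow L(s)\le L(t)$, follows by taking the transitive closure of the covering relations in each factor.

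For (2), the vectors $\vec e_i$ are of the form $(0,\dots,0,1,-1,0,\dots,0)$ with the $1$ in position $k$ and $-1$ in position $k+1$, where $k$ and $k+1$ lie in the same coordinate block (the restriction $k \neq n_1 + \cdots + n_j - j$ is precisely what keeps the $\pm 1$ inside one block and not straddling two factors). Then $\ang{\vec e_i,\vec v} = v_k - v_{k+1} > 0$ by the hypothesis that $\vec v$ has strictly decreasing coordinates within each block. Statement (3) is the key geometric input and the one I expect to be the main obstacle: I would use Point~(2) of \cref{prop:PropertiesKLoday} together with the explicit edge description already used in the proofs of \cref{prop:OrientationVector} and \cref{prop:IndepVect}, namely that an edge of $\K_\omega$ corresponding to a Tamari covering $s\prec t$ has direction vector $\overrightarrow{M(s,\omega)M(t,\omega)} = (0,\dots,0,x,0,\dots,0,-x,0,\dots,0)$. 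The subtle point is to check that when the $i$-th leaf-leaning coordinate flips (i.e. $s \in L_i^{-1}(0)$, $t \in L_i^{-1}(1)$) the nonzero entries of this difference vector land in positions $k, k+1$ matching $\vec e_i$; this requires tracking how the Tamari covering move at a given internal vertex affects the products $\alpha_j\beta_j$ and identifying which single coordinate of $M$ increases and which decreases. For the product $P$, one restricts to the factor in which the edge lives and transports this computation through the coordinate embedding, so the direction stays within one block.

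Finally, statement (4): given $b \in \{0,1\}^d$, the fiber $L^{-1}(b)$ consists of all vertices whose leaf-leaning word is $b$; I claim this set lies in a single facet of $P$. It suffices (by the product structure) to treat one factor, so fix $j$ and look at $L^{-1}$ of a fixed word $c\in\{0,1\}^{m-2}$ inside $\Tam{m}$. If $c$ has at least one $0$ followed later by a $1$ — actually the right combinatorial statement is: locate in $c$ a position forcing a common internal edge. Concretely, since $c$ is not the all-ones word (that is the unique vertex $\TreeRR$ only if $c$ is maximal) pick the relevant pattern and observe that every planar binary tree with that leaf-leaning word shares a fixed internal edge, hence all the corresponding points $M(t,\omega)$ satisfy with equality one of the inequalities of Point~(2) of \cref{prop:PropertiesKLoday}; that equality cuts out a facet containing the whole fiber. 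The only genuine case analysis is confirming that a prescribed leaf-leaning word indeed forces a common internal edge — this is a short induction on $m$ using that the leanings of the two leaves adjacent to the root determine whether the root's left or right subtree is a single leaf, peeling off one leaf at a time. I expect statement (3) to absorb most of the work; (1), (2), (4) are bookkeeping once the single-factor picture and the explicit edge vectors are in hand.
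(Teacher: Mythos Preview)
Your approach to (1), (2), and (3) is essentially the paper's. For (3) you anticipate it to be the main obstacle, but it is actually short once you use (1): an edge $s\prec t$ with $L_i(s)=0$ and $L_i(t)=1$ is a Tamari covering in which exactly the $(i{+}1)$-th leaf flips its leaning; this forces the middle subtree $t_2$ in the rotation $((t_1,t_2),t_3)\to(t_1,(t_2,t_3))$ to be that single leaf. Then only the two coordinates of $M$ at the vertices flanking leaf $i{+}1$ (positions $i$ and $i{+}1$ within the relevant factor) change, and the difference is a positive multiple of $\vec e_i$.

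Where your plan is weakest is (4). The proposed induction ``peeling off one leaf at a time'' using ``the leanings of the two leaves adjacent to the root'' is vague and, as stated, does not obviously work: the leanings of the extremal leaves do not directly determine whether a subtree of the root is a single leaf. The paper's argument is direct and avoids any induction. Augment the word $b$ to $(1,b_1,\ldots,b_{n_1-2},0)$, recording that leaf $1$ is always right-leaning and leaf $n_1$ always left-leaning. Any $\{0,1\}$-word starting with $1$ and ending with $0$ has a consecutive pattern $(1,0)$ somewhere, say at positions $i,i{+}1$. This says leaf $i$ is a left child and leaf $i{+}1$ a right child; being adjacent, they share a parent and form a cherry, so every tree in the fiber factors as $u\circ_i c_2$ and lies in the facet labelled $(c_{n_1-1}\circ_i c_2,c_{n_2},\ldots,c_{n_k})$. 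This is presumably the ``relevant pattern'' you were reaching for, and once you see it the inductive framing becomes unnecessary.
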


\begin{proof}\leavevmode
\begin{enumerate}
\item When we switch a pair of successive left and right-leaning edges to a pair of successive right and left-leaning edges, either it does not change the orientation of the leaves or it just changes one left-leaning leaf into a right-leaning leaf. 

\item  This is  straightforward from the definition of $\vec v$. 

\item It is enough to prove it on one polytope $K_\omega$. 
In this case, $t$ is a planar binary tree  obtained from a planar binary tree $s$ by switching the $(i+1)^{\rm th}$ leaf from left-leaning to right-leaning, which implies 
\[
\overrightarrow{M(s, \omega)M(t, \omega)}=\omega_i\omega_{i+2}\vec e_i\ .
\]
\item Reading the sequence $(1, b_1, \ldots, b_{n_1-2}, 0)$ from left to right, there is at least one occurence of $(1,0)$, say at place $i$ and $i+1$. Every face labeled by a forest of trees $t$ satisfying $L(t)=b$ lies in the facet labeled by $\big(c_{n_1-1}\circ_i c_2, c_{n_2}, \ldots, c_{n_k}\big)$.
\end{enumerate}
\end{proof}

\begin{lemma}\label{lem:FACE}
Let $F$ be a face of $P$ which contains a vertex $s$ such that $L_i(s) = 1$. For any $x\in \mathring F$, there exists $\varepsilon>0$ such that $x- \varepsilon \vec e_i\in P$.
\end{lemma}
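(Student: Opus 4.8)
\textbf{Proof plan for \cref{lem:FACE}.}
The goal is to show that if a face $F$ of $P=\K_{\omega_1}\times\cdots\times\K_{\omega_k}$ contains some vertex $s$ with $L_i(s)=1$, then from any interior point $x$ of $F$ one can move a small amount in the direction $-\vec e_i$ and stay inside $P$. The plan is to reduce this to the single-polytope case and then read off the statement from the facet description in Point~(2) of \cref{prop:PropertiesKLoday}. First I would note that $-\vec e_i$ only affects two consecutive coordinates lying within a single block, say the block corresponding to $\K_{\omega_j}$, so by projecting to that factor it suffices to treat $P=\K_\omega$ itself; the other factors are untouched. Thus we may assume $P=\K_\omega$, $x\in\mathring F$, and $F$ has a vertex $t$ (a planar binary tree) whose $(i+1)$-th leaf is right-leaning.

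The key point is to characterize, via \cref{prop:PropertiesKLoday}(2), which facet inequalities are tight at $x$. A facet of $\K_\omega$ is given by an inequality $x_{p+1}+\cdots+x_{p+q-1}\ge \sum_{p+1\le k<l\le p+q}\omega_k\omega_l$, tight exactly on the face labelled $c_{p+1+r}\circ_{p+1}c_q$; and moving in the direction $-\vec e_i$ decreases the left-hand side of such an inequality precisely when the index set $\{p+1,\dots,p+q-1\}$ contains exactly one of $\{i,i+1\}$, i.e. when $i+1=p+1$ (so $i=p$) or $i=p+q-1$. So the obstruction to moving in direction $-\vec e_i$ is exactly that $x$ lies on a facet of one of these two ``boundary'' types. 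Next I would argue that $x$ cannot lie on such a facet: if $x$ were on the facet $c_{i+1+r}\circ_{i+1}c_q$ (the case $i=p$), then by \cref{prop:PropertiesKLoday}(4) every vertex of the minimal face of $\K_\omega$ containing $x$ — in particular every vertex of $F$, since $x\in\mathring F$ forces $F$ to lie in that facet — would be of the form $u\circ_{i+1}v$, and such a tree has its $(i+1)$-th leaf left-leaning, contradicting $L_i(t)=1$. The case $i=p+q-1$ is symmetric: being on that facet would force the $(i+1)$-th leaf to be left-leaning as well (the leaf labelled $i+1$ is the leftmost leaf of the grafted subtree $v$ at the relevant vertex, hence left-leaning). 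Therefore none of the facet inequalities whose tightness would block the move $-\vec e_i$ is active at $x$, and since $x\in\mathring F$ the only active inequalities are those defining $F$, all of which remain satisfied (strictly, for the irrelevant ones) after a sufficiently small perturbation. Hence $x-\varepsilon\vec e_i\in\K_\omega$ for small $\varepsilon>0$.

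The step I expect to be the main obstacle is the clean bookkeeping of which leaf indices correspond to which vertices and inputs when a tree is written as $u\circ_{p+1}v$: one must check carefully that the condition ``$(i+1)$-th leaf right-leaning'' is genuinely incompatible with membership in both boundary facet types $c_{i+1+r}\circ_{i+1}c_q$ and $c_{p+1+r}\circ_{p+1}c_{i+1-p}$, i.e. that for every planar binary tree lying in either facet the relevant leaf is forced to be left-leaning. This is an elementary but slightly fiddly combinatorial observation about where the ``deepest-left'' leaf of a grafted subtree sits; once it is pinned down, the rest follows formally from \cref{prop:PropertiesKLoday}. One should also double-check the edge case where $i$ or $i+1$ is an endpoint of a block (coordinate index $n_1+\cdots+n_j-j$), but this is excluded by the very definition of the vectors $\vec e_i$, which skip exactly those positions, so $\vec e_i$ always lies within a single block as assumed.
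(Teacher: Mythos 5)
Your approach is genuinely different from the paper's. The paper applies the fiber-polytope machinery of \cref{subsec:PolySubDiv} to the identity projection with $\psi(x)=\langle x,\vec e_i\rangle$, shows that the vertices of the complex of lower faces are exactly those with $L_i=0$ (using \cref{lemma:propertiesofL}(3) and a connectivity argument), and concludes that $\mathring F$ cannot meet $|\La^{\downarrow}(\tilde P)|$. You instead reduce to one factor $\K_\omega$ and argue directly with the $\mathcal H$-description from \cref{prop:PropertiesKLoday}(2)--(3). This more elementary route can be made to work, but the bookkeeping step you singled out as the main obstacle contains two errors that must be fixed.

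First, only one of the two boundary positions is actually an obstruction. Moving by $-\varepsilon\vec e_i$ subtracts $\varepsilon$ from $x_i$ and adds $\varepsilon$ to $x_{i+1}$, so the sum $x_{p+1}+\cdots+x_{p+q-1}$ \emph{decreases} only when $i$ is in the index interval and $i+1$ is not, i.e.\ only when $i=p+q-1$. In the case $i=p$ only $i+1=p+1$ lies in the interval, so the left-hand side \emph{increases} and the inequality is preserved: this facet is not an obstruction, and in fact it cannot be ruled out by your contradiction argument, because a tree $t=u\circ_{i+1}v$ has leaf $i+1$ equal to the leftmost leaf of $v$, which is a \emph{left} input and hence right-leaning ($\searrow$), giving $L_i(t)=1$, not $0$. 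So your claimed contradiction in the $i=p$ case is false; fortunately you do not need it.

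Second, for the case $i=p+q-1$ (the one that matters), your parenthetical is wrong: leaf $i+1=p+q$ of $t=u\circ_{p+1}v$ is the \emph{rightmost} leaf of $v$, not its leftmost. Your conclusion that this leaf is left-leaning, hence $L_i(t)=0$, is nonetheless correct, because the rightmost leaf is a right input and hence $\swarrow$ — but it is correct only by two compensating errors: you named the wrong leaf of $v$, and you also have the paper's leftmost-leaf/rightmost-leaf orientation convention reversed (with this convention the leftmost leaf of any binary tree is $\searrow$ and the rightmost is $\swarrow$; compare \cref{lemma:propertiesofL}(3) and the covering relation figure). With these two points repaired — only $i=p+q-1$ is an obstruction, and leaf $i+1$ is the rightmost leaf of $v$, hence $\swarrow$, hence $L_i(t)=0$ — your proof is a valid elementary alternative to the paper's lower-face argument.
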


\begin{proof}
It is enough to perform the proof for one Loday realization $P=\K_\omega$ that we endow with the linear form 
$\psi(x)\coloneqq \ang{x, \vec e_i}$.
We claim that, for the projection $\id : P \to P$, the associated subcomplexes of lower and upper faces, introduced in \cref{subsec:PolySubDiv}, are given by 
\[\Ve\Big(\La^{\downarrow}\big(\tilde P\big)\Big)=\left\{
 M(t,\omega)\mid L_i(t)=0 
\right\}
\quad 
\text{and}
\quad 
\Ve\Big(\La^{\uparrow}\big(\tilde P\big)\Big)=\left\{
 M(t,\omega)\mid L_i(t)=1
\right\}\ .
\]
The intersection $L_i^{-1}(0)\cap \La^{\downarrow}\big(\tilde P\big)$ 
is nonempty, since it contains the  vertex $\bm P$. Suppose that the polytopal complex 
$\La^{\downarrow}\big(\tilde P\big)$ contains a vertex living in $L_i^{-1}(1)$. By the connectivity of $\La^{\downarrow}\big(\tilde P\big)$, it admits an edge with vertices $s\in L_i^{-1}(0)$ and $t\in L_i^{-1}(1)$ . 
Point~(3) of \cref{lemma:propertiesofL} says that such an edge is parallel to $\vec e_i$, which contradicts the minimality of  $\La^{\downarrow}\big(\tilde P\big)$. 
This proves the inclusion 
$\Ve\big(\La^{\downarrow}\big(\tilde P\big)\big)
\subset
\left\{
 M(t,\omega)\mid L_i(t)=0 
\right\}$.
The opposite inclusion is proved by the same argument as for 
$\La^{\uparrow}\big(\tilde P\big)$
together with the fact that 
the union $\La^{\downarrow}\big(\tilde P\big)\cup \La^{\uparrow}\big(\tilde P\big)$ contains all the vertices of $P$. 
If not, a vertex $x$, which is not contained in it, is neither maximal nor minimal with respect to $\vec e_i$, which contradicts the fact that $x$ is an extremal point.
This characterization of the polytopal complex of lower faces shows that any face $F$ of $P$ containing a vertex $s$ such that $L_i(s) = 1$ satisfies 
$ F\not \subset  \big\vert\La^{\downarrow}\big(\tilde P\big)\big\vert$, and thus 
$\mathring F \cap \big\vert\La^{\downarrow}\big(\tilde P\big)\big\vert=\emptyset$, which concludes the proof. 
\end{proof}

\begin{proposition}\label{prop:FirstExclusion}
Let $F$ and $G$ be two faces of $P$ of matching dimensions, i.e. $\dim F+\dim G=d$. We consider 
$s\coloneqq \tp F$ and $t\coloneqq \bm G$.
When $L(s)\not\leq L(t)$, we have $\mathring{F}\times \mathring{G}\cap \im\triangle = \emptyset$.
\end{proposition}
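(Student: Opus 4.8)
The plan is to use the normal-cone description of the image of the diagonal recalled in the proof of \cref{Prop:NormalFan}: a pair $(x,y)\in P\times P$ lies in $\im\triangle$ if and only if there is \emph{no} vector $\vec w$ and no $\varepsilon>0$ with $\ang{\vec v,\vec w}>0$ and $x-\varepsilon\vec w,\, y+\varepsilon\vec w\in P$. Accordingly, to prove that $\mathring F\times\mathring G$ misses $\im\triangle$, it suffices, given arbitrary $x\in\mathring F$ and $y\in\mathring G$, to exhibit one such pair $(\vec w,\varepsilon)$.

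First I would use the hypothesis $L(s)\not\leq L(t)$ to pick an index $i$, with $1\leq i\leq d$, such that $L_i(s)=1$ and $L_i(t)=0$, and then set $\vec w\coloneqq\vec e_i$. By Point~(2) of \cref{lemma:propertiesofL} this already gives $\ang{\vec e_i,\vec v}>0$, so only the two membership conditions remain. Since $F$ contains its top vertex $s=\tp F$ with $L_i(s)=1$, \cref{lem:FACE} supplies an $\varepsilon_1>0$ with $x-\varepsilon_1\vec e_i\in P$. For the other coordinate I would invoke the upward counterpart of \cref{lem:FACE}: the face $G$ contains its bottom vertex $t=\bm G$ with $L_i(t)=0$, so the characterization $\Ve(\La^{\uparrow}(\tilde P))=\{M(u,\omega)\mid L_i(u)=1\}$ established inside the proof of \cref{lem:FACE} yields, by the same argument with $\La^{\downarrow}$ and $\La^{\uparrow}$ (equivalently $-\vec e_i$ and $+\vec e_i$) interchanged, an $\varepsilon_2>0$ with $y+\varepsilon_2\vec e_i\in P$.

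Finally, with $\varepsilon\coloneqq\min(\varepsilon_1,\varepsilon_2)$, convexity of $P$ gives $x-\varepsilon\vec e_i\in P$ (it lies on the segment from $x\in P$ to $x-\varepsilon_1\vec e_i\in P$) and likewise $y+\varepsilon\vec e_i\in P$; thus $\vec w=\vec e_i$ together with this $\varepsilon$ meets all three requirements, so $(x,y)\notin\im\triangle$, and since $x\in\mathring F$, $y\in\mathring G$ were arbitrary, $\mathring F\times\mathring G\cap\im\triangle=\emptyset$. Note that the matching-dimension hypothesis $\dim F+\dim G=d$ is not used here; it is recorded only because these are the pairs relevant to the magical formula. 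The argument is short precisely because all the geometric content sits in \cref{lem:FACE}, and the only point that needs care is checking that \cref{lem:FACE} dualizes correctly — that is, that the characterization of lower faces used there is genuinely symmetric under swapping left-leaning with right-leaning leaves (equivalently $L_i$ with $1-L_i$) and $\vec e_i$ with $-\vec e_i$. This symmetry is already manifest in the proof of \cref{lem:FACE}, where the $\La^{\uparrow}$ statement is derived in parallel with the $\La^{\downarrow}$ one, so I do not anticipate any real obstacle.
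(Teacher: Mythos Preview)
Your proof is correct and follows essentially the same route as the paper: pick $i$ with $L_i(s)=1$, $L_i(t)=0$, use \cref{lem:FACE} (and its evident dual) to push $x$ down and $y$ up along $\vec e_i$ inside $P$, and conclude from $\ang{\vec e_i,\vec v}>0$. The only cosmetic difference is that you phrase the obstruction via the normal-cone criterion from the proof of \cref{Prop:NormalFan}, whereas the paper argues directly that $(x-\varepsilon\vec e_i,\,y+\varepsilon\vec e_i)$ lies in the same fiber of $\beta$ with strictly smaller $\psi$-value; these are equivalent. Your explicit remark that the dual of \cref{lem:FACE} is needed for the $y$-coordinate, and that the matching-dimension hypothesis is unused, are both accurate and slightly sharper than the paper's one-line invocation.
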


\begin{proof}
When $L(s)\not\leq L(t)$, there exists $i$ such that $L_i(s) = 1$ and $L_i(t) = 0$. 
By \cref{lem:FACE}, 
 for every $x\in \mathring{F}$ and $y\in \mathring{G}$, there exists $\varepsilon>0$ such that $(x-\varepsilon \vec e_i, y+\varepsilon \vec e_i) \in P\times P$. 
Suppose now that  there exists $(x,y)\in \mathring{F}\times \mathring{G} \cap \im \triangle$. 
Since $(x+y)/2 = ((x-\varepsilon\vec e_i)+(y+\varepsilon\vec e_i))/2$, the two points $(x, y)$ and $(x-\varepsilon\vec e_i, y+\varepsilon\vec e_i)$ lie in the same fiber of $\beta$. As we saw in the proof of
\cref{prop:DiaginPoly}, the point $(x,y)$ minimizes $\ang{x, \vec v}$ in the fiber of $\beta$.
However, the computation 
\[ 
\ang{\vec v, x-\varepsilon \vec e_i}=\ang{\vec v, x}-\varepsilon\underbrace{\ang{\vec v, \vec e_i}}_{>0}
<\ang{\vec v, x}
\]
violates the definition of $\triangle$. 
\end{proof}

\cref{prop:FirstExclusion} excludes 
faces $F\times G$ of matching dimensions with  $L(s)\not\leq L(t)$
from the image of $\triangle$. 
Notice that, by Point~(1) of \cref{lemma:propertiesofL}, $L(s)\not\leq L(t)$ implies $s\not \leq t$.
In order to exclude the remaining case when $s\not\leq t$ and $L(s)\leq L(t)$, we prepare the following two lemmas.

\begin{lemma}\label{lem:UniqueFG}
Let $t\in  \Tam{n_1}\times \cdots \times \Tam{n_k}$ be a forest of  planar binary trees with a total of $r_t+k$ right-leaning leaves and $l_t+k$ left-leaning leaves. There exists a unique maximal (with respect to inclusion) face $F_t$ (resp. $G_t$) of $P$  such that $\tp F = t$ (resp. $\bm G = t$). The dimensions of these faces are given by $\dim F_t =l_t$ and $\dim G_t = r_t$.
\end{lemma}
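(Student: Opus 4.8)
\emph{Approach.} The plan is to reduce at once to the case of a single Loday realization and to argue combinatorially with planar trees. A face $F$ of $P=\K_{\omega_1}\times\cdots\times\K_{\omega_k}$ is a product of faces of the $\K_{\omega_j}$, each of which, by Point~(5) of \cref{prop:PropertiesKLoday} together with \cref{prop:OrientationVector}, is again a well-oriented product of Loday realizations whose vertex poset is the corresponding product of Tamari lattices. Hence if $F$ is labelled by the forest $u=(u_1,\dots,u_k)$, then $\tp F$ is computed componentwise: each $\tp F_{u_j}$ is the Tamari-greatest binary refinement of $u_j$, which is the binary tree $R(u_j)$ obtained from $u_j$ by replacing every internal vertex with children $c_1,\dots,c_m$ (read left to right) by the right comb $(c_1(c_2(\cdots(c_{m-1}c_m)\cdots)))$. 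So the faces with $\tp F=t$ form a poset isomorphic to $\prod_j\{u : R(u)=t_j\}$ ordered by edge contraction, and it is enough to show that for a single binary tree $t$ with $n$ leaves there is a unique $\subset$-maximal planar tree with right-comb completion $t$, and to compute the dimension of the associated face; the product statement and the dual statement for $\bm$ (with right combs replaced by left combs) then follow formally.

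\emph{Construction.} Given a binary tree $t$, I would take $u_t$ to be the planar tree obtained from $t$ by contracting every internal edge joining a vertex to its right child, that is, by collapsing each maximal right comb of $t$ to a single vertex. Right-comb completing such a collapsed vertex reproduces precisely the corresponding right comb of $t$, so $R(u_t)=t$ by direct inspection, and therefore $\tp F_{u_t}=t$.

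\emph{Maximality — the main point.} It remains to show that $F_{u_t}$ is the largest face with top vertex $t$, i.e.\ that every planar tree $u$ with $R(u)=t$ satisfies $u\subset u_t$. Since $R(u)=t$ forces $t$ to be a binary refinement of $u$, we have $u=t/S$ for some set $S$ of internal edges of $t$, and it suffices to see that $S$ consists only of edges from a vertex to its right child. Suppose $S$ contained an edge $(w,v)$ with $w$ the \emph{left} child of $v$ in $t$. The vertex $v$ is the common ancestor of a fixed adjacent pair of leaves $p,p+1$ (the last leaf below $w$ and the first leaf below the right child of $v$), and in $R(u)=t$ the vertex at that gap is again $v$. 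One checks that in $R(u)$ the left subtree at $v$ equals the right-comb completion of the subtree of $u$ rooted at the child of the block containing $v$ whose leaves end at $p$; but that child is, inside $t$, a \emph{proper} subtree of $w$, hence has strictly fewer leaves than the subtree of $t$ rooted at $w$. Since $R$ preserves leaf counts, this contradicts $R(u)=t$. Therefore $S$ contains only right-child edges, so $u=t/S\subset t/\{\text{all right-child edges}\}=u_t$, which proves uniqueness of the maximal face $F_t:=F_{u_t}$. I expect this last bookkeeping — pinning down the vertex $v$ simultaneously in $t$, in $u=t/S$ and in $R(t/S)$, and then comparing leaf counts — to be the only genuinely delicate step; everything else is formal.

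\emph{Dimension and conclusion.} Finally, $F_{u_t}$ has codimension equal to the number of internal edges of $u_t$, hence to $(n-2)$ minus the number of edges contracted; thus $\dim F_{u_t}$ equals the number of internal edges of $t$ joining a vertex to its right child, which is $(n-1)$ (one right child per internal vertex) minus the number of right children that are leaves, namely the last leaf of $t$ together with the $r_t$ interior leaves it contributes to $r_t$. Using $l_t+r_t=n-2$ this gives $\dim F_{u_t}=(n-1)-(1+r_t)=l_t$, and the mirror-image argument with left combs gives $\dim G_t=r_t$. For general $k$ one has $F_t=\prod_j F_{t_j}$ and $G_t=\prod_j G_{t_j}$, unique because each factor is, with $\dim F_t=\sum_j l_{t_j}=l_t$ and $\dim G_t=\sum_j r_{t_j}=r_t$.
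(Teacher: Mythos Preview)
Your proof is correct and follows the same construction as the paper: collapse all internal edges of $t$ joining a vertex to its right child (what the paper calls the left-leaning internal edges) to obtain the labelling tree of $F_t$, and then count. The paper's own proof is a terse two sentences that merely states this construction and the edge/leaf identity without justifying maximality, so your explicit argument---that any $u$ with right-comb completion $R(u)=t$ can arise from $t$ only by contracting right-child edges---fills in precisely the step the paper leaves to the reader.
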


\begin{proof}
The cell $F_t$ (resp. $G_t$) can be obtained by collapsing all the left-leaning (resp. right-leaning) internal edges of all the  trees of the forest $t$. One can see that, for any forest of planar binary trees, the number of left-leaning (resp. right-leaning) internal edges is equal to the number of right-leaning (resp. left-leaning) leaves minus $k$. 
\end{proof}

\begin{lemma}\label{lemma:Matching}
Let $F,G\subset P$ be a  pair of faces of matching dimensions. 
When  $s\coloneqq \tp F$ and $t\coloneqq \bm G$ satisfy $L(s)\leq L(t)$, then $(F, G) = (F_s, G_t)$ and $L(s)=L(t)$. 
\end{lemma}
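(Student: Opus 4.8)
The plan is to prove \cref{lemma:Matching} by combining the dimension count with the characterization of maximal faces from \cref{lem:UniqueFG}. First I would record the key numerical fact, already implicit in \cref{lem:UniqueFG}: for a forest $u$ of planar binary trees with a total of $r_u+k$ right-leaning leaves and $l_u+k$ left-leaning leaves, one has $r_u + l_u = d$, since the total number of non-extreme leaves of each tree $c$ with $n_c$ leaves is $n_c-2$, and summing gives $n_1+\cdots+n_k-2k = d$. Applying this to $s = \tp F$ we get $\dim F_s = l_s$ and, since $F \subset F_s$, we have $\dim F \leq l_s = d - r_s$; similarly $\dim G \leq r_t = d - l_t$ where I write $l_t+k$, $r_t+k$ for the numbers of left- and right-leaning leaves of $t$. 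Here $r_u$ is exactly the number of coordinates $i$ with $L_i(u) = 1$, i.e. the Hamming weight $|L(u)|$, and $l_u = d - |L(u)|$.

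Next I would feed in the hypothesis $L(s) \leq L(t)$ in the Boolean lattice, which says $|L(s)| \leq |L(t)|$, i.e. $r_s \leq r_t$, equivalently $l_t \leq l_s$. Then the matching-dimension hypothesis $\dim F + \dim G = d$ forces both inequalities above to be equalities: from $\dim F + \dim G \leq (d - r_s) + (d - l_t) = d + (d - r_s - l_t)$ and $d - r_s - l_t = l_s - l_t \geq 0$, we get $d \leq d + (l_s - l_t)$ with the two face-dimension bounds both tight only if $l_s = l_t$; chasing this, $\dim F = d - r_s = l_s$ and $\dim G = d - l_t = r_t$, and moreover $l_s = l_t$, hence also $r_s = r_t$, so $L(s)$ and $L(t)$ have the same Hamming weight. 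Combined with $L(s) \leq L(t)$ in the Boolean lattice, equal weight forces $L(s) = L(t)$, which is the second assertion.

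It remains to identify $F$ with $F_s$ and $G$ with $G_t$. Since $F \subseteq F_s$ and $\dim F = l_s = \dim F_s$, and both are faces (hence $F$ is a face of the polytope $F_s$ of the same dimension), we get $F = F_s$; symmetrically $G = G_t$. I should double-check the direction of the order relation on faces of the associahedron — faces correspond to planar (not necessarily binary) trees under $\subset$ (edge contraction), and a binary tree $u$ with $\tp F = u$ sits at a vertex of $F$, so collapsing the left-leaning internal edges of $u$ yields the largest face with top vertex $u$, which is precisely the content of \cref{lem:UniqueFG}. The main obstacle, and the only genuinely delicate point, is making the inequality bookkeeping airtight: one must be careful that $\dim F \leq l_s$ uses the \emph{maximality} of $F_s$ together with $\tp F = s$, and that the monotonicity $s \leq t \Rightarrow L(s) \leq L(t)$ is \emph{not} what is used here — rather it is the converse direction, turning the hypothesis on $L$ into the weight inequality. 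Once the equalities $\dim F = l_s$, $\dim G = r_t$, $l_s = l_t$ are forced, everything else is immediate from \cref{lem:UniqueFG}.
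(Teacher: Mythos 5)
Your strategy---bounding $\dim F + \dim G \leq \dim F_s + \dim G_t$ via \cref{lem:UniqueFG}, translating $L(s)\leq L(t)$ into a weight inequality, and forcing all inequalities to be equalities---is exactly the paper's. But your inequality chain does not close. Using the formula $\dim F_s = l_s$, $\dim G_t = r_t$ stated in \cref{lem:UniqueFG} together with your (correct) identifications $r_u = \abs{L(u)}$, $l_u = d - \abs{L(u)}$, the hypothesis $L(s)\leq L(t)$ gives $r_s\leq r_t$, hence $l_s\geq l_t$, hence $l_s + r_t = d + (l_s - l_t) \geq d$. So your chain reads $d = \dim F + \dim G \leq l_s + r_t$, which is vacuous and forces nothing; in particular the step ``with the two face-dimension bounds both tight only if $l_s = l_t$'' is a non sequitur, since nothing prevents $\dim F < l_s$ and $\dim G < r_t$ simultaneously with sum exactly $d$ when $l_s + r_t > d$.

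The source of the trouble is a label swap in the statement of \cref{lem:UniqueFG}: the dimensions should read $\dim F_t = r_t$ and $\dim G_t = l_t$. This is what that lemma's own proof establishes---$F_t$ is obtained by collapsing the left-leaning internal edges of $t$, of which there are (number of right-leaning leaves) $-\,k = r_t$---and you can check it directly on $\K_3$: $F_{\TreeR}=\K_3$ has dimension $1 = r_{\TreeR} = \abs{L(\TreeR)}$, while $l_{\TreeR} = 0$. With the corrected formula $\dim F_s = r_s = \abs{L(s)}$ and $\dim G_t = l_t = d - \abs{L(t)}$, your same chase becomes
\[
\dim F + \dim G \ \leq\ \dim F_s + \dim G_t \ =\ r_s + l_t \ \leq\ r_t + l_t \ =\ d\ ,
\]
where the second inequality is $r_s\leq r_t$. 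Now the matching-dimension hypothesis forces equality throughout: $\dim F = \dim F_s$ and $F\subseteq F_s$ give $F=F_s$, similarly $G=G_t$, and $r_s=r_t$ together with $L(s)\leq L(t)$ gives $L(s)=L(t)$. The paper's one-line proof is this same chain; its ``$l_s+r_t\leq d$'' carries the identical label swap but refers to the intended quantities $\dim F_s + \dim G_t$. You should have noticed that your version of the inequality points the wrong way---that was the signal that the formula you imported needed correcting rather than papering over.
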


\begin{proof}
By definition,  $F\subset F_s$ and $G\subset G_t$, and thus $\dim F + \dim G\leq l_s + r_t\leq d$ by \cref{lem:UniqueFG}. 
The top dimension assumption  $\dim F+\dim G=d$ allows us to conclude that 
$\dim F=l_s$, $\dim G=r_t$,  $F=F_s$, $G=G_t$, and $L(s)=L(t)$.
\end{proof}

We can now conclude Step 1: we prove by induction on the dimension $d$ of $P$ that any pair $F, G\subset P$ of  faces of matching dimensions with $s\not\leq t$ and $L(s)\leq L(t)$ satisfies 
${F}\times {G}\not\subset \im\triangle$.
This is straightforward to check in dimensions $d=0$ and $d=1$. 
In dimension $d$, let us suppose, to the contrary, that there exists such a pair $F,G$ of faces satisfying ${F}\times {G}\subset \im\triangle_{(P, \vec v)}$.
\cref{lemma:Matching} implies $L(s)=L(t)$.
In this case, both points $s$ and $t$ lie in a common facet $Q$ of $P$ by Point~(4) of \cref{lemma:propertiesofL}.
By \cref{Lemma:DiagOnFace}, the induced diagonal  $\triangle_{(Q,\vec v)}$ on $Q$  is the restriction of ${\triangle_{(P, \vec v)}}$. 
Let us consider $F'\coloneqq F\cap Q$ and $G'\coloneqq G\cap Q$. If $\dim F'+\dim G'> \dim Q$, then 
$F'\times G'\not \subset \triangle_{(Q,\vec v)}$ by \cref{cor:DIMENSION}. When $\dim F'+\dim G'\leq \dim Q$, we consider any pair of faces $F'\subset F''$ and $G'\subset G''$ of $Q$ of matching dimensions: $\dim F''+\dim G''=\dim Q$. 
They satisfy $r\coloneqq\tp F''\geq \tp F'=s$ and $u \coloneqq\bm G''\leq \bm G'=t$, which implies $r \not \leq u$. 

We claim that ${F''}\times {G''}\not\subset \im\triangle_{(Q,\vec v)}$.
Since $Q$ is a  facet of $P$, it is of type 
$Q'\coloneqq K_{\bar \omega_1}\times K_{\tilde \omega_1}\times K_{\omega_2} \times \cdots \times K_{\omega_k}\cong Q$, 
under the isomorphism $\Theta : \RR^N \cong \RR^N$ given by the  block permutation of coordinates described in the proof of Point~(5) of \cref{prop:PropertiesKLoday}. The image of the orientation vector $\vec v=(v_1, \ldots, v_{n_1-1}, v_{n_1},  \ldots)$ 
under the inverse permutation of coordinates  is equal to 
\[\vec v':=\Theta^{-1}(\vec v)=(v_1, \ldots, v_{i-1}, v_{i+m-1}, \ldots, v_{n_1-1}, v_i, \ldots, v_{i+m-2}, v_{n_1}, \ldots, )\ ,\]
so it well-orients $Q'$. 
 Therefore the isomorphism $\Theta$ intertwines the two diagonals $\triangle_{(Q,\vec v)}$ and $\triangle_{(Q',\vec v')}$. Let us denote by $r=(r_1, \ldots, r_k)$ and $u=(u_1, \ldots, u_k)$ the various planar trees composing the two forests. Under the notation of the proof of 
 Point~(5) of \cref{prop:PropertiesKLoday},
  one can write the two planar trees 
 $r_1=\bar r_1\circ_i \tilde r_1$ and $u_1=\bar u_1\circ_i \tilde u_1$. This gives 
 \[r'\coloneqq \tp \Theta^{-1}(F'')=(\bar r_1, \tilde r_1, r_2, \ldots, r_k)
\qquad \text{and} \qquad  u' \coloneqq \bm  \Theta^{-1}(G'')=(\bar u_1, \tilde u_1, u_2, \ldots, u_k) \ .
 \]
We have $r'\not\leq u'$. Indeed, the condition $r\not \leq u$ implies that there exists $1\leq j\leq k$ such that $r_j \not \leq  u_j$. If $j\neq 1$, then automatically $r'\not\leq u'$. If $j=1$, then either $\bar r_1 \not \leq \bar u_1$ or 
$\tilde r_1 \not \leq \tilde u_1$, since $\circ_i$ preserves the Tamari order, and again $r'\not\leq u'$. 
If  $L(r')\not\leq L(u')$,
 we conclude with \cref{prop:FirstExclusion}, otherwise we conclude our claim with the induction hypothesis. 
Finally, notice that any cell appearing in $\im \triangle$ is contained in a product of cells of matching dimensions. The above argument shows that $F'\times G'$ cannot appear in $\im \triangle_{(Q,\vec v)}$. This concludes the first step. 

\subsection{Second step: $\im\triangle_n\supset \bigcup F\times G$}
In this section, we prove that  every matching pair $(F, G)$ of $\K_n$ satisfies $F\times G\subset \im\triangle_n$. 
By Point (1) of \cref{lemma:propertiesofL} and by \cref{lemma:Matching} such pairs are of type 
$(F_s, G_t)$ with $s\leq t$ and $L(s)=L(t)$.

\begin{proposition}\label{prop:FtGt}
For every $t\in \Tam{n}$, we have $F_t\times G_t\subset \im \triangle_n$. 
\end{proposition}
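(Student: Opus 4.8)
The goal is to show $F_t \times G_t \subset \im \triangle_n$ for every $t \in \Tam{n}$, where by Lemma~\ref{lem:UniqueFG} the face $F_t$ is obtained from $t$ by contracting all left-leaning internal edges and $G_t$ by contracting all right-leaning internal edges. Since $F_t \times G_t$ is a cell of dimension $l_t + r_t = n-2$, and $\im\triangle_n$ is closed, it suffices to produce, for each $z$ in a dense subset of $F_t \times G_t$, a point $z' \in \K_n$ with $\triangle_n(z') = z$; equivalently, by \cref{def:Diag} and the discussion after it, a point $z'$ such that $\bm(\K_n \cap \rho_{z'}\K_n) = x$ and $\tp(\K_n\cap \rho_{z'}\K_n) = y$, where $z = (x,y)$. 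The natural candidate is the midpoint: set $z' \coloneqq \beta(x,y) = \frac{x+y}{2}$, so that $(x,y)$ automatically lies in the fiber $\beta^{-1}(z')$. What must then be verified is that $(x,y)$ is exactly the $\psi$-minimizing pair in that fiber, i.e.\ that $x$ is the bottom and $y$ the top of $\K_n \cap \rho_{z'}\K_n$ with respect to $\vec v$.

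First I would reduce to checking the two extreme conditions separately. By the characterization recalled in the proof of \cref{Prop:NormalFan}: a pair $(x,y)$ with $x \in \mathring F$, $y \in \mathring G$ lies in $\im\triangle_n$ if and only if there is no vector $\vec w$ with $\ang{\vec v, \vec w} > 0$ and $\vec w \in -\No_{\K_n}(F)^\star \cap \No_{\K_n}(G)^\star$; that is, no direction $\vec w$ positively paired with $\vec v$ along which one can simultaneously push $x$ down into $\K_n$ and $y$ up into $\K_n$. So the key point is to understand the tangent cones $\No_{\K_n}(F_t)^\star$ and $\No_{\K_n}(G_t)^\star$. Using Point~(2) of \cref{prop:PropertiesKLoday}, the facet inequalities active on $F_t$ are those indexed by the intervals arising from the internal edges of the tree defining $F_t$ (the left-contraction of $t$), and similarly for $G_t$. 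I would describe $-\No(F_t)^\star$ as the cone of directions $\vec w$ along which $x$ can move while respecting those active inequalities, and $\No(G_t)^\star$ as the analogous cone for $G_t$, then show their intersection contains no $\vec w$ with $\ang{\vec v,\vec w}>0$. The combinatorial input here is that $\tp F_t = t = \bm G_t$: the vertex $t$ is simultaneously the $\vec v$-maximum of $F_t$ and the $\vec v$-minimum of $G_t$, which should force the two cones to meet only in directions $\vec w$ with $\ang{\vec v,\vec w} \le 0$.

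Concretely, I expect the cleanest route is to pass through the explicit combinatorial/operadic structure rather than raw cone computations. Both $F_t$ and $G_t$, being faces of a Loday realization, are (via block permutations of coordinates, Point~(5) of \cref{prop:PropertiesKLoday}) products of smaller Loday realizations, and $F_t$ has $t$ as its top vertex while $G_t$ has $t$ as its bottom vertex. I would proceed by induction on $n$, peeling off a facet $Q$ containing $t$ (which exists once $n \ge 3$ since $t$ is not simultaneously $\tp\K_n$ and $\bm\K_n$ for a matching pair of positive total dimension), using \cref{Lemma:DiagOnFace} to restrict $\triangle_n$ to $Q$, and applying \cref{prop:FtGt} inductively to the product-of-associahedra facet $Q$ — here I would need the product version of the statement, proved exactly as the inductive setup of Step 1 handles products $P = \K_{\omega_1}\times\cdots\times\K_{\omega_k}$. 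The base cases $n \le 2$ are trivial. The surviving case is when $t$ cannot be pushed off the main axis, i.e.\ when $F_t \times G_t$ meets the ``central'' cell $(\bm\K_n,\K_n,\dots) $-type piece; there one computes directly with the midpoint and the formula for $M(t,\omega)$.

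\textbf{Main obstacle.} The hard part will be the last case — establishing that the midpoint $z' = \frac{M(s,\omega)+M(t,\omega)}{2}$-type point genuinely realizes $(x,y)$ as the $(\bm,\tp)$ pair of $\K_n \cap \rho_{z'}\K_n$, i.e.\ ruling out \emph{every} competing direction $\vec w$ with $\ang{\vec v,\vec w}>0$ that lies in $-\No(F_t)^\star \cap \No(G_t)^\star$. This is where $L(s)=L(t)$ and the precise correspondence between internal edges of $t$ and active facets must be used: one has to show that pushing $x \in \mathring F_t$ in any $\vec v$-positive direction immediately leaves some facet of $\K_n$ that is \emph{not} active on $F_t$ but whose violation cannot be compensated by the simultaneous $\vec v$-positive push of $y \in \mathring G_t$ out of $\K_n$. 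Untangling this via the interval combinatorics of \cref{prop:PropertiesKLoday}(2) — essentially that the left-leaning internal edges of $t$ (defining $F_t$) and the right-leaning ones (defining $G_t$) together ``use up'' all $n-2$ degrees of freedom in a complementary way — is the technical heart, and I expect the cleanest phrasing to be in terms of the vectors $\vec e_i$ of \cref{lemma:propertiesofL} and a careful bookkeeping of which coordinates each $\vec e_i$ moves relative to the active intervals.
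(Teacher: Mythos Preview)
Your approach is far more laborious than necessary, and the inductive reduction you sketch has a gap. The paper's proof is two sentences, and it uses Step~1 rather than redoing its work.

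The observation you are missing is this: for any vertex $t \in \Tam{n}$, the point $z = M(t,\omega)$ is a vertex of $\K_n$, so $\K_n \cap \rho_z \K_n = \{z\}$ and hence $\triangle_n(z) = (z,z)$; thus $(t,t) \in \im\triangle_n$ trivially. Now $\im\triangle_n$ is, by \cref{prop:DiaginPoly}, a union of top-dimensional cells $F\times G$ from the tight coherent subdivision, and Step~1 (\cref{Subsec:Subset}) has already shown that every such cell is a matching pair. If $(t,t) \in F\times G$ for a matching pair, then $t\in\Ve(F)$ and $t\in\Ve(G)$ give $t \le \tp F \le \bm G \le t$, so $\tp F = \bm G = t$; then $F \subset F_t$, $G\subset G_t$ by \cref{lem:UniqueFG}, and the dimension constraint $\dim F+\dim G = n-2 = l_t+r_t$ forces $(F,G) = (F_t, G_t)$. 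Hence $F_t\times G_t$ is the unique matching-pair cell containing $(t,t)$, and therefore lies entirely in $\im\triangle_n$.

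Regarding your plan: the step ``peel off a facet $Q$ containing $t$ and restrict via \cref{Lemma:DiagOnFace}'' requires both $F_t$ and $G_t$ to lie in that same facet $Q$, but these faces are obtained by contracting complementary sets of internal edges of $t$ and in general do not share a common facet of $\K_n$. So the induction does not go through as stated. The normal-cone computation you flag as the ``main obstacle'' could in principle be pushed through directly, but it is precisely the kind of case analysis the two-line argument above renders unnecessary.
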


\begin{proof}
By the pointwise formula $(t, t)\in \im\triangle_n$. But $F_t\times G_t$ is the only cell coming from a matching pair that can contain $(t, t)$ by \cref{Subsec:Subset}.
\end{proof}

\begin{lemma}\label{lem:FacetGuGt}
Let  $t\prec u$ be an edge of $\K_n$ satisfying $L(t) = L(u)$. 
\begin{enumerate}
\item The cell $G_t\cap G_u$ is a facet of $G_t$ and $G_u$. 

\item The cell $G_t\cap G_u$ is not of type $G_v$, for $v\in \Tam{n}$.

\item If $G_t\cap G_u$ is a facet of $G_v$, then $v=t$ or $v=u$.
\end{enumerate}
\end{lemma}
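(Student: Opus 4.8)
The plan is to extract everything from the combinatorial description of the maximal faces $F_v$ and $G_v$ provided by \cref{lem:UniqueFG}: $G_v$ is obtained from the forest $v$ by collapsing all right-leaning internal edges, and its dimension equals $r_v$, the number of right-leaning leaves minus one (we work with a single Loday realization $\K_n$, so $k=1$). First I would unpack the hypothesis $t\prec u$ with $L(t)=L(u)$: the covering relation $t\prec u$ rotates a single internal edge, turning a left-leaning internal edge of $t$ into a right-leaning internal edge of $u$ (so $r_u = r_t+1$, hence $\dim G_u = \dim G_t+1$), and the condition $L(t)=L(u)$ says the rotation does not change the leaf-leaning pattern, i.e. the rotated edge is "internal" in the strong sense that both of its endpoints carry subtrees with more than one leaf below. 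This is the key structural fact that makes the two maximal faces $G_t$ and $G_u$ interact cleanly.

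For Point~(1) I would argue as follows. Collapsing all right-leaning internal edges of $t$ and of $u$ gives the two trees labeling $G_t$ and $G_u$; call them $\bar t$ and $\bar u$. The rotation at the distinguished edge $e$ of $t$ together with $L(t)=L(u)$ forces $\bar t$ and $\bar u$ to differ only at the "spot" where $e$ lived: after collapsing the right-leaning internal edges, the tree $\bar u$ is obtained from $\bar t$ by contracting exactly one further (left-leaning) edge, or rather $\bar t$ is obtained from $\bar u$ by contracting one edge — in either case the common refinement $G_t\cap G_u$ corresponds to a tree with exactly one more internal edge than $\bar t$ and than $\bar u$, so it has codimension one in each. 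One then checks $G_t\cap G_u$ is genuinely a face of both (not just of one) by exhibiting the labeling tree as a common contraction, using $\La(\K_n)\cong(\PT n,\subset)$ from Point~(4) of \cref{prop:PropertiesKLoday}: a tree $w$ labels a face of $G_v$ iff $\bar v\subset w$, and here the candidate $w$ contracts to both $\bar t$ and $\bar u$. Point~(2) follows from a parity/edge-type count: the trees $G_v$ are precisely those with no left-leaning internal edge remaining, whereas the labeling tree of $G_t\cap G_u$ retains exactly one left-leaning internal edge (coming from the distinguished spot), so it cannot be of the form $\bar v$.

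Point~(3) is where I expect the real work. Suppose $G_t\cap G_u = w$ is a facet of some $G_v$, i.e. $\bar v \subset w$ and $w$ has exactly one more internal edge than $\bar v$, so $\bar v$ is obtained from $w$ by contracting a single edge. We know $w$ has exactly one left-leaning internal edge (the distinguished spot) and all its other internal edges are right-leaning; since $\bar v$ must have \emph{no} left-leaning internal edge, the edge contracted to pass from $w$ to $\bar v$ must be that unique left-leaning edge. Contracting a left-leaning internal edge of a tree can be "undone" in exactly two ways by a rotation — the rotation recreating the edge as left-leaning (recovering $t$-side data) or ... wait, more carefully: the tree $w$ sits between $\bar t$ and $\bar u$, and both $\bar t$ and $\bar u$ are obtained from $w$ by contracting that one left-leaning edge but resolving the resulting vertex differently; so $\bar v$ equals $\bar t$ or $\bar u$, hence $v\in\{t,u\}$ by the uniqueness in \cref{lem:UniqueFG}. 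The main obstacle is making precise the claim that $w$ has a unique left-leaning internal edge and that its two "resolutions after contraction" are exactly $\bar t$ and $\bar u$; this requires carefully tracking the local shape of the forest near the rotated edge and using $L(t)=L(u)$ to rule out that the collapse of right-leaning edges interferes with the distinguished spot. I would handle this with a small explicit picture of the local subtree (a caterpillar of the form governed by the covering relation in the definition of the Tamari order) rather than a general argument.
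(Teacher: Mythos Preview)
Your overall strategy coincides with the paper's: describe the tree $s$ labeling $G_t\cap G_u$ explicitly, characterise which planar trees arise as some $\bar v$, and for Point~(3) determine which single-edge contractions of $s$ land in that class. However, two of your intermediate claims are false and the second one breaks the argument for Point~(3).

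First, under the hypothesis $L(t)=L(u)$ one has $r_u=r_t$, not $r_u=r_t+1$. The rotation flips \emph{two} edge-leanings in the local picture: the internal edge between the two vertices goes from left-leaning to right-leaning, but the edge to the middle subtree $t_2$ goes from right-leaning to left-leaning (and $L(t)=L(u)$ says precisely that $t_2$ is not a leaf, so this edge is internal). Hence $\dim G_t=\dim G_u$, which is what allows $G_t\cap G_u$ to be a facet of both simultaneously. In particular neither of $\bar t,\bar u$ is a one-edge contraction of the other: the correct picture is that $s$ has one more internal edge than each, with $\bar t=s/a$ and $\bar u=s/b$ for two \emph{distinct} edges $a,b$ of $s$. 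The paper names these edges $e'$ and $f'$.

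The real gap is in Point~(3). You argue that since $s$ has a unique leftmost-child internal edge and $\bar v$ must have none, the contracted edge must be that one, and then try to extract two outcomes from a single contraction by speaking of ``resolving the resulting vertex differently''. Contracting an edge is deterministic: contracting the unique leftmost-child edge of $s$ yields exactly one tree, namely $\bar u$. What you are missing is the \emph{second} way to eliminate the offending edge. If $Z$ denotes the vertex of $s$ whose leftmost child is the (non-leaf) subtree $\bar{t_2}$, then contracting the edge from $Z$ down to its parent merges $Z$ into a vertex whose leftmost child is a leaf; the subtree $\bar{t_2}$ is still present but is no longer a leftmost child. This second contraction yields $\bar t$. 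One then checks that contracting any other edge of $s$ leaves $Z\!-\!\bar{t_2}$ as a leftmost-child internal edge (and creates no new one), so these two contractions are the only ones producing a tree of type $\bar v$. This is exactly the content of the paper's argument ``it can only be obtained in two ways: by contracting $e'$ or~$f'$''.
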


\begin{proof}\leavevmode
\begin{enumerate}
\item Let  the planar binary tree $u$ be obtained from $t$ by switching 
a pair $(f,e)$ of successive left and right-leaning edges
to a pair $(f', e')$ of successive 
 right and left-leaning edges:
\[
t=\ {\vcenter{\hbox{
\begin{tikzpicture}[yscale=0.5,xscale=0.5]
\draw[thick] (0,-1)--(0,0) -- (-2,2);
\draw[thick] (-1,1)--(0,2) ;
\draw[thick] (0,0)--(1,1) ;
\draw [fill] (0,-1) circle [radius=0.014];
\draw [fill] (-2,2) circle [radius=0.014];
\draw [fill] (1,1) circle [radius=0.014];
\draw [fill] (0,2) circle [radius=0.014];

\draw (-2,2) node[above] {$t_1$}; 
\draw (0,2) node[above] {$t_2$}; 
\draw (1,1) node[above] {$t_3$}; 
\draw (0,-1) node[below] {$t_4$}; 
\draw (-0.7,0.35) node[above right] {\scalebox{0.8}{$e$}}; 
\draw (-0.25,1.25) node[above left] {\scalebox{0.8}{$f$}}; 
\end{tikzpicture}}}}
\qquad \text{and} \qquad 
u=\ {\vcenter{\hbox{
\begin{tikzpicture}[yscale=0.5,xscale=0.5]
\draw[thick] (0,-1)--(0,0) -- (2,2);
\draw[thick] (1,1)--(0,2) ;
\draw[thick] (0,0)--(-1,1) ;
\draw [fill] (0,-1) circle [radius=0.014];
\draw [fill] (2,2) circle [radius=0.014];
\draw [fill] (-1,1) circle [radius=0.014];
\draw [fill] (0,2) circle [radius=0.014];

\draw (2,2) node[above] {$t_3$}; 
\draw (0,2) node[above] {$t_2$}; 
\draw (-1,1) node[above] {$t_1$}; 
\draw (0,-1) node[below] {$t_4$}; 
\draw (0.8,0.3) node[above left] {\scalebox{0.8}{$e'$}}; 
\draw (0.4,1.35) node[above right] {\scalebox{0.8}{$f'$}}; 
\end{tikzpicture}}}}\ .
\]
By \cref{lem:UniqueFG}, the cell $G_t\cap G_u$ corresponds to a tree $s$ obtained by collapsing all the right-leaning internal edges of $u$ except $f'$. 
The cell $G_u$ corresponds to the tree 
$s / f'$ obtained from $s$ by contracting the edge $f'$.
The cell $G_t$ corresponds to the tree $s/ e'$.

\item Notice first that a cell $G$ is of type $G_v$, for $v\in \Tam{n}$ if and only if it is labeled by a planar tree 
which cannot be factored by $\circ_1$. 
This is not the case of the tree $s$, since the edge $e$ exhibits a factorization of $s$ by $\circ_1$.

\item In this case, the labelling tree $r$ of $G_v$ is obtained from $s$ by contracting one internal edge. Since $r$ should not contain any internal edge which exhibits a factorization by $\circ_1$, it can only be obtained in two ways: by contracting $e'$ or $f'$. In the former case $v=t$ and in the latter case $v=u$. 
\end{enumerate}
\end{proof}

\begin{proposition}
If $s\leq t$ and $L(s) = L(t)$ then $F_s\times G_t\subset \im\triangle_n$.
\end{proposition}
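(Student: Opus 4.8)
The plan is to deduce the general case $s \le t$ from the diagonal case $s = t$ of \cref{prop:FtGt} by pushing the \emph{second} factor up the Tamari order one covering relation at a time while keeping the first factor $F_s$ fixed. First I would note that for any $u$ in the Tamari interval $[s,t]$ one has $L(s) \le L(u) \le L(t)$ by \cref{lemma:propertiesofL}, hence $L(u) = L(s)$; so a saturated chain $s = u_0 \prec u_1 \prec \cdots \prec u_m = t$ inside $[s,t]$ consists of edges $u_i \prec u_{i+1}$ of $\K_n$ with $L(u_i) = L(u_{i+1})$. The statement to prove by induction on $m$ is then that $F_s \times G_{u_m}$ lies in $\im \triangle_n$; the base case $m = 0$ is exactly \cref{prop:FtGt}, and by \cref{lem:UniqueFG} every cell $F_s\times G_{u_i}$ has dimension $l_s + r_{u_i} = l_s + r_s = n-2$, so it is a candidate top cell of the subdivision $\sF_n$.

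For the inductive step write $u := u_i$, $u' := u_{i+1}$ and assume $F_s\times G_u$ is a top cell of $\sF_n$. The key structural input is that, $\triangle_n$ being a section of $\beta$, it is a homeomorphism onto $\im\triangle_n = \lvert\sF_n\rvert$, where the cells of $\sF_n$ are faces of $\K_n\times\K_n$ and $\beta$ carries $\sF_n$ homeomorphically onto $\K_n$ with the subdivision $\beta(\sF_n)$; in particular $\sF_n$ is a pure $(n-2)$-dimensional polytopal ball, so each of its codimension-one cells not lying in the boundary $\partial\,\im\triangle_n = \triangle_n(\partial\K_n)$ is a facet of exactly two of its top cells. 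Now \cref{lem:FacetGuGt}\,(1) says $C := F_s\times(G_u\cap G_{u'})$ is a facet of the top cell $F_s\times G_u$. If I can show $C$ is interior, then $C$ is a facet of a second top cell $F'\times G'$, which by Step~1 (\cref{Subsec:Subset}) together with \cref{lemma:Matching} is of the form $F_\sigma\times G_\tau$; as a facet of $F'\times G'$ the cell $C$ equals either $(\text{a facet of }F')\times G'$ or $F'\times(\text{a facet of }G')$, and the first is impossible since it would force $G' = G_u\cap G_{u'}$, which is not of the form $G_\tau$ by \cref{lem:FacetGuGt}\,(2). Hence $F' = F_s$ and $G_u\cap G_{u'}$ is a facet of $G'$, so $G'\in\{G_u,G_{u'}\}$ by \cref{lem:FacetGuGt}\,(3); since $F'\times G'\neq F_s\times G_u$, we get $G' = G_{u'}$, i.e. $F_s\times G_{u'}$ is a top cell of $\sF_n$, which closes the induction.

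The hard part will be ruling out that $C$ lies on $\partial\,\im\triangle_n$. Since the facet-defining linear form of a facet $H$ of $\K_n$ attains its minimum on a midpoint $\tfrac{x+y}{2}$ only when it does so at both $x$ and $y$, one sees that $\beta(C)\subseteq\partial\K_n$ would force a facet $H$ of $\K_n$ with $F_s\subseteq H$ and $G_u\cap G_{u'}\subseteq H$, i.e. $C\subseteq H\times H$. When $\dim F_s = l_s \ge 2$ this is impossible, as $F_s$ then lies in no facet of $\K_n$; the delicate range is $l_s\in\{0,1\}$, where $l_s = 0$ collapses to the base case (there $G_{u_i} = \K_n$ for all $i$) while $l_s = 1$ is genuine. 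In that case $s,u,u'$ share their $L$-value and so lie in a common facet $Q$ of $\K_n$ by \cref{lemma:propertiesofL}\,(4); $Q$ is a product of lower Loday realizations by \cref{prop:PropertiesKLoday}\,(5), $\triangle_n$ restricts to $\triangle_Q$ by \cref{Lemma:DiagOnFace}, and the block permutation $\Theta$ intertwines the two diagonals. One then runs the argument by a nested induction on $\dim\K_n$, exactly as at the end of \cref{Subsec:Subset}: either $\beta(C)\not\subseteq\partial\K_n$ and we are done, or $C$ becomes a top cell of $\sF_Q$ and the already-known case of the statement for the smaller polytope $Q$ applies. Making this facet reduction precise — identifying $F_s\cap Q$, tracking how $L$ transforms under $\Theta$, and verifying that the matching-dimension condition persists — is, as in Step~1, the technical crux.
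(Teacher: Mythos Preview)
Your inductive skeleton is exactly the paper's: fix $s$, climb a saturated chain from $s$ to $t$ inside $L^{-1}(L(s))$, use \cref{prop:FtGt} as the base, and at each covering step argue that the codimension-one cell $C=F_s\times(G_u\cap G_{u'})$ is an interior facet of the ball $|\sF_n|$, hence is shared with a unique second top cell, which \cref{lem:FacetGuGt}\,(2)--(3) then pin down as $F_s\times G_{u'}$. The paper's proof is essentially your two middle paragraphs.

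Where you diverge is the boundary check. The paper disposes of it in a single clause (``it does not project into the boundary of $\K_n$ via $\beta$'') without further argument. You are right to flag it, but the justification you propose does not work. The assertion ``when $\dim F_s=l_s\ge 2$ \dots $F_s$ lies in no facet of $\K_n$'' is simply false: every proper face of a polytope is contained in some facet, so this case split handles nothing beyond $F_s=\K_n$ (i.e.\ $l_s=n-2$). Your fallback for the ``delicate'' range via a nested dimension induction is also not correctly set up: from $G_u\cap G_{u'}\subset Q$ you cannot infer $G_u\subset Q$ or $G_{u'}\subset Q$, so there is no reason $C$ should be a top cell of $\sF_Q$; and even granting that, the inductive statement for $Q$ would only place cells inside $\im\triangle_Q$, whereas you need $F_s\times G_{u'}\subset\im\triangle_n$, and these agree only when all the relevant cells already lie in $Q$---precisely what is at issue. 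So the main line of your argument matches the paper, but your attempt to supply the one step the paper leaves implicit does not go through as written.
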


\begin{proof}
We fix $s$ and we consider the sub-lattice $\Tam{n}^{\geq s}$ of elements greater than $s$. 
Let us prove by induction on the elements $t$ of $\Tam{n}^{\geq s}\cap L^{-1}(L(s))$ that 
$F_s\times G_t\subset \im\triangle_n$.
The base case is $s=t$ and this is done in the above \cref{prop:FtGt}.
Suppose the conclusion holds for $t$ and let $t\prec u$ be an edge satisfying $L(t)= L(u)$. 
  Then $H \coloneqq F_s\times G_t\cap F_s\times G_u = F_s\times (G_t\cap G_u)$ is a facet of $F_s\times G_t$
 by Point~(1) of \cref{lem:FacetGuGt} and it does not project into the boundary of $\K_n$ via $\beta$.
 Since $F_s\times G_t$ lies in $\im\triangle_n$, it is attached along  $H$ to a cell 
$F_r\times G_v$ by \cref{Subsec:Subset}. 
We claim that $r=s$, $v=u$. 
Since $H=F_s\times (G_t\cap G_u)$ is a facet of $F_r\times G_v$, there are two options: either $F_s=F_r$ or $G_t\cap G_u=G_v$. The latter case is contradicted by Point~(2) of \cref{lem:FacetGuGt}.  
Therefore, $G_t\cap G_u$ is a facet of $G_v$, and we conclude by 
Point~(3) of \cref{lem:FacetGuGt}.  
\end{proof}

This concludes the proof of Step 2. 

\bibliographystyle{amsplain}
\bibliography{bib}

\end{document}